\documentclass[11pt,a4paper]{article}

\usepackage[T1]{fontenc}
\usepackage[utf8]{inputenc}
\usepackage[english]{babel}
\usepackage{lmodern}
\usepackage{microtype}
\usepackage[margin=28mm]{geometry}

\usepackage{amsmath,amssymb,amsthm,mathtools,mathrsfs}
\usepackage{enumitem}
\usepackage[numbers,sort&compress]{natbib}
\usepackage{xcolor}

\usepackage[
  colorlinks=true,
  linkcolor=blue!60!black,
  citecolor=blue!60!black,
  urlcolor=blue!60!black
]{hyperref}

\hypersetup{
  pdftitle={
    Local Approximation Defects for Caputo Differentiation
  },
  pdfsubject={
    Linear dynamics and fractional differentiation
  }
}

\numberwithin{equation}{section}

\newtheorem{theorem}{Theorem}[section]
\newtheorem{proposition}[theorem]{Proposition}
\newtheorem{lemma}[theorem]{Lemma}
\newtheorem{corollary}[theorem]{Corollary}

\theoremstyle{definition}
\newtheorem{definition}[theorem]{Definition}

\theoremstyle{remark}

\newtheorem{example}[theorem]{Example}

\newcommand{\Nzero}{\mathbb N_0}
\newcommand{\C}{\mathbb C}

\newcommand{\X}{X_{\alpha,\rho}^{p}}
\newcommand{\F}{\mathcal F_\alpha}
\newcommand{\CD}{\mathcal C_\alpha}
\newcommand{\Pol}{\mathscr P_\alpha}

\newcommand{\norm}[1]{\left\lVert#1\right\rVert}

\DeclareMathOperator{\spanop}{span}
\DeclareMathOperator{\Per}{Per}
\DeclareMathOperator{\dist}{dist}
\DeclareMathOperator{\Orb}{Orb}

\setlist[enumerate]{
  label=\textup{(\roman*)},
  leftmargin=*
}

\title{Caputo Dynamics: Optimal Approximation and Chaos}

\author{El-Mahdi Nafia, Aziz El Ghazouani, M'hamed El Omari}

\begin{document}

\maketitle

\begin{abstract}
We study bounded realizations of Caputo differentiation
on coefficient Banach spaces of fractional power series.
The main contribution is an explicit quantitative analysis
of local approximation and periodic reconstruction.
We determine the least possible simultaneous error
in the initial displacement and the prescribed target.
The optimal value is attained, and a minimizing
perturbation is given explicitly.
We also solve the exact-arrival problem and establish
uniqueness of its least-norm solution.
These formulas provide computable approximation times
and identify the sharp threshold for hypercyclicity,
mixing and Devaney chaos.
Every periodic point is described by a finite block
whose geometric repetition yields exact norm formulas.
The resulting approximations attain any prescribed
sufficiently large minimal period near any given vector.
For distinct positive powers, a simultaneous block
construction gives quantitative error estimates
and the same sharp threshold for disjoint mixing.
Compact embeddings relate the Banach realizations
to the Mittag--Leffler--Caputo Fr\'echet space.
They also exhibit dense stable invariant subspaces
inside chaotic realizations.
The local approximation condition is a norm formulation
of classical transitivity. Its quantitative evaluation,
together with explicit periodic and simultaneous
reconstruction, provides the central results of the study.
\end{abstract}
\noindent
\textbf{Keywords.}
Caputo derivative; hypercyclicity; Devaney chaos;
backward shift; periodic approximation;
Mittag--Leffler functions.

\medskip

\noindent
\textbf{Mathematics Subject Classification (2020).}
47A16, 26A33, 47B37, 33E12.

% ============================================================
\section{Introduction}
\label{sec:introduction}
% ============================================================

Caputo differentiation becomes a dynamical operator
only after its domain and topology have been specified.
The initial endpoint also requires attention.
For instance, fractional powers are absolutely
continuous on compact intervals, although their
ordinary derivatives may be singular at zero.
The standard analytic framework is developed in
\cite{Podlubny,Samko,Gorenflo}.
Coefficient representations provide useful spaces
for studying these operators.
The reproducing-kernel constructions of Rosenfeld
and Dixon \cite{RD} and Rosenfeld, Russo and Dixon
\cite{RRD} are relevant examples.

Backward shifts supply a basic model for dense-orbit
phenomena on Banach spaces.
Rolewicz \cite{Rolewicz} established hypercyclicity
for suitable scalar multiples of the unilateral
backward shift.
Later results characterized the dynamics of weighted
shifts through their weights and underlying sequence
spaces \cite{Salas,GEWeightedShifts}.
Kitai's criterion \cite{Kitai} and the Hypercyclicity
Criterion of Gethner and Shapiro \cite{GS} give
widely used sufficient conditions;
see \cite{BM,GEP} for systematic treatments.
B\`es and Peris \cite{BP} proved that the latter
criterion is equivalent to hypercyclicity of the
direct sum of two copies of the operator.
The construction of de la Rosa and Read
\cite{DeLaRosaRead} shows that this property is
stronger than hypercyclicity alone.

Mixing and Kitai's criterion must likewise be
distinguished.
Grivaux \cite{Grivaux} constructed mixing operators
that fail this criterion.
Gomes and Grosse-Erdmann \cite{GG} characterized
Kitai's criterion for invertible composition operators
in a measurable-function setting and obtained
a mixing composition operator outside its scope.
For the Caputo realizations considered here,
a shift conjugacy already determines the qualitative
dynamics.
The remaining question is quantitative:
how small can an initial perturbation be when
a target and an iteration time are prescribed?

The closest fractional antecedents concern continuous
realizations and discrete approximation schemes.
Lizama, Murillo-Arcila and Peris \cite{LMP}
studied chaotic nonlocal operators associated with
time-stepping methods.
Their approach relates the discrete operators
to Toeplitz operators on sequence spaces.
Murillo-Arcila, Peris and Vargas-Moreno \cite{MPV}
established Devaney chaos for complex Caputo
differentiation on a Mittag--Leffler--Caputo
Fr\'echet space.
They also proved disjoint mixing for powers of
nonzero scalar multiples and investigated
the L1 discretization.
Their continuous realization provides the principal
point of comparison for this paper.

We study fractional power series equipped with
a coefficient Banach norm.
For
\[
0<\alpha<1,\qquad
\rho>0,\qquad
1\le p<\infty,
\]
the space $X_{\alpha,\rho}^{p}$ is defined through
the normalized functions
\[
e_n(t)
=
\frac{\rho^n t^{\alpha n}}
{\Gamma(\alpha n+1)},
\qquad n\ge0.
\]
The coefficient map
\[
J_{\alpha,\rho}:
\ell^p(\mathbb N_0)\longrightarrow X_{\alpha,\rho}^{p}
\]
is an isometric isomorphism.
We justify the Caputo integral for every element
of this space and define its endpoint value
by continuous extension.
The resulting bounded operator satisfies
\[
J_{\alpha,\rho}^{-1}\mathcal C_\alpha J_{\alpha,\rho}
=
\rho B,
\]
where $B$ is the unilateral backward shift.
This representation fixes the norm in which
approximation costs can be calculated.

Our main results solve two optimization problems
at a prescribed time.
For a bounded operator $A$ on a Banach space $X$,
define
\[
\Delta_{A,n}(f,g)
=
\inf_{h\in X}
\max\left\{
\norm{h},
\norm{A^n(f+h)-g}
\right\},
\qquad n\ge0.
\]
The two terms measure the initial displacement
and the terminal error.
On a separable infinite-dimensional Banach space,
the condition
\[
\inf_{n\ge0}\Delta_{A,n}(f,g)=0
\qquad(f,g\in X)
\]
is equivalent to hypercyclicity.
This is Birkhoff transitivity expressed through
norm balls \cite{BM,GEP}.
For our realizations, the fixed-time defect
has an explicit attained value.

Let $A=\lambda\mathcal C_\alpha$, where $\lambda\ne0$.
We prove
\begin{equation}
\label{eq:intro-local-cost}
\Delta_{A,n}(f,g)
=
\frac{\norm{g-A^nf}_{X_{\alpha,\rho}^{p}}}
{1+(|\lambda|\rho)^n}.
\end{equation}
An explicit perturbation attains this value
with equal initial and terminal errors.
More generally, for $\varepsilon,\eta\ge0$,
the constraints
\[
\norm{h}_{X_{\alpha,\rho}^{p}}\le\varepsilon,
\qquad
\norm{A^n(f+h)-g}_{X_{\alpha,\rho}^{p}}\le\eta
\]
are feasible precisely when
\[
\norm{g-A^nf}_{X_{\alpha,\rho}^{p}}
\le
(|\lambda|\rho)^n\varepsilon+\eta.
\]
When the terminal error must vanish, we obtain
\begin{equation}
\label{eq:intro-exact-cost}
\min_{A^n(f+h)=g}
\norm{h}_{X_{\alpha,\rho}^{p}}
=
(|\lambda|\rho)^{-n}
\norm{g-A^nf}_{X_{\alpha,\rho}^{p}}.
\end{equation}
The minimizing exact perturbation is unique
and is given explicitly.

These formulas complement the Caputo dynamics
established in \cite{MPV}.
The Fr\'echet topology used there describes
approximation through neighbourhoods determined
by finitely many graph norms.
In the present setting,
\eqref{eq:intro-local-cost}--\eqref{eq:intro-exact-cost}
determine optimal costs in one specified Banach norm.
Hypercyclicity, mixing and Devaney chaos occur
exactly when
\[
|\lambda|\rho>1.
\]
The threshold follows from the shift conjugacy
and records the dependence on the chosen realization.
Compact dense embeddings connect these Banach spaces
to the Mittag--Leffler--Caputo Fr\'echet space.
They also recover its corresponding dynamical
properties for nonzero scalar multiples
within the range $0<\alpha<1$ studied here.

Above this threshold, periodic points can be
reconstructed from finite coefficient blocks.
Their geometric repetition gives exact norm formulas
and bounded finite-rank projections onto
the periodic subspaces.
These projections converge strongly to the identity
with explicit approximation estimates.
We also show that every nonempty open set contains
points of every sufficiently large minimal period.
Mittag--Leffler expansions express the periodic blocks
as finite combinations of eigenvectors,
linking this reconstruction to the eigenvector
methods of \cite{GodefroyShapiro,MPV}.

For distinct positive powers, a separated-block
construction controls several targets simultaneously.
The resulting estimates involve coefficient tails,
geometric decay and the gaps between the powers.
They yield the same sharp threshold for disjoint
hypercyclicity and disjoint mixing.
Relative to the theory developed in
\cite{BPeris,BMPS}, the construction makes
the simultaneous approximation errors explicit.

Recent work provides further structural context.
Bernardes and Peris \cite{BernardesPeris2024}
relate shadowing, chain recurrence and mixing,
with equivalences under finite shadowing assumptions.
Adhikary and Nagar \cite{AN2026} formulate
transitivity criteria using Furstenberg families
and obtain characterizations of mixing.
Cobollo, Ernst, Menet and Peris \cite{CEMP2026}
introduce a targeting property for linearizations
and apply it to certain Lipschitz-free operators.
These approaches address conditions ensuring
dynamical properties in their respective settings.
Here the prescribed-time problem admits
optimal values and explicit minimizing perturbations.

For $p=2$, the least-norm exact perturbation
is given by the classical Moore--Penrose inverse
\cite{MalikThome}.
Our coefficient proof applies to every
$1\le p<\infty$, including $p=1$,
and establishes uniqueness through disjoint supports.
An elementary right-inverse argument determines
the balanced approximation cost.
The contribution combines these optimization formulas
with periodic reconstruction, simultaneous approximation
and quantitative embeddings for the bounded
Caputo realizations.

The paper is organized as follows.
Section~\ref{sec:preliminaries} collects the dynamical
and fractional-calculus notions.
Section~\ref{sec:caputo-realization} constructs
the coefficient spaces and the bounded
Caputo operator.
Section~\ref{sec:local-defects} solves the balanced
and exact-arrival problems and describes
the spectral data.
Section~\ref{sec:periodic-chaos} treats periodic
reconstruction, prescribed large minimal periods
and the sharp dynamical classification.
Section~\ref{sec:disjoint-powers} develops simultaneous
approximation for distinct powers.
Section~\ref{sec:banach-scales} determines optimal
finite-rank approximation errors for the Banach
inclusions and compares these realizations
with the Fr\'echet space.
Section~\ref{sec:examples} gives explicit costs,
approximation times and fractional cycles.
Section~\ref{sec:conclusion} states the conclusions
and possible extensions.
% ============================================================
\section{Preliminaries}
\label{sec:preliminaries}

Throughout, $X$ denotes a separable infinite-dimensional
complex Banach space, $\mathcal L(X)$ its bounded linear
operators, and $\Nzero=\{0,1,2,\ldots\}$.

% ============================================================
\subsection{Linear dynamics}
% ============================================================

\begin{definition}\cite{BM,GEP}
\label{def:hypercyclic}
For $A\in\mathcal L(X)$ and $x\in X$, the orbit of $x$
under $A$ is
\[
\Orb(x,A)=\{A^nx:n\in\Nzero\}.
\]
The vector $x$ is hypercyclic for $A$ if
\[
\overline{\Orb(x,A)}=X.
\]
The operator $A$ is hypercyclic if it admits such a vector.
We denote its set of hypercyclic vectors by $HC(A)$.
\end{definition}

\begin{definition}\cite{BM,GEP}
\label{def:mixing}
An operator $A\in\mathcal L(X)$ is topologically transitive
if, for every pair of nonempty open sets $U,V\subset X$,
there exists $n\in\Nzero$ such that
\[
A^n(U)\cap V\ne\varnothing.
\]
It is topologically mixing if, for every such pair,
there exists $n_0\in\Nzero$ such that
\[
A^n(U)\cap V\ne\varnothing
\qquad(n\ge n_0).
\]
It is weakly mixing if $A\oplus A$ is topologically
transitive on $X\times X$.
\end{definition}

\begin{theorem}\cite{BM,GEP}
\label{thm:birkhoff}
For $A\in\mathcal L(X)$, hypercyclicity and topological
transitivity are equivalent. When these properties hold,
$HC(A)$ is a dense $G_\delta$ subset of $X$.
\end{theorem}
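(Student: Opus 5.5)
The plan is the classical Birkhoff transitivity argument, run in both directions on a separable complete metric space. Throughout we use that $X$ is separable, infinite-dimensional and complete, so the Baire category theorem is available. Fix a countable dense set $\{y_k\}_{k\ge1}\subset X$ and let $V_{k,m}$ be the open ball of radius $1/m$ about $y_k$. These balls form a countable base $\{V_j\}_{j\ge1}$ for the topology of $X$.

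\emph{Transitive $\Rightarrow$ hypercyclic, with $HC(A)$ dense $G_\delta$.} A vector $x$ has dense orbit exactly when its orbit meets every $V_j$. Hence
\[
HC(A)=\bigcap_{j\ge1}\bigcup_{n\in\Nzero}A^{-n}(V_j).
\]
Each $A^{-n}(V_j)$ is open because $A^n$ is continuous, so each union $G_j=\bigcup_n A^{-n}(V_j)$ is open. Transitivity gives density of $G_j$: for any nonempty open $U$ there is $n$ with $A^n(U)\cap V_j\ne\varnothing$, which means $U\cap A^{-n}(V_j)\ne\varnothing$. By Baire's theorem, $HC(A)$ is then a dense $G_\delta$, and in particular it is nonempty. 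The same identity shows that $HC(A)$ is always a $G_\delta$, and that it is dense whenever transitivity holds. This settles the second sentence of the theorem once the equivalence is proved.

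\emph{Hypercyclic $\Rightarrow$ transitive.} This is the step needing care. Let $x\in HC(A)$ and let $U,V$ be nonempty open sets. Density of the orbit gives some $m$ with $A^mx\in U$. It then suffices to find $n$ with $A^{m+n}x\in V$, since then $A^n(A^mx)\in A^n(U)\cap V$.

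The key point is that every orbit point $A^mx$ is again hypercyclic. Its orbit is $\Orb(x,A)$ with the finitely many points $x,\dots,A^{m-1}x$ removed. In a space with no isolated points, removing a finite set from a dense set leaves it dense. This holds because $X$ is a nontrivial normed space, so every nonempty open set is infinite. I expect this to be the main obstacle: one must exclude the possibility that a dense orbit is dense only because of finitely many of its points. Infinite-dimensionality, or simply $X\ne\{0\}$, handles this.

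With that in hand, the orbit of $A^mx$ meets $V$, giving the required $n$. This completes the equivalence, and together with the first part it proves the theorem.
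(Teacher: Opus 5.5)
Your proposal is correct and is essentially the standard Birkhoff transitivity argument from the references the paper cites (the paper states this result without proof): the Baire category identity $HC(A)=\bigcap_j\bigcup_n A^{-n}(V_j)$ gives one direction, and the observation that every orbit point of a hypercyclic vector is again hypercyclic, because $X$ has no isolated points, gives the other. One small precision: the orbit of $A^mx$ contains $\Orb(x,A)\setminus\{x,\ldots,A^{m-1}x\}$ rather than necessarily equalling it, and this inclusion is all your argument needs.
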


\begin{definition}\cite{Banks,GEP}
\label{def:chaos}
A vector $x\in X$ is periodic for $A\in\mathcal L(X)$
if $A^mx=x$ for some integer $m\ge1$. Its minimal period
is the least such integer. The set of periodic points is
\[
\Per(A)=\bigcup_{m\ge1}\ker(A^m-I).
\]

The operator $A$ is sensitive if there exists $\delta>0$
such that, for every $x\in X$ and $\varepsilon>0$,
one can find $y\in X$ and $n\in\Nzero$ satisfying
\[
\norm{x-y}<\varepsilon,
\qquad
\norm{A^nx-A^ny}>\delta.
\]

The operator $A$ is Devaney chaotic if it is topologically
transitive, sensitive, and has dense periodic points.
\end{definition}

\begin{proposition}\cite{Banks,GEP}
\label{prop:chaos-characterization}
An operator $A\in\mathcal L(X)$ is Devaney chaotic
if and only if it is hypercyclic and
\[
\overline{\Per(A)}=X.
\]
\end{proposition}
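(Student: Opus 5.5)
We have to prove that Devaney chaos, meaning transitivity together with sensitivity and density of periodic points, is equivalent to hypercyclicity together with density of $\Per(A)$. By Theorem~\ref{thm:birkhoff}, hypercyclicity and topological transitivity are equivalent on $X$. Once that substitution is made, the forward implication is immediate: a Devaney chaotic operator is transitive, hence hypercyclic, and its periodic points are dense by definition. So the whole task is the converse. Assuming $A$ is hypercyclic with $\overline{\Per(A)}=X$, transitivity again comes from Theorem~\ref{thm:birkhoff}, and what remains is to show that $A$ is sensitive.

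To get sensitivity we will show that it follows from hypercyclicity alone, using linearity. Fix a hypercyclic vector $u$, which exists by assumption. The plan is to take $\delta=1$.

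1. Given $x\in X$ and $\varepsilon>0$, choose a nonzero scalar $c$ with $\norm{cu}<\varepsilon$.
2. The vector $cu$ is also hypercyclic, because $\Orb(cu,A)=c\,\Orb(u,A)$ and multiplication by $c\neq 0$ is a homeomorphism of $X$.
3. Its orbit is dense, so it is unbounded, and there is some $n\in\Nzero$ with $\norm{A^n(cu)}>1$.
4. Set $y=x+cu$. Then $\norm{x-y}=\norm{cu}<\varepsilon$, and by linearity $\norm{A^nx-A^ny}=\norm{A^n(cu)}>1=\delta$.

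This is exactly the sensitivity condition in Definition~\ref{def:chaos}.

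The one point to check is the unboundedness claim in step~3. A dense subset of a nonzero normed space cannot be bounded, since it must meet every open ball, including balls far from the origin. This holds here because $X$ is infinite-dimensional, so in particular $X\neq\{0\}$. Nothing else is delicate. The classical Banks et al.\ argument, that transitivity plus dense periodic points implies sensitivity, is not needed, because in the linear setting hypercyclicity alone already yields sensitivity.
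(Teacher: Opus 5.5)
Your proof is correct, but it takes a different route from the one the paper relies on. The paper gives no argument of its own here and simply cites \cite{Banks,GEP}. When it needs the same implication later, in the proof of Corollary~\ref{cor:transfer}, it appeals to the theorem of Banks et al.: a transitive continuous map with dense periodic points on an infinite metric space is automatically sensitive. That argument is purely topological, and the periodic orbits are essential to it, since two periodic orbits at positive distance produce the sensitivity constant. It therefore works for nonlinear maps and on the Fr\'echet space $\F$, whose topology is given by a metric rather than a norm.

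You instead use linearity. You perturb $x$ by a small multiple $cu$ of a hypercyclic vector, whose orbit is still dense and hence unbounded. This gives sensitivity with the explicit constant $\delta=1$ (indeed any $\delta>0$) from hypercyclicity alone. In your version the density of $\Per(A)$ is never used to obtain sensitivity; it enters only as one of the Devaney conditions, and you in effect show that sensitivity is redundant for operators. This is shorter and self-contained. It is also the same mechanism the paper uses concretely in Proposition~\ref{prop:sensitive}, with $\frac{\varepsilon}{2}e_n$ in place of $cu$.

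The cost is that your argument is tied to the linear structure and, as written, to an unbounded norm. On a Fr\'echet space with a bounded invariant metric you would replace unboundedness of the orbit by the existence of orbit points at distance greater than some fixed $\delta>0$ from the origin. Density still supplies such points, so this change is harmless.
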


\begin{theorem}\cite{Kitai,GEP}
\label{thm:kitai}
Let $A\in\mathcal L(X)$. Suppose that there exist dense
sets $D_1,D_2\subset X$ and a map $S:D_2\to D_2$ such that
\[
A^nx\longrightarrow0
\qquad(x\in D_1),
\]
\[
S^ny\longrightarrow0
\qquad(y\in D_2),
\]
and
\[
ASy=y
\qquad(y\in D_2).
\]
Then $A$ is topologically mixing and therefore hypercyclic.
\end{theorem}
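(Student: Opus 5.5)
The plan is to prove mixing directly from Definition~\ref{def:mixing}, by building explicit points whose $n$-th image lands in a prescribed open set. Hypercyclicity will then follow from Theorem~\ref{thm:birkhoff}.

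Fix nonempty open sets $U,V\subset X$. Choose $x\in U\cap D_1$ and $y\in V\cap D_2$, using density of $D_1$ and $D_2$. Choose $\varepsilon>0$ with the balls $B(x,\varepsilon)\subset U$ and $B(y,\varepsilon)\subset V$. For each $n$ set
\[
z_n=x+S^ny.
\]
The main step is to show two things:
\[
\norm{z_n-x}=\norm{S^ny}\to0
\]
by the hypothesis on $S$, and
\[
A^nz_n=A^nx+A^nS^ny\to y .
\]

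For the second claim I need the identity $A^nS^ny=y$ for $y\in D_2$. I will prove it by induction on $n$. The point is that $S$ maps $D_2$ into $D_2$, so $S^{n-1}y\in D_2$, and therefore $AS(S^{n-1}y)=S^{n-1}y$. This gives
\[
A^nS^ny=A^{n-1}\bigl(AS\,S^{n-1}y\bigr)=A^{n-1}S^{n-1}y=\dots=y .
\]
This is the only place where care is needed. Since $S$ is not assumed linear or continuous, the argument must use nothing beyond the invariance of $D_2$ and the pointwise identity $ASy=y$; the induction uses exactly these. Combined with $A^nx\to0$, it yields $\norm{A^nz_n-y}=\norm{A^nx}\to0$.

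Finally, pick $n_0$ such that for all $n\ge n_0$ both $\norm{S^ny}<\varepsilon$ and $\norm{A^nx}<\varepsilon$. Then for every $n\ge n_0$ we have $z_n\in U$ and $A^nz_n\in V$, so $A^n(U)\cap V\ne\varnothing$. This is topological mixing. Mixing implies topological transitivity (take $n=n_0$), and Theorem~\ref{thm:birkhoff} then gives hypercyclicity. That step uses the standing assumption that $X$ is a separable Banach space, which holds throughout the paper.
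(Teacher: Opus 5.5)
Your proof is correct: the vectors $z_n=x+S^ny$ are the right choice. The induction $A^nS^ny=y$ (which uses only $S(D_2)\subset D_2$ and the pointwise identity $ASy=y$) and the appeal to Theorem~\ref{thm:birkhoff} make up the standard proof of Kitai's criterion. The paper itself gives no proof of this statement and quotes it from \cite{Kitai,GEP}, where the argument is essentially the one you wrote.
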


\begin{theorem}\cite{GS,BP,GEP}
\label{thm:hc-criterion}
Let $A\in\mathcal L(X)$. Suppose that there exist dense
sets $D_1,D_2\subset X$, a strictly increasing sequence
$(n_k)$ of positive integers, and maps $R_k:D_2\to X$
such that
\[
A^{n_k}x\longrightarrow0
\qquad(x\in D_1),
\]
and
\[
R_ky\longrightarrow0,
\qquad
A^{n_k}R_ky\longrightarrow y
\qquad(y\in D_2).
\]
Then $A$ is weakly mixing and hypercyclic.
If these conditions hold with $n_k=k$, then $A$ is
topologically mixing.
\end{theorem}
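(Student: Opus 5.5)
The statement to be proved is Theorem~\ref{thm:hc-criterion}, the Hypercyclicity Criterion. The plan is to prove weak mixing directly from the definition, that is, transitivity of $A\oplus A$ on $X\times X$. Hypercyclicity then follows from Theorem~\ref{thm:birkhoff}, since a transitive $A\oplus A$ forces $A$ to be transitive: project the open sets onto the first coordinate. The case $n_k=k$ is handled by the same construction read along all times.

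\textbf{Step 1: transitivity along $(n_k)$.} Let $U,V\subset X$ be nonempty open sets. By density, pick $x\in U\cap D_1$ and $y\in V\cap D_2$. Set $z_k=x+R_ky$. Then $z_k\to x$, because $R_ky\to0$. Also
\[
A^{n_k}z_k=A^{n_k}x+A^{n_k}R_ky\longrightarrow 0+y=y .
\]
Hence, for $k$ large, $z_k\in U$ and $A^{n_k}z_k\in V$, so $A^{n_k}(U)\cap V\neq\varnothing$. For pairs $U_1,U_2$ and $V_1,V_2$, choose $x_i\in U_i\cap D_1$ and $y_i\in V_i\cap D_2$, and apply the same construction with one common index $k$. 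All four convergences hold along the same sequence, so a single large $k$ works for both coordinates. This gives $(A\oplus A)^{n_k}(U_1\times U_2)\cap(V_1\times V_2)\neq\varnothing$. Since sets of the form $U_1\times U_2$ form a base of the product topology, $A\oplus A$ is topologically transitive, so $A$ is weakly mixing.

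\textbf{Step 2: the case $n_k=k$.} Here the convergences hold as $k\to\infty$ through all integers. The argument of Step~1 therefore yields some $k_0$ with $A^{k}(U)\cap V\neq\varnothing$ for every $k\ge k_0$, which is exactly topological mixing in the sense of Definition~\ref{def:mixing}.

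\textbf{Potential gaps.} The first is that the $R_k$ need not be linear. This does not matter, because only the pointwise limits at the fixed vectors $y$ or $y_i$ are used. The second is the passage from weak mixing to hypercyclicity of $A$. As noted, transitivity of $A$ itself is already contained in Step~1, and Theorem~\ref{thm:birkhoff} converts it to hypercyclicity on the separable space $X$. No step presents a serious obstacle. The only point needing care is to use one common index $k$ for both coordinates in the product argument.
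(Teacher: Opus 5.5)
Your argument is correct and is the standard proof of this criterion. You perturb $x\in D_1$ by $R_ky$ with $y\in D_2$, use one common index $k$ in both coordinates to obtain transitivity of $A\oplus A$, and read the same construction along all $k$ when $n_k=k$ to obtain mixing. The paper gives no proof of this background result and only cites \cite{GS,BP,GEP}, so there is no alternative route in the paper to compare against.
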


\begin{definition}\cite{BPeris,BMPS}
\label{def:disjoint}
Let $N\ge2$ and $A_1,\ldots,A_N\in\mathcal L(X)$.
The tuple $(A_1,\ldots,A_N)$ is disjoint hypercyclic
if there exists $x\in X$ such that
\[
\overline{
\left\{
(A_1^nx,\ldots,A_N^nx):n\in\Nzero
\right\}}
=X^N.
\]

The tuple is disjoint mixing if, for all nonempty open
sets $U_0,\ldots,U_N\subset X$, there exists
$n_0\in\Nzero$ such that
\[
U_0\cap\bigcap_{j=1}^{N}A_j^{-n}(U_j)
\ne\varnothing
\qquad(n\ge n_0).
\]
\end{definition}

% ============================================================
\subsection{Fractional calculus and Mittag--Leffler functions}
% ============================================================

\begin{definition}\cite{Podlubny,Samko}
\label{def:fractional-integral}
For $\gamma>0$, the left Riemann--Liouville fractional
integral is
\[
(I_{0+}^{\gamma}f)(t)
=
\frac{1}{\Gamma(\gamma)}
\int_0^t(t-s)^{\gamma-1}f(s)\,ds,
\]
whenever the integral is absolutely convergent.
\end{definition}

\begin{definition}\cite{Podlubny,Samko}
\label{def:caputo}
Let $0<\alpha<1$ and $f\in AC[0,R]$.
The Caputo derivative of order $\alpha$ is
\begin{equation}
\label{eq:caputo-definition}
({}^CD_{0+}^{\alpha}f)(t)
=
(I_{0+}^{1-\alpha}f')(t)
=
\frac{1}{\Gamma(1-\alpha)}
\int_0^t(t-s)^{-\alpha}f'(s)\,ds,
\end{equation}
at points $t\in(0,R]$ where the integral is absolutely
convergent.
\end{definition}

\begin{lemma}\cite{Podlubny,Samko}
\label{lem:powers}
For $\gamma>0$, $\beta\ge0$ and $t>0$,
\[
I_{0+}^{\gamma}t^\beta
=
\frac{\Gamma(\beta+1)}
{\Gamma(\beta+\gamma+1)}
t^{\beta+\gamma}.
\]
For $0<\alpha<1$ and $\beta>0$,
\[
{}^CD_{0+}^{\alpha}t^\beta
=
\frac{\Gamma(\beta+1)}
{\Gamma(\beta+1-\alpha)}
t^{\beta-\alpha},
\qquad
{}^CD_{0+}^{\alpha}1=0.
\]
\end{lemma}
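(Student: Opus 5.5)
We compute the fractional integral directly from Definition~\ref{def:fractional-integral} by the substitution $s=tu$ with $u\in[0,1]$, which turns it into a Beta integral. For $t>0$,
\[
I_{0+}^{\gamma}t^\beta
=\frac{1}{\Gamma(\gamma)}\int_0^t (t-s)^{\gamma-1}s^\beta\,ds
=\frac{t^{\beta+\gamma}}{\Gamma(\gamma)}\int_0^1 (1-u)^{\gamma-1}u^{\beta}\,du
=\frac{t^{\beta+\gamma}}{\Gamma(\gamma)}\,B(\beta+1,\gamma).
\]
The integrand is nonnegative. It is integrable near $u=0$ because $\beta\ge0$, and near $u=1$ because $\gamma>0$, so the integral converges absolutely, as the definition requires. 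The identity $B(\beta+1,\gamma)=\Gamma(\beta+1)\Gamma(\gamma)/\Gamma(\beta+\gamma+1)$ then gives the first formula.

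For the Caputo derivative we check the hypothesis of Definition~\ref{def:caputo}. When $\beta>0$, the function $f(t)=t^\beta$ is absolutely continuous on every $[0,R]$. It is continuous on $[0,R]$ and $C^1$ on $(0,R]$, and its derivative $f'(s)=\beta s^{\beta-1}$ is integrable near $0$ because $\beta-1>-1$. Hence $f$ is the integral of $f'$ from $0$, with $f(0)=0$. This step is the main subtlety: for $0<\beta<1$ the ordinary derivative is singular at $0$, so absolute continuity has to come from integrability of $f'$ and not from boundedness.

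Applying \eqref{eq:caputo-definition} gives $I_{0+}^{1-\alpha}$ applied to $\beta s^{\beta-1}$. The first part of the computation cannot be quoted verbatim here, since the exponent $\beta-1$ may lie in $(-1,0)$, outside the stated range $\beta\ge0$. The Beta-integral computation nonetheless goes through unchanged for any exponent greater than $-1$, because integrability at $u=0$ only needs the exponent to exceed $-1$. So we redo it with exponent $\beta-1$ and $\gamma=1-\alpha>0$:
\[
{}^CD_{0+}^{\alpha}t^\beta
=\beta\,\frac{\Gamma(\beta)}{\Gamma(\beta+1-\alpha)}t^{\beta-\alpha}
=\frac{\Gamma(\beta+1)}{\Gamma(\beta+1-\alpha)}t^{\beta-\alpha},
\]
using $\beta\Gamma(\beta)=\Gamma(\beta+1)$. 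The integral converges absolutely at every $t\in(0,R]$.

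Finally, the constant function $1$ is absolutely continuous with derivative $0$, so the Caputo integral vanishes identically. This gives ${}^CD_{0+}^{\alpha}1=0$.
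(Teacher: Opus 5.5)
Your proof is correct. It is the standard Beta-integral computation behind the cited result, and you correctly handle both the absolute continuity of $t^\beta$ for $0<\beta<1$ and the exponent $\beta-1\in(-1,0)$. The paper does not prove this lemma itself but quotes it from \cite{Podlubny,Samko}, and in the proof of Proposition~\ref{prop:caputo} it uses exactly this Beta integral, with exponents $\alpha n-1$ that may lie in $(-1,0)$.
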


\begin{proposition}\cite{Podlubny,Samko}
\label{prop:fractional-inversion}
Let $0<\alpha<1$ and $f\in AC[0,R]$.
Then
\[
I_{0+}^{\alpha}
\bigl({}^CD_{0+}^{\alpha}f\bigr)(t)
=
f(t)-f(0)
\]
for almost every $t\in(0,R)$.

If $g\in C^1[0,R]$, then
$I_{0+}^{\alpha}g\in AC[0,R]$ and
\[
{}^CD_{0+}^{\alpha}
\bigl(I_{0+}^{\alpha}g\bigr)(t)
=
g(t),
\qquad 0<t\le R.
\]
\end{proposition}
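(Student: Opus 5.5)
The statement has two parts, and I will prove them separately. The first is that $I_{0+}^{\alpha}\bigl({}^CD_{0+}^{\alpha}f\bigr)=f-f(0)$ a.e.\ for $f\in AC[0,R]$. The second is that ${}^CD_{0+}^{\alpha}(I_{0+}^{\alpha}g)=g$ on $(0,R]$ for $g\in C^1[0,R]$. Both rest on the semigroup law for Riemann--Liouville integrals on $L^1$,
\[
I_{0+}^{\gamma}I_{0+}^{\delta}\varphi=I_{0+}^{\gamma+\delta}\varphi
\qquad(\varphi\in L^1(0,R),\ \gamma,\delta>0),
\]
together with $I_{0+}^{1}\varphi(t)=\int_0^t\varphi$. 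I will prove the semigroup law first, by Fubini--Tonelli. Interchanging the order of integration turns the inner integral $\int_s^t(t-u)^{\gamma-1}(u-s)^{\delta-1}\,du$ into $B(\gamma,\delta)(t-s)^{\gamma+\delta-1}$, using the substitution $u=s+(t-s)\tau$. Absolute convergence holds for a.e.\ $t$, because Tonelli applied to $|\varphi|$ gives a finite $L^1$ integral of the resulting function.

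For the first identity, set $\varphi=f'\in L^1$. Then ${}^CD^{\alpha}f=I^{1-\alpha}f'$, which lies in $L^1$ by Young's inequality since the kernel $t^{-\alpha}/\Gamma(1-\alpha)$ is integrable. The semigroup law then gives
\[
I^{\alpha}I^{1-\alpha}f'=I^{1}f'=f-f(0),
\]
where the last equality uses absolute continuity. The equality holds a.e., and in fact everywhere once both sides are shown to be continuous, but a.e.\ is all that is claimed.

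For the second identity, with $g\in C^1$, I will first write $I^{\alpha}g$ explicitly. The substitution $s=t-u$ gives
\[
I^{\alpha}g(t)=\frac{1}{\Gamma(\alpha)}\int_0^t u^{\alpha-1}g(t-u)\,du .
\]
Differentiating under the integral (legitimate since $g'$ is bounded) yields
\[
(I^{\alpha}g)'(t)=\frac{g(0)\,t^{\alpha-1}}{\Gamma(\alpha)}+I^{\alpha}g'(t).
\]
This derivative is integrable on $[0,R]$. Moreover $I^{\alpha}g(t)\to0$ as $t\to0$, so $I^{\alpha}g(0)=0$. Integrating the derivative formula therefore confirms $I^{\alpha}g=\frac{g(0)t^{\alpha}}{\Gamma(\alpha+1)}+I^{1+\alpha}g'$, which is absolutely continuous. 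This is consistent with the semigroup law and with Lemma~\ref{lem:powers}.

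Now apply $I^{1-\alpha}$ to the derivative formula. Lemma~\ref{lem:powers}, with $\beta=\alpha-1$, requires $\beta\ge0$ as stated, which fails here. So I will compute this term directly from the Beta integral instead:
\[
I^{1-\alpha}\!\left[\frac{t^{\alpha-1}}{\Gamma(\alpha)}\right]=\frac{1}{\Gamma(\alpha)\Gamma(1-\alpha)}\int_0^t(t-s)^{-\alpha}s^{\alpha-1}\,ds=1 .
\]
For the other term, the semigroup law gives $I^{1-\alpha}I^{\alpha}g'=I^{1}g'=g-g(0)$. Adding the two contributions, ${}^CD^{\alpha}I^{\alpha}g=g(0)+g-g(0)=g$. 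This holds at every $t\in(0,R]$, not merely a.e., because $g'$ is continuous: for continuous $\varphi$ the Fubini computation is valid pointwise for each $t$ by Tonelli with the bound $|\varphi|\le\|\varphi\|_\infty$.

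The main obstacle is the pointwise (rather than a.e.) validity in the second identity, together with the absolute convergence of the Caputo integral at every $t$. I will handle it by noting that $(I^{\alpha}g)'$ is dominated by $C(t^{\alpha-1}+1)$. Hence $(t-s)^{-\alpha}|(I^{\alpha}g)'(s)|$ is integrable on $(0,t)$ for each $t>0$, again by the Beta integral, so the Caputo integral in \eqref{eq:caputo-definition} converges absolutely at every point of $(0,R]$.
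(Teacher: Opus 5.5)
The paper does not prove this proposition; it quotes it from \cite{Podlubny,Samko}. Your argument is correct and is essentially the standard proof from those sources: the Riemann--Liouville semigroup law via Fubini--Tonelli and the Beta integral, the decomposition $(I_{0+}^{\alpha}g)'=g(0)t^{\alpha-1}/\Gamma(\alpha)+I_{0+}^{\alpha}g'$, and a direct Beta-integral evaluation for the exponent $\alpha-1$ that Lemma~\ref{lem:powers} does not cover. One minor simplification: when the two orders sum to $1$, Tonelli gives absolute convergence of the iterated integral at every $t$, since it equals $\Gamma(\alpha)\Gamma(1-\alpha)\int_0^t|\varphi|$. So both identities hold at every $t\in(0,R]$ without invoking continuity of $g'$.
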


\begin{definition}\cite{Gorenflo}
\label{def:mittag}
For $\eta,\beta>0$, the two-parameter Mittag--Leffler
function is
\[
E_{\eta,\beta}(z)
=
\sum_{n=0}^{\infty}
\frac{z^n}{\Gamma(\eta n+\beta)},
\qquad z\in\C.
\]
We write
\[
E_\eta(z)=E_{\eta,1}(z).
\]
Both functions are entire.
\end{definition}% % ============================================================
\section{Coefficient spaces and the Caputo realization}
\label{sec:caputo-realization}
% ============================================================

Throughout this section,
\[
0<\alpha<1,
\qquad
\rho>0,
\qquad
1\le p<\infty.
\]
We first construct the coefficient space. We then identify
Caputo differentiation and fractional integration as bounded
operators on that space.

% ------------------------------------------------------------
\subsection{The coefficient space}
% ------------------------------------------------------------

\begin{definition}[Fractional coefficient space]
\label{def:space}
For $n\in\Nzero$, set
\begin{equation}
\label{eq:basis}
e_n(t)
=
\frac{\rho^n t^{\alpha n}}{\Gamma(\alpha n+1)},
\qquad t\ge0.
\end{equation}
Define
\begin{equation}
\label{eq:space}
\X
=
\left\{
f:[0,\infty)\to\C:
f(t)=\sum_{n=0}^{\infty}a_ne_n(t),
\quad
a=(a_n)_{n\ge0}\in\ell^p(\Nzero)
\right\}.
\end{equation}
The coefficient norm is
\begin{equation}
\label{eq:coefficient-norm}
\norm{f}_{\X}
=
\left(\sum_{n=0}^{\infty}|a_n|^p\right)^{1/p}.
\end{equation}
\end{definition}

The next result makes this definition well posed.

\begin{lemma}[Convergence, uniqueness and completeness]
\label{lem:space}
Every series in \eqref{eq:space} converges absolutely
and uniformly on compact intervals and has unique
coefficients. The map
\[
J_{\alpha,\rho}:\ell^p(\Nzero)\longrightarrow\X,
\qquad
J_{\alpha,\rho}a=\sum_{n=0}^{\infty}a_ne_n,
\]
is an isometric isomorphism.

Consequently, $\X$ is a separable infinite-dimensional
Banach space, $(e_n)_{n\ge0}$ is a normalized unconditional
Schauder basis, and
\[
\Pol
=
\spanop\{e_n:n\ge0\}
=
\spanop\{1,t^\alpha,t^{2\alpha},\ldots\}
\]
is dense in $\X$.
\end{lemma}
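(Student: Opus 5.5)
The proof reduces every claim to a sup-norm estimate for the basis functions $e_n$ on a compact interval $[0,R]$.

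\emph{Convergence.} Fix $R>0$ and $a\in\ell^p(\Nzero)$. For $t\in[0,R]$ we have $|a_ne_n(t)|\le |a_n|\,\rho^nR^{\alpha n}/\Gamma(\alpha n+1)$. The sequence $M_n=\rho^nR^{\alpha n}/\Gamma(\alpha n+1)$ is summable: its sum is the Mittag--Leffler value $E_\alpha(\rho R^\alpha)<\infty$ from Definition~\ref{def:mittag}. Since $a\in\ell^p\subset\ell^\infty$, we get $\sum_n|a_n|M_n\le\norm{a}_\infty E_\alpha(\rho R^\alpha)$. The Weierstrass M-test then gives absolute and uniform convergence on $[0,R]$. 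It also gives the continuity estimate
\[
\sup_{[0,R]}|J_{\alpha,\rho}a|\le E_\alpha(\rho R^\alpha)\norm{a}_{\ell^p}.
\]

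\emph{Uniqueness of coefficients.} By linearity, it suffices to show that $\sum_n a_ne_n(t)=0$ for all $t\ge0$ forces $a=0$. Substitute $s=t^\alpha$. The function becomes $F(s)=\sum_n a_n\rho^n s^n/\Gamma(\alpha n+1)$, a power series in $s$. This series converges for all $s\ge0$, and $|a_n|$ is bounded, so its radius of convergence is infinite: the root test applies because $\Gamma(\alpha n+1)^{1/n}\to\infty$. Thus $F$ is entire and vanishes on $[0,\infty)$, so all its Taylor coefficients vanish. Since $\rho^n/\Gamma(\alpha n+1)\ne0$, each $a_n=0$. This is the only step needing a genuine idea, namely the change of variable $s=t^\alpha$ that turns the series into an ordinary power series. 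Everything else is formal.

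\emph{The isometric isomorphism.} Uniqueness makes the norm \eqref{eq:coefficient-norm} well defined. By construction $J_{\alpha,\rho}$ is linear, surjective and isometric, hence bijective. Completeness, separability and infinite dimension of $\X$ are transported from $\ell^p(\Nzero)$ with $1\le p<\infty$. The unit vectors of $\ell^p$ form a normalized unconditional Schauder basis, and $J_{\alpha,\rho}$ maps them to $e_n$, so $(e_n)$ is such a basis of $\X$. Finitely supported sequences are dense in $\ell^p$, which gives density of $\Pol$. The two descriptions of the span coincide because $e_n$ is a nonzero multiple of $t^{\alpha n}$.
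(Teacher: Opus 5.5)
Your proof is correct and follows the paper's argument essentially step for step. Convergence comes from the Weierstrass test with the bound $\norm{a}_\infty E_\alpha(\rho R^\alpha)$, uniqueness from the entire power series in $s=t^\alpha$ together with the identity theorem, and completeness, separability and the basis properties are transported from $\ell^p$. The differences are cosmetic: you reduce uniqueness to the kernel by linearity instead of comparing two coefficient sequences, and you cite the growth of $\Gamma(\alpha n+1)^{1/n}$ where the paper invokes Stirling's formula explicitly.
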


\begin{proof}
For $R>0$, the estimate
$\norm{a}_\infty\le\norm{a}_p$ gives
\begin{equation}
\label{eq:evaluation-bound}
\sup_{0\le t\le R}
\sum_{n=0}^{\infty}|a_n|e_n(t)
\le
\norm{a}_p
\sum_{n=0}^{\infty}
\frac{\rho^nR^{\alpha n}}{\Gamma(\alpha n+1)}
=
\norm{a}_p E_\alpha(\rho R^\alpha).
\end{equation}
The Weierstrass test proves the asserted convergence.

To establish uniqueness, consider
\[
H_a(w)
=
\sum_{n=0}^{\infty}
\frac{a_n\rho^n}{\Gamma(\alpha n+1)}w^n.
\]
Stirling's formula yields
\[
\Gamma(\alpha n+1)^{1/n}
\sim
\left(\frac{\alpha n}{e}\right)^\alpha,
\]
and therefore
\[
\limsup_{n\to\infty}
\left|
\frac{a_n\rho^n}{\Gamma(\alpha n+1)}
\right|^{1/n}
=0.
\]
Thus $H_a$ is entire.

If two coefficient sequences $a,b\in\ell^p$ define
the same function, then
\[
H_a(t^\alpha)=H_b(t^\alpha)
\qquad(t\ge0).
\]
The identity theorem gives $H_a=H_b$, hence $a_n=b_n$
for every $n$.

The norm in \eqref{eq:coefficient-norm} is therefore
well defined, and $J_{\alpha,\rho}$ is a linear
isometric bijection. Completeness, separability and
the basis assertions follow from those of $\ell^p$.
\end{proof}

We record the local regularity needed for the Caputo integral.

\begin{lemma}[Absolute continuity]
\label{lem:absolute-continuity}
Let $f=J_{\alpha,\rho}a\in\X$.
For every $R>0$, one has $f\in AC[0,R]$ and
\begin{equation}
\label{eq:ordinary-derivative}
f'(t)
=
\sum_{n=1}^{\infty}
a_n
\frac{\rho^n t^{\alpha n-1}}{\Gamma(\alpha n)}
\quad\text{in }L^1(0,R).
\end{equation}
Moreover,
\begin{equation}
\label{eq:AC-bound}
\norm{f'}_{L^1(0,R)}
\le
\sum_{n=1}^{\infty}|a_n|e_n(R)
\le
\norm{f}_{\X}
\bigl(E_\alpha(\rho R^\alpha)-1\bigr).
\end{equation}
\end{lemma}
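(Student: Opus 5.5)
The plan is to differentiate the series termwise and then justify the exchange of sums by an $L^1$ dominated series argument.

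First, for each $n\ge1$ the function $e_n(t)=\rho^n t^{\alpha n}/\Gamma(\alpha n+1)$ is absolutely continuous on $[0,R]$. It is continuous on $[0,R]$, $C^1$ on $(0,R]$, and its classical derivative
\[
e_n'(t)=\frac{\rho^n t^{\alpha n-1}}{\Gamma(\alpha n)}
\]
(using $\alpha n\,\Gamma(\alpha n)=\Gamma(\alpha n+1)$) is nonnegative and integrable near $0$, since $\alpha n-1>-1$. Hence $e_n(t)-e_n(0)=\int_0^t e_n'(s)\,ds$, and
\[
\norm{e_n'}_{L^1(0,R)}=e_n(R)-e_n(0)=e_n(R).
\]
The function $e_0\equiv1$ has zero derivative.

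Next, set $g=\sum_{n\ge1}a_n e_n'$. By monotone convergence,
\[
\int_0^R\sum_{n\ge1}|a_n|\,e_n'(s)\,ds=\sum_{n\ge1}|a_n|\,e_n(R)\le \norm{a}_\infty\bigl(E_\alpha(\rho R^\alpha)-1\bigr),
\]
which is finite by the same Mittag--Leffler estimate as in \eqref{eq:evaluation-bound}, now with the $n=0$ term removed. So the series defining $g$ converges absolutely almost everywhere and in $L^1(0,R)$, its partial sums converge to $g$ in $L^1$, and $\norm{g}_{L^1}\le\sum_{n\ge1}|a_n|e_n(R)$.

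Then, by dominated convergence (or just $L^1$ convergence of the partial sums), for every $t\in[0,R]$,
\[
\int_0^t g\,ds=\sum_{n\ge1}a_n\bigl(e_n(t)-e_n(0)\bigr)=f(t)-f(0),
\]
where the series on the right converges pointwise by Lemma~\ref{lem:space}. So $f$ is the indefinite integral of an $L^1$ function, hence $f\in AC[0,R]$ with $f'=g$ almost everywhere, which is \eqref{eq:ordinary-derivative}. Finally, the bound \eqref{eq:AC-bound} follows from $\norm{a}_\infty\le\norm{a}_p=\norm{f}_{\X}$.

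The only point needing any care is the integrability of $t^{\alpha n-1}$ near $0$ together with the fundamental theorem of calculus for $e_n$ on $[0,R]$. Both are handled by the positivity and monotonicity of $e_n$, so I expect no real obstacle.
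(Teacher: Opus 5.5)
Your proposal is correct and follows essentially the same route as the paper: compute $\int_0^R e_n'=e_n(R)$ for the nonnegative derivatives, deduce absolute convergence of the derivative series in $L^1(0,R)$ with the bound $\sum_{n\ge1}|a_n|e_n(R)\le\|a\|_\infty\bigl(E_\alpha(\rho R^\alpha)-1\bigr)$, and integrate termwise to recover $f(t)-a_0$. The only cosmetic difference is that the paper phrases the termwise integration as continuity of $v\mapsto\int_0^t v$ from $L^1(0,R)$ into $C[0,R]$, whereas you use $L^1$ convergence of the partial sums; these are the same argument.
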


\begin{proof}
For $n\ge1$,
\[
e_n'(t)
=
\frac{\rho^n t^{\alpha n-1}}{\Gamma(\alpha n)}
\ge0,
\qquad
\int_0^R e_n'(t)\,dt=e_n(R).
\]
Hence
\[
\sum_{n=1}^{\infty}
\norm{a_ne_n'}_{L^1(0,R)}
=
\sum_{n=1}^{\infty}|a_n|e_n(R)
<\infty.
\]
The derivative series converges in $L^1(0,R)$
to a function $v$. Since integration is continuous
from $L^1(0,R)$ into $C[0,R]$,
\[
a_0+\int_0^t v(s)\,ds
=
a_0+\sum_{n=1}^{\infty}a_ne_n(t)
=
f(t).
\]
Thus $f\in AC[0,R]$ and $f'=v$ almost everywhere.
The preceding estimates give \eqref{eq:AC-bound}.
\end{proof}

Point evaluations also provide quantitative control
of compact restrictions.

\begin{proposition}[Evaluations and compact restrictions]
\label{prop:evaluation}
Let $q$ satisfy $1/p+1/q=1$, with $q=\infty$ when $p=1$.
For every $t\ge0$, the functional $\delta_t(f)=f(t)$
is continuous and
\begin{equation}
\label{eq:evaluation-exact}
\norm{\delta_t}_{(\X)^*}
=
\norm{(e_n(t))_{n\ge0}}_{\ell^q}.
\end{equation}

For $R>0$, the restriction operator
\[
\mathcal R_R:\X\longrightarrow C[0,R],
\qquad
\mathcal R_Rf=f|_{[0,R]},
\]
is compact and satisfies
\begin{equation}
\label{eq:restriction-norm}
\norm{\mathcal R_R}
=
\norm{(e_n(R))_{n\ge0}}_{\ell^q}
\le
E_\alpha(\rho R^\alpha).
\end{equation}

If
\[
P_Nf=\sum_{n=0}^{N-1}a_ne_n,
\qquad N\ge1,
\]
then
\begin{equation}
\label{eq:restriction-tail}
\norm{\mathcal R_R-\mathcal R_RP_N}
=
\norm{(e_n(R))_{n\ge N}}_{\ell^q}
\longrightarrow0.
\end{equation}
\end{proposition}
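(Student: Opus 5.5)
Everything transfers to $\ell^p(\Nzero)$ through the isometric isomorphism $J_{\alpha,\rho}$ of Lemma~\ref{lem:space}. For $f=J_{\alpha,\rho}a$ we have $\delta_t(f)=\sum_n a_n e_n(t)$, and this series converges absolutely by \eqref{eq:evaluation-bound}. So $\delta_t\circ J_{\alpha,\rho}$ is the functional on $\ell^p$ induced by the sequence $c(t)=(e_n(t))_{n\ge0}$. Since $e_n(t)\le \rho^nt^{\alpha n}/\Gamma(\alpha n+1)$ is summable (the series is $E_\alpha(\rho t^\alpha)$), $c(t)$ lies in $\ell^1\subset\ell^q$ for every $q\in[1,\infty]$. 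The duality $(\ell^p)^*=\ell^q$ for $1\le p<\infty$, including the case $p=1$, $q=\infty$, then gives \eqref{eq:evaluation-exact}. The only point to check is that the standard norming sequence works: take $a_n=|c_n|^{q-1}\overline{\operatorname{sgn} c_n}$, suitably truncated, when $p>1$, and $a=e_{n_0}$ at an index where $\sup_n c_n$ is attained when $p=1$. The entries $c_n\to0$, so the supremum is a maximum.

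For the restriction operator, the coefficients $e_n$ are nonnegative and increasing in $t$. This gives
\[
\sup_{0\le t\le R}|f(t)|\le \sup_{t\le R}\norm{c(t)}_{\ell^q}\norm{a}_p=\norm{c(R)}_{\ell^q}\norm{f}_{\X},
\]
because each $c_n(t)\le c_n(R)$. Conversely, $\norm{\mathcal R_R f}_\infty\ge|f(R)|$, so $\norm{\mathcal R_R}\ge\norm{\delta_R}=\norm{c(R)}_{\ell^q}$. The bound by $E_\alpha(\rho R^\alpha)$ follows from $\norm{c(R)}_{\ell^q}\le\norm{c(R)}_{\ell^1}=E_\alpha(\rho R^\alpha)$.

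The same argument, applied to $f-P_Nf=J_{\alpha,\rho}((a_n)_{n\ge N})$, yields \eqref{eq:restriction-tail}. The upper bound is $\norm{(c_n(R))_{n\ge N}}_{\ell^q}\,\norm{(a_n)_{n\ge N}}_p$. For the lower bound, evaluate at $t=R$ using norming vectors supported on $n\ge N$; these vectors satisfy $P_N a=0$, so the supremum equals the tail $\ell^q$ norm. This tail tends to zero because $c(R)\in\ell^1$. That holds even when $q=\infty$, since then the tail sup norm is dominated by the tail $\ell^1$ norm.

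It remains to prove compactness. The operator $\mathcal R_RP_N$ has finite rank, since each $e_n|_{[0,R]}$ is continuous. Hence $\mathcal R_R$ is a norm limit of finite-rank operators and is therefore compact. I expect the only delicate step to be the exact norm computations: the attainment argument at $t=R$ and the endpoint $p=1$. Monotonicity of $e_n$ in $t$ reduces the supremum over $[0,R]$ to the single point $R$, which settles both.
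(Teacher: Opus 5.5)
Your proposal is correct and follows the paper's proof essentially step by step. Both arguments transfer to $\ell^p$ via $J_{\alpha,\rho}$ and use $\ell^p$--$\ell^q$ duality, then combine monotonicity of $e_n$ in $t$ and H\"older for the upper bound with evaluation at $R$ for the reverse; they repeat this on coefficient tails and obtain compactness as the norm limit of the finite-rank operators $\mathcal R_RP_N$. Your explicit norming vectors (the attained maximum when $p=1$ and the tail-supported vectors giving the lower bound in \eqref{eq:restriction-tail}) fill in details the paper leaves to ``duality.''
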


\begin{proof}
Under $J_{\alpha,\rho}$, evaluation at $t$ becomes
\[
a\longmapsto\sum_{n=0}^{\infty}a_ne_n(t).
\]
Duality between $\ell^p$ and $\ell^q$ gives
\eqref{eq:evaluation-exact}.

Since $e_n(t)\le e_n(R)$ for $0\le t\le R$,
H\"older's inequality yields
\[
\norm{\mathcal R_Rf}_\infty
\le
\norm{a}_p
\norm{(e_n(R))_{n\ge0}}_{\ell^q}.
\]
Evaluation at $R$ gives the reverse operator-norm
inequality, proving \eqref{eq:restriction-norm}.

Applying the same argument to coefficient tails gives
\eqref{eq:restriction-tail}. The sequence
$(e_n(R))_{n\ge0}$ belongs to $\ell^1$ and tends to zero,
so its $\ell^q$ tails vanish, including when $q=\infty$.
Thus $\mathcal R_R$ is an operator-norm limit of
finite-rank operators.
\end{proof}

% ------------------------------------------------------------
\subsection{Fractional differentiation and integration}
% ------------------------------------------------------------

We now identify the integral action with a coefficient shift.

\begin{proposition}[Bounded Caputo realization]
\label{prop:caputo}
For every $f=J_{\alpha,\rho}a\in\X$ and $t>0$,
the integral
\begin{equation}
\label{eq:caputo-realization}
(\CD f)(t)
=
\frac{1}{\Gamma(1-\alpha)}
\int_0^t(t-s)^{-\alpha}f'(s)\,ds
\end{equation}
is absolutely convergent.

Its continuous extension to $t=0$ belongs to $\X$,
and
\begin{equation}
\label{eq:shift}
\CD f
=
\rho\sum_{j=0}^{\infty}a_{j+1}e_j.
\end{equation}
In particular,
\begin{equation}
\label{eq:caputo-conjugacy}
J_{\alpha,\rho}^{-1}\CD J_{\alpha,\rho}
=
\rho B,
\qquad
B(a_0,a_1,\ldots)=(a_1,a_2,\ldots).
\end{equation}

For every $m\in\Nzero$,
\begin{equation}
\label{eq:caputo-iterates}
\CD^mf
=
\rho^m\sum_{j=0}^{\infty}a_{m+j}e_j,
\end{equation}
and
\begin{equation}
\label{eq:normpowers}
\norm{\CD^mf}_{\X}^p
=
\rho^{mp}\sum_{j=m}^{\infty}|a_j|^p,
\qquad
\norm{\CD^m}=\rho^m.
\end{equation}
\end{proposition}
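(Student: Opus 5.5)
The plan is to compute the Caputo integral termwise, using the $L^1$ expansion of $f'$ from Lemma~\ref{lem:absolute-continuity}. Fix $t>0$. Absolute convergence of \eqref{eq:caputo-realization} amounts to showing that
\[
\int_0^t (t-s)^{-\alpha}|f'(s)|\,ds<\infty .
\]
For this I would bound $|f'(s)|$ pointwise by the series $\sum_{n\ge1}|a_n|\rho^n s^{\alpha n-1}/\Gamma(\alpha n)$. This bound holds almost everywhere because the partial sums converge in $L^1(0,t)$, so a subsequence converges a.e. By Tonelli it then suffices to show
\[
\sum_{n\ge1}\frac{|a_n|\rho^n}{\Gamma(\alpha n)}\int_0^t(t-s)^{-\alpha}s^{\alpha n-1}\,ds<\infty .
\]
The inner integral is a Beta integral, equal to
\[
B(1-\alpha,\alpha n)\,t^{\alpha n-\alpha}=\frac{\Gamma(1-\alpha)\Gamma(\alpha n)}{\Gamma(\alpha n+1-\alpha)}\,t^{\alpha(n-1)} .
\]
Hence the $n$-th term equals $\Gamma(1-\alpha)\,|a_n|\rho^n t^{\alpha(n-1)}/\Gamma(\alpha(n-1)+1)=\Gamma(1-\alpha)\rho|a_n|e_{n-1}(t)$. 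The sum is therefore bounded by $\Gamma(1-\alpha)\rho\norm{a}_pE_\alpha(\rho t^\alpha)$, using the estimate \eqref{eq:evaluation-bound}. This is the same computation as Lemma~\ref{lem:powers} with $\beta=\alpha n$.

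With absolute integrability established, dominated convergence (or Fubini for the series) lets me interchange sum and integral in \eqref{eq:caputo-realization}. This gives, for $t>0$,
\[
(\CD f)(t)=\sum_{n\ge1}a_n\rho\, e_{n-1}(t)=\rho\sum_{j\ge0}a_{j+1}e_j(t).
\]
The right-hand side is $J_{\alpha,\rho}(\rho Ba)$, and $Ba\in\ell^p$. By Lemma~\ref{lem:space} this series converges uniformly on compacts including $t=0$, so it is continuous on $[0,\infty)$. Its value at $0$ is therefore the continuous extension, namely $\rho a_1$. This proves \eqref{eq:shift}, and \eqref{eq:caputo-conjugacy} follows by uniqueness of coefficients.

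Iterating the conjugacy gives $J^{-1}\CD^mJ=\rho^mB^m$, which yields \eqref{eq:caputo-iterates}. The norm identity in \eqref{eq:normpowers} follows from the isometry $J_{\alpha,\rho}$. Its consequence $\norm{\CD^m}\le\rho^m$ is immediate, and equality is witnessed by $f=e_m$, since $\CD^me_m=\rho^me_0$.

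The only delicate step is justifying the interchange. In particular, one must handle the a.e. pointwise domination of $|f'|$, because the derivative series converges only in $L^1$. I would make this precise via the monotone convergence theorem applied to the nonnegative majorant series, whose integral against $(t-s)^{-\alpha}$ was computed above.
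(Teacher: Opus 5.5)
Your proposal is correct and follows essentially the same route as the paper. Both arguments rest on the Beta-integral identity $\int_0^t(t-s)^{-\alpha}e_n'(s)\,ds=\Gamma(1-\alpha)\rho\,e_{n-1}(t)$, Tonelli with the majorant $\rho\norm{a}_\infty E_\alpha(\rho t^\alpha)$ to justify termwise integration, continuity at $0$ via Lemma~\ref{lem:space} applied to $Ba\in\ell^p$, and the witness $e_m$ for $\norm{\CD^m}=\rho^m$; your explicit a.e.\ domination of $|f'|$ by the nonnegative majorant series simply spells out a step the paper takes for granted when it writes $f'=\sum_{n\ge1}a_ne_n'$ almost everywhere.
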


\begin{proof}
The Caputo derivative of $e_0=1$ is zero.
For $n\ge1$, the beta integral gives
\[
\begin{aligned}
&\frac{1}{\Gamma(1-\alpha)}
\int_0^t(t-s)^{-\alpha}e_n'(s)\,ds\\
&\quad=
\frac{\rho^n}{\Gamma(1-\alpha)\Gamma(\alpha n)}
\int_0^t
(t-s)^{-\alpha}s^{\alpha n-1}\,ds\\
&\quad=
\frac{\rho^n t^{\alpha(n-1)}}
{\Gamma(\alpha(n-1)+1)}
=
\rho e_{n-1}(t).
\end{aligned}
\]

By Lemma~\ref{lem:absolute-continuity},
$f'=\sum_{n\ge1}a_ne_n'$ almost everywhere.
For fixed $t>0$,
\[
\begin{aligned}
&\frac{1}{\Gamma(1-\alpha)}
\sum_{n=1}^{\infty}|a_n|
\int_0^t(t-s)^{-\alpha}e_n'(s)\,ds\\
&\qquad=
\rho\sum_{n=1}^{\infty}|a_n|e_{n-1}(t)\\
&\qquad\le
\rho\norm{a}_\infty E_\alpha(\rho t^\alpha)
<\infty.
\end{aligned}
\]
Tonelli's theorem therefore justifies integration
of the derivative series term by term.
This proves absolute convergence of
\eqref{eq:caputo-realization} and formula
\eqref{eq:shift}.

The shifted coefficients belong to $\ell^p$.
Lemma~\ref{lem:space} then gives compact uniform
convergence of the resulting series and its
continuous extension to zero.

Iterating \eqref{eq:shift} proves
\eqref{eq:caputo-iterates} and the first identity
in \eqref{eq:normpowers}. Hence
$\norm{\CD^m}\le\rho^m$.
Since
\[
\norm{e_m}_{\X}=1,
\qquad
\CD^me_m=\rho^me_0,
\]
the operator norm equals $\rho^m$.
\end{proof}

The endpoint is determined by this realization:
\begin{equation}
\label{eq:caputo-endpoint}
\CD f(0)
:=
\lim_{t\downarrow0}\CD f(t)
=
\rho a_1.
\end{equation}
All powers $\CD^m$ below denote sequential iterates.

Fractional integration gives a bounded right inverse.

\begin{proposition}[Right inverse and truncation identities]
\label{prop:inverse}
The operator $S=I_{0+}^{\alpha}$ acts boundedly on $\X$.
For $f=J_{\alpha,\rho}a$,
\begin{equation}
\label{eq:integration-coefficients}
Sf
=
\rho^{-1}\sum_{j=0}^{\infty}a_je_{j+1}.
\end{equation}
Consequently,
\begin{equation}
\label{eq:inverse}
\CD S=I,
\qquad
S\CD=I-P_1,
\end{equation}
where $P_1f=f(0)e_0$.

More generally, for every integer $m\ge1$,
\begin{equation}
\label{eq:inverse-powers}
\CD^mS^m=I,
\qquad
S^m\CD^m=I-P_m,
\qquad
\norm{S^m}=\rho^{-m}.
\end{equation}
In particular,
\begin{equation}
\label{eq:kernel-range}
\ker\CD^m
=
\spanop\{e_0,\ldots,e_{m-1}\},
\qquad
\operatorname{ran}\CD^m=\X.
\end{equation}
\end{proposition}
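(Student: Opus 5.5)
The plan is to compute $S$ termwise on the basis and then read every identity off the shift picture already established in Proposition~\ref{prop:caputo}. First, by Lemma~\ref{lem:powers} with $\gamma=\alpha$ and $\beta=\alpha j$,
\[
I_{0+}^{\alpha}e_j(t)
=\frac{\rho^j}{\Gamma(\alpha j+1)}\cdot\frac{\Gamma(\alpha j+1)}{\Gamma(\alpha(j+1)+1)}t^{\alpha(j+1)}
=\rho^{-1}e_{j+1}(t).
\]
For a general $f=J_{\alpha,\rho}a$, I would pass the Riemann--Liouville integral through the series. The series $\sum|a_j|e_j$ converges uniformly on $[0,t]$ by \eqref{eq:evaluation-bound}, and the kernel $(t-s)^{\alpha-1}$ is integrable. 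Tonelli/Fubini therefore gives absolute convergence and
\[
Sf=\rho^{-1}\sum_{j\ge0}a_je_{j+1},
\]
which is \eqref{eq:integration-coefficients}. Since the shifted sequence lies in $\ell^p$, Lemma~\ref{lem:space} shows that $Sf\in\X$. In coordinates, $J_{\alpha,\rho}^{-1}SJ_{\alpha,\rho}=\rho^{-1}F$, where $F$ is the forward shift. This is an isometry up to the factor $\rho^{-1}$, so $S$ is bounded with $\norm{S^m}=\rho^{-m}$, because $F^m$ is an isometry.

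The identities then reduce to shift algebra via \eqref{eq:caputo-conjugacy}. We have $\CD S=(\rho B)(\rho^{-1}F)=BF=I$. Likewise $S\CD=FB$, and $FB$ kills $a_0$ and keeps the rest, so $S\CD=I-P_1$. Here $P_1f=a_0e_0=f(0)e_0$, since $e_n(0)=0$ for $n\ge1$ and $e_0=1$. Iterating gives $\CD^mS^m=B^mF^m=I$ and $S^m\CD^m=F^mB^m=I-P_m$, with $P_m$ the truncation already defined in Proposition~\ref{prop:evaluation}.

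For \eqref{eq:kernel-range}, the identity $\CD^mS^m=I$ gives surjectivity of $\CD^m$. For the kernel, \eqref{eq:caputo-iterates} shows that $\CD^mf=0$ if and only if $a_{m+j}=0$ for all $j$, by uniqueness of coefficients in Lemma~\ref{lem:space}. That is exactly the span of $e_0,\ldots,e_{m-1}$.

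The only genuinely analytic step is justifying termwise application of $I_{0+}^{\alpha}$ for each fixed $t$. This is the step I would check most carefully, though it is routine: the bound
\[
\frac{1}{\Gamma(\alpha)}\int_0^t(t-s)^{\alpha-1}\sum_j|a_j|e_j(s)\,ds\le\norm{a}_p E_\alpha(\rho t^\alpha)\,\frac{t^\alpha}{\Gamma(\alpha+1)}<\infty
\]
suffices, and everything else is coefficient bookkeeping.
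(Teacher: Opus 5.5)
Your proposal is correct and follows essentially the same route as the paper: termwise application of the power formula to get $I_{0+}^{\alpha}e_j=\rho^{-1}e_{j+1}$, justified by Tonelli, then the conjugacy $J_{\alpha,\rho}^{-1}SJ_{\alpha,\rho}=\rho^{-1}F$ and the shift identities $B^mF^m=I$, $F^mB^m=I-\pi_m$, with the kernel read off from the iterate formula and surjectivity from $\CD^mS^m=I$. The only cosmetic difference is that the paper bounds the Tonelli sum exactly by $\rho^{-1}\sum_n|a_n|e_{n+1}(t)$, whereas you use the uniform bound $\norm{a}_pE_\alpha(\rho t^\alpha)$ times $\int_0^t(t-s)^{\alpha-1}\,ds/\Gamma(\alpha)$; both suffice.
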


\begin{proof}
For each $n\ge0$, the fractional integral formula gives
\[
I_{0+}^{\alpha}e_n(t)
=
\frac{\rho^nt^{\alpha(n+1)}}
{\Gamma(\alpha(n+1)+1)}
=
\rho^{-1}e_{n+1}(t).
\]

Termwise integration is justified by
\[
\begin{aligned}
&\frac{1}{\Gamma(\alpha)}
\sum_{n=0}^{\infty}|a_n|
\int_0^t(t-s)^{\alpha-1}e_n(s)\,ds\\
&\qquad=
\rho^{-1}
\sum_{n=0}^{\infty}|a_n|e_{n+1}(t)
<\infty.
\end{aligned}
\]
This proves \eqref{eq:integration-coefficients}.

Let $F$ be the forward shift on $\ell^p$:
\[
F(a_0,a_1,\ldots)=(0,a_0,a_1,\ldots).
\]
Then
\[
J_{\alpha,\rho}^{-1}SJ_{\alpha,\rho}
=
\rho^{-1}F.
\]
Since $F$ is an isometry,
\[
\norm{S^mf}_{\X}
=
\rho^{-m}\norm{f}_{\X}.
\]

Writing $\pi_m$ for truncation to the first $m$
coordinates, the shift identities are
\[
B^mF^m=I,
\qquad
F^mB^m=I-\pi_m.
\]
Conjugation by $J_{\alpha,\rho}$ proves
\eqref{eq:inverse-powers}. The coefficient formula
for $\CD^m$ gives its kernel, while
$\CD^mS^m=I$ gives surjectivity.
\end{proof}

These identities provide the operators used in the
approximation arguments. The coefficient norm is
essential to their boundedness.

\begin{proposition}[Failure of a supremum-norm bound]
\label{prop:supremum-unbounded}
For every $R>0$, there is no constant $C_R>0$ such that
\[
\norm{{}^CD_{0+}^{\alpha}f}_{C[0,R]}
\le
C_R\norm{f}_{C[0,R]}
\qquad(f\in\Pol),
\]
where the Caputo derivative at zero is interpreted by
continuous extension.
\end{proposition}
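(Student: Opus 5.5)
We will exhibit an explicit family in $\Pol$ whose supremum norms on $[0,R]$ stay bounded while the supremum norms of their Caputo derivatives, taken at $t=0$, blow up. By \eqref{eq:caputo-endpoint}, if $f=\sum_n a_ne_n\in\Pol$, then
\[
{}^CD_{0+}^{\alpha}f(0)=\rho a_1.
\]
This holds because the realization $\CD$ agrees with the classical Caputo integral on $\Pol$, by Lemma~\ref{lem:powers}, and its continuous extension to zero is $\rho a_1$. Hence it suffices to find $f_k\in\Pol$ with
\[
\norm{f_k}_{C[0,R]}\le 1
\quad\text{and}\quad
|a_1^{(k)}|\to\infty .
\]
In the variable $x=t^\alpha/R^\alpha\in[0,1]$, an element of $\Pol$ is an ordinary polynomial $q(x)=\sum_n c_nx^n$. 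Its supremum on $[0,R]$ equals $\norm{q}_{C[0,1]}$, and its $t^\alpha$-coefficient is $c_1R^{-\alpha}$. The coefficient $a_1$ of $e_1$ is then $c_1\Gamma(\alpha+1)/(\rho R^\alpha)$, so we need
\[
\norm{q_k}_{C[0,1]}\le1
\quad\text{and}\quad
|q_k'(0)|\to\infty .
\]

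For the construction we take shifted Chebyshev polynomials $q_k(x)=T_k(2x-1)$. These satisfy $\norm{q_k}_{C[0,1]}=1$. Their derivative at $0$ is
\[
q_k'(0)=2T_k'(-1)=\pm2k^2,
\]
by the standard identity $T_k'(\pm1)=(\pm1)^{k-1}k^2$. Setting $f_k(t)=q_k(t^\alpha/R^\alpha)$, we get $\norm{f_k}_{C[0,R]}=1$ and
\[
\bigl|{}^CD_{0+}^{\alpha}f_k(0)\bigr|=\frac{2k^2\Gamma(\alpha+1)}{R^\alpha}\to\infty .
\]
Hence $\norm{{}^CD_{0+}^{\alpha}f_k}_{C[0,R]}\ge|{}^CD_{0+}^{\alpha}f_k(0)|$ is unbounded, which rules out any constant $C_R$.

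A simpler alternative avoids Chebyshev polynomials. The map $q\mapsto q'(0)$ is unbounded with respect to $\norm{\cdot}_{C[0,1]}$ on polynomials. To see this, use Weierstrass approximation of $\sqrt{x}$: approximants $q_k\to\sqrt{x}$ uniformly cannot have bounded $q_k'(0)$, since otherwise a bounded linear functional would extend to the closure $C[0,1]$. Either route works, and we will present the explicit Chebyshev one.

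The main point needing care is justifying that the "continuous extension at zero" in the statement coincides with $\rho a_1$ for elements of $\Pol$. For a finite sum this is immediate: Lemma~\ref{lem:powers} gives the derivative termwise as $\rho\sum_n a_{n+1}e_n(t)$, a function continuous on $[0,R]$ whose value at $t=0$ is $\rho a_1$. The remaining steps are routine.
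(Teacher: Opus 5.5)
Your Chebyshev argument is correct, but it takes a different route from the paper.

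The paper uses the monomials $u_n(t)=(t/R)^{\alpha n}$, which have supremum norm one on $[0,R]$. It evaluates their Caputo derivatives at the right endpoint $t=R$, where the value is $R^{-\alpha}\Gamma(\alpha n+1)/\Gamma(\alpha(n-1)+1)\sim R^{-\alpha}(\alpha n)^{\alpha}$. This needs only the power formula of Lemma~\ref{lem:powers} and a Gamma-ratio asymptotic, and it never uses the value at $t=0$.

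You evaluate at $t=0$ instead. There \eqref{eq:caputo-endpoint} reduces the question to unboundedness of $q\mapsto q'(0)$ on ordinary polynomials in $x=(t/R)^{\alpha}$. You settle this with shifted Chebyshev polynomials, the extremal case of Markov's inequality. Your computations all check out: $a_1=c_1\Gamma(\alpha+1)/(\rho R^{\alpha})$, $T_k'(-1)=(-1)^{k-1}k^2$, and $\norm{f_k}_{C[0,R]}=1$.

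Your route costs one extra classical fact, but it gives a sharper conclusion. Already the single endpoint functional $f\mapsto\rho a_1$ is unbounded in the supremum norm. The blow-up is also of order $k^2$ in the degree, rather than $n^{\alpha}$.

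You should, however, delete the ``simpler alternative'', because its key claim is false. Since $q\mapsto q'(0)$ is unbounded, its kernel is dense in $C[0,1]$. So $\sqrt{x}$ \emph{can} be uniformly approximated by polynomials with $q_k'(0)=0$. One way is to take any approximants $p_k$ and subtract $p_k'(0)\,x(1-x)^{j}$ with $j$ large. Another is the M\"untz--Sz\'asz theorem applied to $\spanop\{1,x^2,x^3,\ldots\}$. Moreover, the existence of a bounded extension to $C[0,1]$ is not by itself a contradiction.

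If you want an elementary substitute for Chebyshev polynomials, the same observation supplies one. The polynomial $q_j(x)=(j+1)x(1-x)^{j}$ satisfies $\norm{q_j}_{C[0,1]}\le 1$ and $q_j'(0)=j+1\to\infty$.
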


\begin{proof}
For $n\ge1$, let
\[
u_n(t)=\left(\frac{t}{R}\right)^{\alpha n}.
\]
Then $u_n\in\Pol$ and
\[
\norm{u_n}_{C[0,R]}=1.
\]
The power formula yields
\[
{}^CD_{0+}^{\alpha}u_n(t)
=
R^{-\alpha n}
\frac{\Gamma(\alpha n+1)}
{\Gamma(\alpha(n-1)+1)}
t^{\alpha(n-1)}.
\]
Therefore,
\[
\begin{aligned}
\norm{{}^CD_{0+}^{\alpha}u_n}_{C[0,R]}
&=
R^{-\alpha}
\frac{\Gamma(\alpha n+1)}
{\Gamma(\alpha(n-1)+1)}\\
&\sim
R^{-\alpha}(\alpha n)^\alpha
\longrightarrow\infty.
\end{aligned}
\]
The asserted estimate is impossible.
\end{proof}

% ============================================================
\section{Local defects and optimal perturbations}
\label{sec:local-defects}
% ============================================================

Fix $\lambda\in\C\setminus\{0\}$ and write
\[
A=\lambda\CD,
\qquad
c=\lambda\rho,
\qquad
J=J_{\alpha,\rho},
\qquad
S_\lambda=\lambda^{-1}S.
\]
The coefficient representation gives, for $n\ge0$,
\begin{equation}
\label{eq:scaled-shift-identities}
A^nJ=c^nJB^n,
\qquad
S_\lambda^nJ=c^{-n}JF^n,
\end{equation}
where $B$ and $F$ are the backward and forward shifts
on $\ell^p(\Nzero)$. In particular,
\begin{equation}
\label{eq:scaled-inverse-identities}
A^nS_\lambda^n=I,
\qquad
S_\lambda^nA^n=I-P_n,
\qquad
\norm{A^n}=|c|^n,
\end{equation}
with $P_0=0$. Moreover,
\begin{equation}
\label{eq:scaled-inverse-norm}
\norm{S_\lambda^ng}_{\X}
=
|c|^{-n}\norm{g}_{\X}
\qquad(g\in\X).
\end{equation}

These identities allow both approximation problems
to be solved explicitly.

% ------------------------------------------------------------
\subsection{Balanced errors and exact arrival}
% ------------------------------------------------------------

For $f,g\in\X$, define
\begin{align}
\Delta_{A,n}(f,g)
&=
\inf_{h\in\X}
\max\left\{
\norm{h}_{\X},
\norm{A^n(f+h)-g}_{\X}
\right\},
\label{eq:defect-n}\\
\Delta_A(f,g)
&=
\inf_{n\ge0}\Delta_{A,n}(f,g).
\label{eq:defect}
\end{align}
The first quantity balances the initial displacement
against the final error.

\begin{theorem}[Exact balanced approximation]
\label{thm:local-cost}
Let $f,g\in\X$ and $n\ge0$. Then
\begin{equation}
\label{eq:local-cost}
\boxed{
\Delta_{A,n}(f,g)
=
\frac{\norm{g-A^nf}_{\X}}{1+|c|^n}.
}
\end{equation}
The infimum is attained at
\begin{equation}
\label{eq:balanced-h}
h_{n,*}
=
\frac{|c|^n}{1+|c|^n}
S_\lambda^n(g-A^nf).
\end{equation}
Both errors have the same value:
\begin{equation}
\label{eq:equal-errors}
\norm{h_{n,*}}_{\X}
=
\norm{A^n(f+h_{n,*})-g}_{\X}
=
\Delta_{A,n}(f,g).
\end{equation}

If $f=Ja$ and $g=Jb$, then
\begin{equation}
\label{eq:local-coefficients}
\Delta_{A,n}(f,g)
=
\frac{
\left(
\sum_{j=0}^{\infty}
|b_j-c^na_{n+j}|^p
\right)^{1/p}
}
{1+|c|^n}.
\end{equation}
\end{theorem}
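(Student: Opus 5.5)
The argument has two halves: a lower bound from the triangle inequality and $\norm{A^n}=|c|^n$, and an upper bound from the explicit perturbation \eqref{eq:balanced-h}. Throughout, set
\[
r=g-A^nf.
\]
The case $n=0$ needs no special treatment: $A^0=I$, $S_\lambda^0=I$ and $|c|^0=1$, so the formula reads $\norm{g-f}/2$ and the same steps apply verbatim.

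\emph{Lower bound.} Let $h\in\X$ be arbitrary and write $\delta=\max\{\norm{h},\norm{A^n(f+h)-g}\}$. Decompose
\[
r=A^nh-\bigl(A^n(f+h)-g\bigr).
\]
By \eqref{eq:scaled-inverse-identities}, $\norm{A^n}=|c|^n$, so
\[
\norm{r}\le |c|^n\norm{h}+\norm{A^n(f+h)-g}\le(1+|c|^n)\delta.
\]
Taking the infimum over $h$ gives $\Delta_{A,n}(f,g)\ge\norm{r}/(1+|c|^n)$.

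\emph{Attainment.} Set $h_{n,*}=\tfrac{|c|^n}{1+|c|^n}S_\lambda^n r$. By \eqref{eq:scaled-inverse-norm}, $\norm{S_\lambda^n r}=|c|^{-n}\norm{r}$, hence $\norm{h_{n,*}}=\norm{r}/(1+|c|^n)$. Next, $A^nS_\lambda^n=I$ gives
\[
A^n(f+h_{n,*})-g=A^nf+\tfrac{|c|^n}{1+|c|^n}r-g=-\tfrac{1}{1+|c|^n}r,
\]
whose norm is again $\norm{r}/(1+|c|^n)$. Both errors therefore equal the lower bound. This proves \eqref{eq:local-cost} with the infimum attained, and also \eqref{eq:equal-errors}.

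\emph{Coefficient formula.} By \eqref{eq:scaled-shift-identities}, $A^nf=c^nJB^na$, so $r=J\bigl((b_j-c^na_{n+j})_{j\ge0}\bigr)$. Since $J$ is an isometry (Lemma~\ref{lem:space}), $\norm{r}_{\X}$ is the $\ell^p$ norm of this sequence, which gives \eqref{eq:local-coefficients}.

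There is no genuine obstacle. The only point requiring care is the lower bound. It uses exactly the operator norm $\norm{A^n}=|c|^n$, and this is sharp precisely because $S_\lambda^n$ is a right inverse that scales norms by exactly $|c|^{-n}$. That exact scaling is what makes the two bounds meet.
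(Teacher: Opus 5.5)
Your proof is correct and follows essentially the same route as the paper's. The lower bound comes from the triangle inequality with $\norm{A^n}=|c|^n$, attainment comes from the scaled right-inverse perturbation $h_{n,*}$ using $A^nS_\lambda^n=I$ and $\norm{S_\lambda^n r}=|c|^{-n}\norm{r}$, and the coefficient formula is read off through the shift conjugacy and the isometry $J$.
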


\begin{proof}
Set
\[
w=g-A^nf,
\qquad
M=|c|^n.
\]
For every $h\in\X$,
\[
\begin{aligned}
\norm{w}_{\X}
&\le
\norm{w-A^nh}_{\X}
+
\norm{A^nh}_{\X}\\
&\le
\norm{w-A^nh}_{\X}
+
M\norm{h}_{\X}\\
&\le
(1+M)
\max\left\{
\norm{h}_{\X},
\norm{A^nh-w}_{\X}
\right\}.
\end{aligned}
\]
Therefore,
\[
\Delta_{A,n}(f,g)
\ge
\frac{\norm{w}_{\X}}{1+M}.
\]

For
\[
h_*=\frac{M}{1+M}S_\lambda^nw,
\]
equations \eqref{eq:scaled-inverse-identities}
and \eqref{eq:scaled-inverse-norm} yield
\[
\norm{h_*}_{\X}
=
\frac{\norm{w}_{\X}}{1+M},
\qquad
A^nh_*-w=-\frac{w}{1+M}.
\]
Thus the lower bound is attained and
\eqref{eq:equal-errors} follows.

Finally,
\[
w
=
\sum_{j=0}^{\infty}
(b_j-c^na_{n+j})e_j,
\]
which proves \eqref{eq:local-coefficients}.
\end{proof}

Unequal error tolerances admit an equally explicit test.

\begin{proposition}[Prescribed error budgets]
\label{prop:error-budgets}
Let $f,g\in\X$, $n\ge0$ and $\varepsilon,\eta\ge0$.
There exists $h\in\X$ such that
\begin{equation}
\label{eq:error-budgets}
\norm{h}_{\X}\le\varepsilon,
\qquad
\norm{A^n(f+h)-g}_{\X}\le\eta
\end{equation}
if and only if
\begin{equation}
\label{eq:error-feasibility}
\norm{g-A^nf}_{\X}
\le
|c|^n\varepsilon+\eta.
\end{equation}
\end{proposition}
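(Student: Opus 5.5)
Set $w=g-A^nf$ and $M=|c|^n$, as in the proof of Theorem~\ref{thm:local-cost}. We prove necessity by the triangle inequality and sufficiency by rescaling the right-inverse perturbation along $S_\lambda^nw$.

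\emph{Necessity.} Suppose $h$ satisfies \eqref{eq:error-budgets}. Then $A^n(f+h)-g=A^nh-w$. By \eqref{eq:scaled-inverse-identities} we have $\norm{A^n}=M$, so
\[
\norm{w}_{\X}\le\norm{w-A^nh}_{\X}+\norm{A^nh}_{\X}\le\eta+M\varepsilon .
\]
This is exactly \eqref{eq:error-feasibility}.

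\emph{Sufficiency.} Assume $\norm{w}_{\X}\le M\varepsilon+\eta$. We look for $h=\theta S_\lambda^nw$ with a scalar $\theta\in[0,1]$. By \eqref{eq:scaled-inverse-identities} and \eqref{eq:scaled-inverse-norm},
\[
A^nh-w=-(1-\theta)w,
\qquad
\norm{h}_{\X}=\theta M^{-1}\norm{w}_{\X}
\]
when $M>0$. Note that $M=|c|^n>0$ always, since $\lambda\ne0$ and $\rho>0$. The two constraints therefore become
\[
\theta\norm{w}_{\X}\le M\varepsilon,
\qquad
(1-\theta)\norm{w}_{\X}\le\eta .
\]
If $w=0$, take $h=0$. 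Otherwise take $\theta=\min\{1,M\varepsilon/\norm{w}_{\X}\}$, so the first constraint holds.
\begin{itemize}
\item If $\theta=1$, the second constraint reads $0\le\eta$, which holds.
\item If $\theta<1$, then $(1-\theta)\norm{w}_{\X}=\norm{w}_{\X}-M\varepsilon\le\eta$ by hypothesis.
\end{itemize}
In both cases $h$ satisfies \eqref{eq:error-budgets}.

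There is no real obstacle. The only points needing care are the degenerate cases $w=0$ and $\varepsilon=0$ or $\eta=0$, and the choice of $\theta$ above covers them. As a consistency check, taking $\varepsilon=\eta$ recovers the value \eqref{eq:local-cost} of Theorem~\ref{thm:local-cost}.
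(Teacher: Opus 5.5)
Your proposal is correct and follows the paper's proof essentially verbatim: necessity from the triangle inequality with $\norm{A^n}=|c|^n$, and sufficiency via $h=\theta S_\lambda^n w$ with $\theta=\min\{1,|c|^n\varepsilon/\norm{w}_{\X}\}$ (and $h=0$ when $w=0$). Your case split $\theta=1$ versus $\theta<1$ is the same computation the paper writes as $(1-\theta)\norm{w}_{\X}=\max\{0,\norm{w}_{\X}-|c|^n\varepsilon\}\le\eta$.
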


\begin{proof}
Write $w=g-A^nf$ and $M=|c|^n$.
If \eqref{eq:error-budgets} holds, then
\[
\norm{w}_{\X}
\le
\norm{A^nh}_{\X}
+
\norm{w-A^nh}_{\X}
\le
M\varepsilon+\eta.
\]

Conversely, assume \eqref{eq:error-feasibility}.
If $w=0$, take $h=0$.
Otherwise, set
\[
\theta
=
\min\left\{
1,\frac{M\varepsilon}{\norm{w}_{\X}}
\right\},
\qquad
h=\theta S_\lambda^nw.
\]
Then
\[
\norm{h}_{\X}
=
\theta M^{-1}\norm{w}_{\X}
\le\varepsilon,
\]
and
\[
\begin{aligned}
\norm{A^nh-w}_{\X}
&=
(1-\theta)\norm{w}_{\X}\\
&=
\max\left\{
0,\norm{w}_{\X}-M\varepsilon
\right\}
\le\eta.
\end{aligned}
\]
\end{proof}

Setting the terminal error to zero leads to the
exact-arrival problem.

\begin{theorem}[Unique least-norm exact perturbation]
\label{thm:optimal}
Let $f=Ja$, $g=Jb$ belong to $\X$, and let $n\ge1$.
The constraint
\[
A^n(f+h)=g
\]
has a unique least-norm solution,
\begin{equation}
\label{eq:hoptimal}
h_n
=
S_\lambda^n(g-A^nf)
=
-(I-P_n)f+S_\lambda^ng.
\end{equation}
Its norm is
\begin{align}
\dist\bigl(f,A^{-n}\{g\}\bigr)
&=
\norm{h_n}_{\X}
\notag\\
&=
\left(
\sum_{j=0}^{\infty}
|c^{-n}b_j-a_{n+j}|^p
\right)^{1/p}
\label{eq:optimal}\\
&=
|c|^{-n}\norm{g-A^nf}_{\X}.
\notag
\end{align}
Consequently,
\begin{equation}
\label{eq:optimal-bound}
\norm{h_n}_{\X}
\le
\norm{(I-P_n)f}_{\X}
+
|c|^{-n}\norm{g}_{\X}.
\end{equation}
Here $A^{-n}\{g\}$ denotes the preimage of $\{g\}$.
\end{theorem}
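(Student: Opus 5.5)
The plan is to pass to coefficients and use the shift identities \eqref{eq:scaled-shift-identities}. Write $h=Jd$ with $d\in\ell^p(\Nzero)$. Since $A^nJ=c^nJB^n$, the constraint $A^n(f+h)=g$ reads
\[
c^n(a_{n+j}+d_{n+j})=b_j\qquad(j\ge0).
\]
The tail $d_{n+j}=c^{-n}b_j-a_{n+j}$ is therefore forced for $j\ge0$, and the head coordinates $d_0,\ldots,d_{n-1}$ are free. This also shows that the solution set is the affine set $h_n+\ker A^n$, with $\ker A^n=\spanop\{e_0,\ldots,e_{n-1}\}$ by \eqref{eq:kernel-range}. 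In particular it is nonempty, since $A^n$ is surjective.

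Next I would minimize the norm over the free head. We have
\[
\norm{h}_{\X}^p=\sum_{k<n}|d_k|^p+\sum_{j\ge0}|c^{-n}b_j-a_{n+j}|^p.
\]
For $1\le p<\infty$ the first sum vanishes exactly when $d_0=\cdots=d_{n-1}=0$. So the minimum is attained, and it is attained uniquely at the vector whose head is zero and whose tail is the forced one. The head and tail have disjoint supports, so the $p$-th powers add; this is why the argument covers $p=1$ without any strict convexity. The minimizer is $J F^n(c^{-n}b-B^na)$, and by \eqref{eq:scaled-shift-identities} this equals $S_\lambda^n(g-A^nf)$. Using $S_\lambda^nA^n=I-P_n$ from \eqref{eq:scaled-inverse-identities}, it also equals $-(I-P_n)f+S_\lambda^ng$. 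This gives \eqref{eq:hoptimal}.

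The norm formula then follows. The minimal norm is the $\ell^p$ norm of the tail, which is the middle expression in \eqref{eq:optimal}. Applying \eqref{eq:scaled-inverse-norm} to $S_\lambda^n(g-A^nf)$ gives $|c|^{-n}\norm{g-A^nf}_{\X}$. The quantity $\dist(f,A^{-n}\{g\})$ equals $\inf\{\norm{h}:A^n(f+h)=g\}$ by substituting $y=f+h$, and this infimum is the minimum just found. Finally, \eqref{eq:optimal-bound} is the triangle inequality applied to $-(I-P_n)f+S_\lambda^ng$, together with $\norm{S_\lambda^ng}=|c|^{-n}\norm{g}$.

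There is no real obstacle here. The step needing the most care is the uniqueness claim. It rests on the exact additivity of $p$-th powers over disjoint supports, and it needs $p<\infty$: in the sup norm, uniqueness would fail.
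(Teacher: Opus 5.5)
Your proposal is correct and follows essentially the same argument as the paper: the solution set is $h_n+\ker A^n$ with $\ker A^n=P_n\X$, and the disjoint head/tail supports give $\norm{h}_{\X}^p=\norm{h_n}_{\X}^p+\norm{k}_{\X}^p$, which for $p<\infty$ forces $k=0$. The only cosmetic difference is that you read off the forced tail directly from the coefficient equations, whereas the paper first produces the solution $h_n$ through the right-inverse identity $A^nS_\lambda^n=I$.
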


\begin{proof}
The right-inverse identity gives
\[
A^nh_n=g-A^nf.
\]
Hence every admissible perturbation has the form
\[
h=h_n+k,
\qquad
k\in\ker A^n=P_n\X.
\]
The first $n$ coefficients of $h_n$ vanish,
whereas all coefficients of $k$ after index $n-1$
vanish. Therefore,
\[
\norm{h}_{\X}^p
=
\norm{h_n}_{\X}^p+\norm{k}_{\X}^p.
\]
Since $p<\infty$, equality
$\norm{h}_{\X}=\norm{h_n}_{\X}$ holds precisely
when $k=0$. This proves uniqueness.

In coefficients,
\[
h_n
=
\sum_{j=0}^{\infty}
(c^{-n}b_j-a_{n+j})e_{n+j}.
\]
Taking its norm gives \eqref{eq:optimal}.
The identity
$S_\lambda^nA^n=I-P_n$ and the triangle inequality
give \eqref{eq:optimal-bound}.
\end{proof}

% ------------------------------------------------------------
\subsection{Approximation thresholds and spectral data}
% ------------------------------------------------------------

The coefficient tails determine the asymptotic costs.

\begin{lemma}[Local defects and transitivity]
\label{lem:defect-characterization}
Let $X$ be a separable infinite-dimensional Banach space
and $T\in\mathcal L(X)$. For $u,v\in X$, set
\[
\Delta_{T,n}(u,v)
=
\inf_{h\in X}
\max\left\{
\norm{h},
\norm{T^n(u+h)-v}
\right\},
\qquad
\Delta_T(u,v)=\inf_{n\ge0}\Delta_{T,n}(u,v).
\]
Then:
\begin{enumerate}
\item $T$ is hypercyclic if and only if
\[
\Delta_T(u,v)=0
\qquad(u,v\in X);
\]
\item $T$ is mixing if and only if
\[
\Delta_{T,n}(u,v)\longrightarrow0
\qquad(u,v\in X).
\]
\end{enumerate}
\end{lemma}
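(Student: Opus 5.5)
The plan is to translate $\Delta$ into statements about open balls and then invoke Theorem~\ref{thm:birkhoff}. The key observation is that for $\delta>0$ the condition $\Delta_{T,n}(u,v)<\delta$ holds exactly when there is $h$ with $\norm{h}<\delta$ and $\norm{T^n(u+h)-v}<\delta$. Writing $x=u+h$, this says
\[
T^n\bigl(B(u,\delta)\bigr)\cap B(v,\delta)\ne\varnothing .
\]
If $\Delta_{T,n}(u,v)<\delta$, the infimum is strictly below $\delta$, so some $h$ satisfies both strict inequalities. Conversely, such an $h$ makes the maximum, and hence the infimum, smaller than $\delta$. I will record this equivalence as a preliminary step, since both parts of the lemma reduce to it.

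For (i), I use Theorem~\ref{thm:birkhoff}, which says hypercyclicity is equivalent to topological transitivity. Suppose $T$ is transitive, and fix $u,v$ and $\delta>0$. Applying transitivity to the balls $U=B(u,\delta)$ and $V=B(v,\delta)$ gives some $n$ with $\Delta_{T,n}(u,v)<\delta$. Hence $\Delta_T(u,v)<\delta$ for every $\delta>0$, so $\Delta_T(u,v)=0$.

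Conversely, suppose $\Delta_T\equiv0$, and let $U,V$ be nonempty open sets. Choose $u\in U$, $v\in V$ and $\delta>0$ with $B(u,\delta)\subset U$ and $B(v,\delta)\subset V$. Since $\Delta_T(u,v)=0<\delta$, some $n$ satisfies $\Delta_{T,n}(u,v)<\delta$. The preliminary equivalence then gives $T^n(U)\cap V\ne\varnothing$, so $T$ is transitive and therefore hypercyclic.

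For (ii), the argument is the same with "some $n$" replaced by "all $n\ge n_0$". If $T$ is mixing, then for each $\delta>0$ there is $n_0$ such that $T^n(B(u,\delta))\cap B(v,\delta)\ne\varnothing$ for all $n\ge n_0$. So $\Delta_{T,n}(u,v)<\delta$ eventually, which means $\Delta_{T,n}(u,v)\to0$. Conversely, if $\Delta_{T,n}(u,v)\to0$, take balls inside $U$ and $V$ as before. Then there is $n_0$ with $\Delta_{T,n}(u,v)<\delta$ for all $n\ge n_0$, and this yields $T^n(U)\cap V\ne\varnothing$ for those $n$.

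There is no real obstacle in this argument. The only point that needs care is the strict versus non-strict inequality in the preliminary equivalence. It is resolved by using open balls together with the fact that an infimum strictly below $\delta$ is witnessed by some $h$. The separability and infinite-dimensionality assumptions are used only through Theorem~\ref{thm:birkhoff} in part (i).
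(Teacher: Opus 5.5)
Your proposal is correct and matches the paper's proof: both reduce $\Delta_{T,n}(u,v)<\delta$ to $T^n\bigl(B(u,\delta)\bigr)\cap B(v,\delta)\ne\varnothing$. Both then shrink arbitrary open sets to balls of a common radius, and invoke Theorem~\ref{thm:birkhoff} only for part (i).
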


\begin{proof}
For every $n\ge0$ and $\varepsilon>0$,
the definition of the infimum gives
\begin{equation}
\label{eq:defect-ball-equivalence}
\begin{aligned}
\Delta_{T,n}(u,v)<\varepsilon
&\quad\Longleftrightarrow\quad
\exists h\in X:
\ \norm{h}<\varepsilon,\
\norm{T^n(u+h)-v}<\varepsilon\\
&\quad\Longleftrightarrow\quad
T^n\bigl(B_X(u,\varepsilon)\bigr)
\cap B_X(v,\varepsilon)\ne\varnothing.
\end{aligned}
\end{equation}

Consequently, $\Delta_T(u,v)=0$ for every $u,v$
is equivalent to transitivity. Indeed, every pair
of nonempty open sets contains balls with a common
positive radius. The first assertion therefore
follows from Theorem~\ref{thm:birkhoff}.

Likewise, convergence of $\Delta_{T,n}(u,v)$ to zero
means that the intersection in
\eqref{eq:defect-ball-equivalence} is nonempty for
every sufficiently large $n$, for each choice of
$u,v$ and $\varepsilon$.
This is equivalent to mixing.
\end{proof}

\begin{corollary}[Sharp approximation threshold]
\label{cor:local-threshold}
\label{cor:mixing}
Let $A=\lambda\CD$, where $\lambda\ne0$, and set
$c=\lambda\rho$.

If $|c|>1$, then, for every $f,g\in\X$,
\begin{equation}
\label{eq:defect-tail-bound}
\begin{aligned}
\Delta_{A,n}(f,g)
&\le
\frac{\norm{g}_{\X}}{1+|c|^n}\\
&\quad+
\frac{|c|^n}{1+|c|^n}
\norm{(I-P_n)f}_{\X}
\longrightarrow0.
\end{aligned}
\end{equation}
Hence $A$ is mixing and hypercyclic.

Moreover, the vectors
\begin{equation}
\label{eq:exact-orbits}
z_n=P_nf+S_\lambda^ng
\end{equation}
satisfy
\[
z_n\longrightarrow f,
\qquad
A^nz_n=g
\qquad(n\ge1).
\]

If $0<|c|\le1$, then
\begin{equation}
\label{eq:subcritical-defect}
\Delta_A(0,g)=\frac12\norm{g}_{\X}
\qquad(g\in\X),
\end{equation}
and $A$ is not hypercyclic.
\end{corollary}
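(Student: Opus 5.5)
The whole corollary can be read off from Theorem~\ref{thm:local-cost}, Theorem~\ref{thm:optimal} and Lemma~\ref{lem:defect-characterization}.

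\textbf{Supercritical case $|c|>1$.} Start from the exact value $\Delta_{A,n}(f,g)=\norm{g-A^nf}_{\X}/(1+|c|^n)$. Split $g-A^nf$ by the triangle inequality, and use the coefficient formula \eqref{eq:caputo-iterates}, which gives
\[
\norm{A^nf}_{\X}=|c|^n\norm{(I-P_n)f}_{\X}.
\]
This yields the bound \eqref{eq:defect-tail-bound}.

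The first term tends to zero because $|c|^n\to\infty$. In the second, the factor $|c|^n/(1+|c|^n)$ is at most $1$, and $\norm{(I-P_n)f}_{\X}^p=\sum_{j\ge n}|a_j|^p\to0$ since $p<\infty$. Lemma~\ref{lem:defect-characterization}(ii) then gives mixing, and part (i), or the fact that mixing implies transitivity together with Theorem~\ref{thm:birkhoff}, gives hypercyclicity.

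For the exact orbits, note that $z_n=f+h_n$ with $h_n=-(I-P_n)f+S_\lambda^ng$ from \eqref{eq:hoptimal}, so $A^nz_n=g$. Moreover
\[
\norm{z_n-f}_{\X}\le\norm{(I-P_n)f}_{\X}+|c|^{-n}\norm{g}_{\X}\to0,
\]
which is just \eqref{eq:optimal-bound}. Alternatively, compute $A^nP_nf=0$ and $A^nS_\lambda^ng=g$ directly.

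\textbf{Subcritical case $0<|c|\le1$.} Here $|c|^n\le1$ for every $n$, and $A^n0=0$. The theorem gives
\[
\Delta_{A,n}(0,g)=\frac{\norm{g}_{\X}}{1+|c|^n}\ge\frac{\norm{g}_{\X}}{2},
\]
with equality at $n=0$. Taking the infimum over $n\ge0$ gives exactly $\tfrac12\norm{g}_{\X}$. Choosing $g\neq0$ makes $\Delta_A(0,g)>0$, so Lemma~\ref{lem:defect-characterization}(i) shows $A$ is not hypercyclic. As a sanity check, this also follows from $\norm{A}\le1$, since a contraction cannot be hypercyclic.

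\textbf{Expected obstacle.} There is no real difficulty here. The only points needing care are the identity $\norm{A^nf}=|c|^n\norm{(I-P_n)f}$, which relies on the shift structure of \eqref{eq:caputo-iterates}, and including $n=0$ in the infimum so that the value $\tfrac12\norm{g}_{\X}$ is attained rather than merely approached.
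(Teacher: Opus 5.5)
Your proposal is correct and follows essentially the same route as the paper. Both arguments use the exact formula of Theorem~\ref{thm:local-cost}, the identity $\norm{A^nf}_{\X}=|c|^n\norm{(I-P_n)f}_{\X}$ and Lemma~\ref{lem:defect-characterization} when $|c|>1$, the identities $A^nP_n=0$ and $A^nS_\lambda^n=I$ for $z_n$, and the value at $n=0$ for the subcritical defect; your additional remarks (viewing $z_n$ as $f+h_n$ via Theorem~\ref{thm:optimal}, and the contraction argument) are valid but not needed.
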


\begin{proof}
For $f=Ja$, the coefficient representation gives
\[
\norm{A^nf}_{\X}
=
|c|^n
\left(
\sum_{j=n}^{\infty}|a_j|^p
\right)^{1/p}
=
|c|^n\norm{(I-P_n)f}_{\X}.
\]
Theorem~\ref{thm:local-cost} and the triangle
inequality therefore yield
\[
\begin{aligned}
\Delta_{A,n}(f,g)
&=
\frac{\norm{g-A^nf}_{\X}}{1+|c|^n}\\
&\le
\frac{\norm{g}_{\X}}{1+|c|^n}
+
\frac{|c|^n}{1+|c|^n}
\norm{(I-P_n)f}_{\X}.
\end{aligned}
\]
Since $a\in\ell^p$,
\[
\norm{(I-P_n)f}_{\X}\longrightarrow0.
\]
When $|c|>1$, the first term also tends to zero,
whereas $|c|^n/(1+|c|^n)\le1$.
Thus \eqref{eq:defect-tail-bound} holds.
Lemma~\ref{lem:defect-characterization} gives
mixing, and Theorem~\ref{thm:birkhoff} gives
hypercyclicity.

For the vectors in \eqref{eq:exact-orbits},
\[
\norm{z_n-f}_{\X}
\le
\norm{(I-P_n)f}_{\X}
+
|c|^{-n}\norm{g}_{\X}
\longrightarrow0.
\]
The identities $A^nP_n=0$ and
$A^nS_\lambda^n=I$ give $A^nz_n=g$.

Finally, if $0<|c|\le1$, then
\[
\Delta_{A,n}(0,g)
=
\frac{\norm{g}_{\X}}{1+|c|^n}.
\]
The denominator is at most two, with equality
at $n=0$. Hence
\[
\Delta_A(0,g)=\frac12\norm{g}_{\X}.
\]
For $g\ne0$, this value is positive.
The first assertion of
Lemma~\ref{lem:defect-characterization}
therefore excludes hypercyclicity.
\end{proof}

The same representation determines the spectral scale.

\begin{proposition}[Spectrum, eigenvectors and resolvent]
\label{prop:spectrum}
The spectrum and point spectrum of $\CD$ are
\begin{equation}
\label{eq:caputo-spectrum}
\sigma(\CD)
=
\{\mu\in\C:|\mu|\le\rho\},
\qquad
\sigma_p(\CD)
=
\{\mu\in\C:|\mu|<\rho\}.
\end{equation}
For $|\mu|<\rho$,
\begin{equation}
\label{eq:caputo-eigenspace}
\ker(\CD-\mu I)
=
\spanop\{E_\alpha(\mu t^\alpha)\},
\end{equation}
and
\begin{equation}
\label{eq:eigenvector-norm}
\norm{E_\alpha(\mu t^\alpha)}_{\X}
=
\left(1-|\mu/\rho|^p\right)^{-1/p}.
\end{equation}

For $|\mu|>\rho$, the resolvent satisfies
\begin{equation}
\label{eq:caputo-resolvent}
(\mu I-\CD)^{-1}
=
\sum_{k=0}^{\infty}\mu^{-k-1}\CD^k
\end{equation}
in operator norm, with
\begin{equation}
\label{eq:resolvent-norm}
\norm{(\mu I-\CD)^{-1}}
=
\frac1{|\mu|-\rho}.
\end{equation}
\end{proposition}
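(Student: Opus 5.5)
Everything will be transferred to $\ell^p(\Nzero)$ through the conjugacy \eqref{eq:caputo-conjugacy}: $\CD=J(\rho B)J^{-1}$. Spectrum, point spectrum and resolvent norms are invariant under this isometric isomorphism. So it suffices to treat $\rho B$ on $\ell^p$, equivalently $B$ after scaling $\mu\mapsto\mu/\rho$. The plan is to first compute the eigenvectors explicitly, then handle $|\mu|>\rho$ by the Neumann series, and finally close the spectrum using compactness of the spectrum.

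\textbf{Point spectrum and eigenvectors.} I will solve $\rho Ba=\mu a$ coefficientwise. The equation $\rho a_{j+1}=\mu a_j$ forces
\[
a_j=a_0(\mu/\rho)^j .
\]
This sequence lies in $\ell^p$ (with $p<\infty$) precisely when $|\mu|<\rho$ or $a_0=0$. Hence $\sigma_p(\CD)=\{|\mu|<\rho\}$, and each eigenspace is one-dimensional, spanned by $J\bigl((\mu/\rho)^j\bigr)_j$. Evaluating, this function is
\[
\sum_j(\mu/\rho)^j\rho^jt^{\alpha j}/\Gamma(\alpha j+1)=E_\alpha(\mu t^\alpha),
\]
which gives \eqref{eq:caputo-eigenspace}. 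Summing the geometric series gives
\[
\norm{E_\alpha(\mu t^\alpha)}_{\X}^p=\sum_j|\mu/\rho|^{jp}=(1-|\mu/\rho|^p)^{-1},
\]
which is \eqref{eq:eigenvector-norm}.

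\textbf{Resolvent.} Since $\norm{\CD^k}=\rho^k$ by \eqref{eq:normpowers}, the series $\sum_k\mu^{-k-1}\CD^k$ converges absolutely in operator norm for $|\mu|>\rho$. The standard telescoping identity shows it inverts $\mu I-\CD$, which gives \eqref{eq:caputo-resolvent}. Summing the operator norms of the terms yields the upper bound $\norm{(\mu I-\CD)^{-1}}\le\sum_k|\mu|^{-k-1}\rho^k=1/(|\mu|-\rho)$.

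The lower bound, needed for \eqref{eq:resolvent-norm}, is the step I expect to need care. I would test the resolvent on vectors whose coefficients are aligned with the phase of $\mu$. On $e_N$ one has $\CD^ke_N=\rho^ke_{N-k}$ for $k\le N$, so
\[
(\mu I-\CD)^{-1}e_N=\sum_{k=0}^N\mu^{-k-1}\rho^ke_{N-k}.
\]
This has norm $\bigl(\sum_{k\le N}|\mu|^{-(k+1)p}\rho^{kp}\bigr)^{1/p}$, which only gives the $\ell^p$ sum rather than the $\ell^1$ sum. Instead I would take a normalized long flat block with phases adjusted:
\[
x_N=N^{-1/p}\sum_{n=1}^{N}\bigl(\bar\mu/|\mu|\bigr)^{-n}e_n .
\]
Then the coefficients of $\sum_k\mu^{-k-1}\CD^kx_N$ at interior indices add with a common phase. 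Their moduli approach $\sum_k|\mu|^{-k-1}\rho^k=1/(|\mu|-\rho)$ up to boundary effects of relative size $O(1/N)$ in $\ell^p$ norm. Letting $N\to\infty$ gives $\norm{(\mu I-\CD)^{-1}}\ge1/(|\mu|-\rho)$. The phase bookkeeping in this step, choosing $\omega$ with $\omega\mu^{-1}\rho>0$ and coefficients $\omega^n$, is the main computation to check.

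\textbf{Spectrum.} The resolvent formula shows $\sigma(\CD)\subseteq\{|\mu|\le\rho\}$. Since $\sigma_p$ is contained in the spectrum and the spectrum is closed, the open disc's closure $\{|\mu|\le\rho\}$ lies in $\sigma(\CD)$, proving \eqref{eq:caputo-spectrum}.
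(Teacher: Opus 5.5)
Your proof is correct. Apart from one step it follows the paper: the recursion $\rho a_{j+1}=\mu a_j$, the geometric-series norm of $E_\alpha(\mu t^\alpha)$, the Neumann series with $\norm{\CD^k}=\rho^k$ for the upper bound, and closedness of $\sigma(\CD)$ together with $\sigma_p(\CD)\subset\sigma(\CD)$ are exactly the paper's ingredients.

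The difference is the lower bound in \eqref{eq:resolvent-norm}. The paper reuses the eigenvectors it has just found. For $|\nu|<\rho$ one has $(\mu I-\CD)^{-1}E_\alpha(\nu t^\alpha)=(\mu-\nu)^{-1}E_\alpha(\nu t^\alpha)$, so the resolvent norm is at least $1/|\mu-\nu|$. Taking $\nu=r\mu/|\mu|$ with $r\uparrow\rho$ then gives $1/(|\mu|-\rho)$, with no truncation error to control.

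Your phased block $x_N$ is instead a finitely supported approximate eigenvector for the boundary point $\rho\mu/|\mu|$, and your bookkeeping checks out. With $q=\rho/|\mu|$, the coefficient of index $1\le j\le N$ of $(\mu I-\CD)^{-1}x_N$ has modulus $N^{-1/p}(1-q^{N-j+1})/(|\mu|-\rho)$. By Bernoulli's inequality, $\norm{(\mu I-\CD)^{-1}x_N}^p\ge(|\mu|-\rho)^{-p}\bigl(1-pq/(N(1-q))\bigr)$, while $\norm{x_N}=1$.

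The paper's route is shorter because the interior eigenvectors are already available. Yours costs an explicit estimate, but it needs only that the boundary circle lies in the approximate point spectrum, so it would also work for shift-type operators without interior eigenvalues. Either argument could be replaced by the general bound $\norm{(\mu I-T)^{-1}}\ge1/\dist(\mu,\sigma(T))$, since identifying $\sigma(\CD)$ as the closed disc does not use \eqref{eq:resolvent-norm}.
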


\begin{proof}
For $f=Ja$, the equation $\CD f=\mu f$ is equivalent to
\[
\rho a_{n+1}=\mu a_n
\qquad(n\ge0).
\]
Thus
\[
a_n=(\mu/\rho)^na_0.
\]
A nonzero solution belongs to $\ell^p$ exactly when
$|\mu|<\rho$. Its function representation is
\[
a_0\sum_{n=0}^{\infty}
\frac{\mu^nt^{\alpha n}}{\Gamma(\alpha n+1)}
=
a_0E_\alpha(\mu t^\alpha).
\]
This proves \eqref{eq:caputo-eigenspace}, and geometric
summation gives \eqref{eq:eigenvector-norm}.

Since the spectrum is closed and contains the point
spectrum, it contains the closed disk of radius $\rho$.
The inclusion
\[
\sigma(\CD)\subset\{\mu:|\mu|\le\norm{\CD}\}
\]
and $\norm{\CD}=\rho$ prove
\eqref{eq:caputo-spectrum}.

If $|\mu|>\rho$, the series in
\eqref{eq:caputo-resolvent} converges because
\[
\sum_{k=0}^{\infty}
|\mu|^{-k-1}\norm{\CD^k}
=
\sum_{k=0}^{\infty}
|\mu|^{-k-1}\rho^k
=
\frac1{|\mu|-\rho}.
\]
Hence
\[
\norm{(\mu I-\CD)^{-1}}
\le
\frac1{|\mu|-\rho}.
\]

For $|\nu|<\rho$, let
$v_\nu=E_\alpha(\nu t^\alpha)$.
Then
\[
(\mu I-\CD)^{-1}v_\nu
=
(\mu-\nu)^{-1}v_\nu,
\]
so
\[
\norm{(\mu I-\CD)^{-1}}
\ge
\frac1{|\mu-\nu|}.
\]
Taking
\[
\nu=r\frac{\mu}{|\mu|},
\qquad r\uparrow\rho,
\]
gives the reverse inequality in
\eqref{eq:resolvent-norm}.
\end{proof}
% ============================================================
\section{Periodic approximation and the chaos threshold}
\label{sec:periodic-chaos}
% ============================================================

Write
\[
A=\lambda\CD,
\qquad
c=\lambda\rho.
\]
For the periodic constructions, assume $|c|>1$.
The final classification covers all $\lambda\in\C$.

% ------------------------------------------------------------
\subsection{Periodic subspaces and approximation}
% ------------------------------------------------------------

The equation $A^mf=f$ determines all coefficients
from one finite block.

\begin{theorem}[Periodic block representation]
\label{thm:periodic}
For every integer $m\ge1$, the map
\begin{equation}
\label{eq:periodic}
\Pi_m:\C^m\longrightarrow\ker(A^m-I),
\qquad
\Pi_md
=
\sum_{k=0}^{\infty}
\sum_{j=0}^{m-1}
c^{-km}d_je_{km+j},
\end{equation}
is a linear isomorphism. Moreover,
\begin{equation}
\label{eq:periodicnorm}
\norm{\Pi_md}_{\X}^p
=
\frac{\displaystyle\sum_{j=0}^{m-1}|d_j|^p}
{1-|c|^{-mp}}.
\end{equation}
In particular,
\begin{equation}
\label{eq:periodic-dimension}
\dim\ker(A^m-I)=m.
\end{equation}

If
\[
a=\sum_{j=0}^{L}d_je_j,
\qquad m>L,
\]
and $d$ is extended by zeros to length $m$, then
\begin{equation}
\label{eq:periodicerror}
\norm{\Pi_md-a}_{\X}
=
\frac{|c|^{-m}}
{(1-|c|^{-mp})^{1/p}}
\norm{a}_{\X}.
\end{equation}
\end{theorem}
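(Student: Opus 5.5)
Everything will be transported to $\ell^p(\Nzero)$ through $J$ and the identity $A^nJ=c^nJB^n$ from \eqref{eq:scaled-shift-identities}. For $f=Ja$, the equation $A^mf=f$ then reads $c^m a_{j+m}=a_j$ for all $j\ge0$. Equivalently, $a_{km+j}=c^{-km}a_j$ for $0\le j\le m-1$ and $k\ge0$. This follows by a direct induction on $k$.

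\emph{Membership and isomorphism.} The recurrence shows that every fixed point of $A^m$ is determined by its first block $d=(a_0,\dots,a_{m-1})$ and has the form $\Pi_md$. Conversely, for any $d\in\C^m$ the sequence $(c^{-km}d_j)$ lies in $\ell^p$ because $|c|>1$. Its norm is computed as a geometric series:
\[
\sum_{k\ge0}\sum_{j<m}|c|^{-kmp}|d_j|^p=\frac{\sum_j|d_j|^p}{1-|c|^{-mp}},
\]
which is \eqref{eq:periodicnorm}. Applying $c^mB^m$ to this sequence shifts $k$ down by one and multiplies by $c^m$, so the sequence is reproduced and $\Pi_md\in\ker(A^m-I)$. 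The map $\Pi_m$ is clearly linear. It is injective because its first block is $d$, and it is surjective by the recurrence. Hence it is an isomorphism, and \eqref{eq:periodic-dimension} follows.

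\emph{The error formula.} Take $a=\sum_{j\le L}d_je_j$ with $m>L$, and pad $d$ with zeros to length $m$. Then $\Pi_md-a$ is exactly the part of $\Pi_md$ with $k\ge1$, and it has coefficients $c^{-km}d_j$ at index $km+j$. Its $p$-th power norm is
\[
\sum_{k\ge1}|c|^{-kmp}\norm{a}^p=\frac{|c|^{-mp}}{1-|c|^{-mp}}\norm{a}^p,
\]
where we use $\norm{a}^p=\sum_j|d_j|^p$. Taking $p$-th roots gives \eqref{eq:periodicerror}.

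No step is a genuine obstacle. The only points needing care are that the blocks $\{km,\dots,km+m-1\}$ for distinct $k$ are disjoint, which is what makes the norm additive, and that $m>L$ guarantees the $k=0$ block reproduces $a$ exactly.
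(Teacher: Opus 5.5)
Your proof is correct and follows essentially the same route as the paper. Both arguments translate $A^mf=f$ into the recurrence $c^ma_{n+m}=a_n$, read off the block form, sum the geometric series over disjoint blocks to get the norm, and obtain the error formula from the blocks with $k\ge1$; your explicit check that $c^mB^m$ reproduces the sequence is a more detailed version of the paper's remark that the coefficients satisfy the periodicity relation.
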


\begin{proof}
For $f=\sum_{n\ge0}a_ne_n$, the coefficient formula
for $A^m$ gives
\[
A^mf=f
\quad\Longleftrightarrow\quad
c^ma_{n+m}=a_n
\qquad(n\ge0).
\]
Thus
\[
a_{km+j}=c^{-km}a_j,
\qquad
k\ge0,\quad 0\le j<m.
\]
Every periodic vector therefore has the form
\eqref{eq:periodic}, with a uniquely determined
initial block.

Conversely, for $d\in\C^m$,
\[
\begin{aligned}
\sum_{k=0}^{\infty}
\sum_{j=0}^{m-1}
|c^{-km}d_j|^p
&=
\left(\sum_{j=0}^{m-1}|d_j|^p\right)
\sum_{k=0}^{\infty}|c|^{-kmp}\\
&=
\frac{\displaystyle\sum_{j=0}^{m-1}|d_j|^p}
{1-|c|^{-mp}}
<\infty.
\end{aligned}
\]
Hence the series defines an element of $\X$.
Its coefficients satisfy the periodicity relation,
so its sum belongs to $\ker(A^m-I)$.
This proves the isomorphism and
\eqref{eq:periodicnorm}.

For the polynomial $a$, the block $k=0$ equals $a$.
The remaining blocks have disjoint coefficient
supports, giving
\[
\begin{aligned}
\norm{\Pi_md-a}_{\X}^p
&=
\sum_{k=1}^{\infty}|c|^{-kmp}
\sum_{j=0}^{L}|d_j|^p\\
&=
\frac{|c|^{-mp}}{1-|c|^{-mp}}
\norm{a}_{\X}^p.
\end{aligned}
\]
Taking the $p$th root proves
\eqref{eq:periodicerror}.
\end{proof}

Repeating the initial coefficient block produces
bounded periodic approximations of arbitrary vectors.

\begin{corollary}[Periodic approximation operators]
\label{cor:arbitrary-periodic}
Let
\[
P_mf=\sum_{j=0}^{m-1}a_je_j,
\qquad
f=\sum_{j=0}^{\infty}a_je_j.
\]
Define
\begin{equation}
\label{eq:periodic-projection}
\mathcal Q_mf
=
\sum_{k=0}^{\infty}
\sum_{j=0}^{m-1}
c^{-km}a_je_{km+j}.
\end{equation}
Then
\begin{equation}
\label{eq:periodic-projection-identities}
\mathcal Q_m^2=\mathcal Q_m,
\qquad
\operatorname{ran}\mathcal Q_m=\ker(A^m-I),
\end{equation}
and
\begin{equation}
\label{eq:periodic-projection-norm}
\norm{\mathcal Q_m}
=
(1-|c|^{-mp})^{-1/p}.
\end{equation}
Furthermore,
\begin{equation}
\label{eq:period-general}
\begin{aligned}
\norm{\mathcal Q_mf-f}_{\X}
&\le
\norm{(I-P_m)f}_{\X}\\
&\quad+
\frac{|c|^{-m}}
{(1-|c|^{-mp})^{1/p}}
\norm{P_mf}_{\X}
\longrightarrow0.
\end{aligned}
\end{equation}
Consequently,
\[
\overline{\Per(A)}=\X.
\]
\end{corollary}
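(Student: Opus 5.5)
The plan is to observe that $\mathcal Q_m=\Pi_m\circ\tau_m$, where $\tau_m f=(a_0,\ldots,a_{m-1})\in\C^m$ records the first $m$ coefficients. Theorem~\ref{thm:periodic} then supplies most of the statement. Since $\Pi_m$ maps onto $\ker(A^m-I)$ and $\tau_m$ is surjective, $\operatorname{ran}\mathcal Q_m=\ker(A^m-I)$. For idempotence, note that the block $k=0$ of $\Pi_md$ is exactly $\sum_{j<m}d_je_j$, so $\tau_m\Pi_m d=d$. Hence $\mathcal Q_m^2=\Pi_m\tau_m\Pi_m\tau_m=\Pi_m\tau_m=\mathcal Q_m$. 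Equivalently, every $f\in\ker(A^m-I)$ is determined by its first block, so $\mathcal Q_mf=f$ there.

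For the operator norm \eqref{eq:periodic-projection-norm}, apply \eqref{eq:periodicnorm} with $d=\tau_mf$. This gives
\[
\norm{\mathcal Q_mf}_{\X}^p=\frac{\norm{P_mf}_{\X}^p}{1-|c|^{-mp}}\le\frac{\norm{f}_{\X}^p}{1-|c|^{-mp}},
\]
because $\norm{P_mf}_{\X}\le\norm{f}_{\X}$ by disjointness of coefficient supports. Equality is attained at $f=e_0$, or at any $f$ with $f=P_mf\ne0$. So the norm is exactly $(1-|c|^{-mp})^{-1/p}$.

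For \eqref{eq:period-general}, split
\[
\mathcal Q_mf-f=(\mathcal Q_mf-P_mf)-(I-P_m)f .
\]
The first difference consists of the blocks $k\ge1$, which is exactly the situation of \eqref{eq:periodicerror} with the polynomial $P_mf$ (taking $L=m-1<m$). Its norm is therefore $|c|^{-m}(1-|c|^{-mp})^{-1/p}\norm{P_mf}_{\X}$, and the triangle inequality gives the bound. For convergence to zero, the tail $\norm{(I-P_m)f}_{\X}\to0$ because $a\in\ell^p$ and $p<\infty$. In the second term, $\norm{P_mf}_{\X}\le\norm f_{\X}$, while $|c|^{-m}\to0$ and $(1-|c|^{-mp})^{-1/p}\to1$ since $|c|>1$.

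Density of $\Per(A)$ then follows: for each $f$, the vectors $\mathcal Q_mf\in\ker(A^m-I)\subset\Per(A)$ converge to $f$. No step is a real obstacle. The only points needing care are the exactness of the norm, which requires a test vector, and the application of \eqref{eq:periodicerror} with $L=m-1$, which needs $m>L$ and holds.
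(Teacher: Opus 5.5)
Your proposal is correct and follows essentially the same route as the paper. Both arguments identify $\mathcal Q_mf$ with $\Pi_m(a_0,\ldots,a_{m-1})$ and get idempotence and $\operatorname{ran}\mathcal Q_m=\ker(A^m-I)$ from the fact that a vector in $\ker(A^m-I)$ is determined by its initial block; both get the operator norm from \eqref{eq:periodicnorm} together with a unit test vector in $P_m\X$; and both prove \eqref{eq:period-general} from the splitting $\mathcal Q_mf-f=(\mathcal Q_mf-P_mf)-(I-P_m)f$, where your appeal to \eqref{eq:periodicerror} with $L=m-1$ is exactly the paper's summation of the repeated blocks.
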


\begin{proof}
Theorem~\ref{thm:periodic} gives
\[
\mathcal Q_mf
=
\Pi_m(a_0,\ldots,a_{m-1})
\in\ker(A^m-I).
\]
Since $\mathcal Q_mf$ has the same initial block
as $f$,
\[
P_m\mathcal Q_mf=P_mf.
\]
A vector in $\ker(A^m-I)$ is determined by that block.
Hence $\mathcal Q_m$ fixes its range, proving
\eqref{eq:periodic-projection-identities}.

Equation \eqref{eq:periodicnorm} yields
\[
\norm{\mathcal Q_mf}_{\X}
=
(1-|c|^{-mp})^{-1/p}\norm{P_mf}_{\X}.
\]
Since $\norm{P_m}=1$, this proves the upper bound
in \eqref{eq:periodic-projection-norm}.
Equality holds for any unit vector in $P_m\X$.

The initial blocks of $\mathcal Q_mf$ and $f$
cancel. Thus the error also has the exact expression
\begin{equation}
\label{eq:periodic-error-coefficients}
\norm{\mathcal Q_mf-f}_{\X}^p
=
\sum_{k=1}^{\infty}
\sum_{j=0}^{m-1}
\left|
c^{-km}a_j-a_{km+j}
\right|^p.
\end{equation}
Alternatively, writing
\[
\mathcal Q_mf-f
=
(\mathcal Q_mf-P_mf)-(I-P_m)f
\]
and summing the repeated blocks gives
\eqref{eq:period-general}.

The coefficient tail tends to zero because
$a\in\ell^p$, while the geometric factor tends
to zero because $|c|>1$.
Therefore $\mathcal Q_mf\to f$.
Each $\mathcal Q_mf$ is periodic, proving density.
\end{proof}

% ------------------------------------------------------------
\subsection{Explicit periods and dynamical classification}
% ------------------------------------------------------------

The periodic blocks admit closed Mittag--Leffler
representations.

\begin{proposition}[Periodic bases and eigenvectors]
\label{prop:periodforms}
For $m\ge1$ and $0\le j<m$, set
\begin{equation}
\label{eq:periodbasis}
q_{m,j}(t)
=
\rho^jt^{\alpha j}
E_{\alpha m,\alpha j+1}
\bigl(\lambda^{-m}t^{\alpha m}\bigr).
\end{equation}
Then $(q_{m,j})_{j=0}^{m-1}$ is a basis of
$\ker(A^m-I)$.

Let
\[
\omega_r=e^{2\pi ir/m},
\qquad
v_{m,r}(t)
=
E_\alpha(\lambda^{-1}\omega_rt^\alpha),
\qquad 0\le r<m.
\]
Then
\begin{equation}
\label{eq:periodic-eigenvectors}
Av_{m,r}=\omega_rv_{m,r},
\end{equation}
and
\begin{equation}
\label{eq:periodic-spectral-decomposition}
\ker(A^m-I)
=
\bigoplus_{r=0}^{m-1}
\spanop\{v_{m,r}\}.
\end{equation}
The two bases are related by
\begin{equation}
\label{eq:periodic-fourier}
q_{m,j}
=
\frac{c^j}{m}
\sum_{r=0}^{m-1}\omega_r^{-j}v_{m,r}.
\end{equation}

In particular,
\begin{equation}
\label{eq:exactperiod}
g_m(t)
=
E_{\alpha m}
\bigl(\lambda^{-m}t^{\alpha m}\bigr)
\end{equation}
has minimal period $m$, and
\begin{equation}
\label{eq:rootfilter}
g_m(t)
=
\frac1m\sum_{r=0}^{m-1}
E_\alpha(\lambda^{-1}\omega_rt^\alpha).
\end{equation}
\end{proposition}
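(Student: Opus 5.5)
Everything here reduces to coefficient identities through $J$, combined with Theorem~\ref{thm:periodic} and Proposition~\ref{prop:spectrum}. The first step is to expand $q_{m,j}$ using Definition~\ref{def:mittag}:
\[
q_{m,j}(t)=\sum_{k\ge0}\frac{\rho^j\lambda^{-km}t^{\alpha(km+j)}}{\Gamma(\alpha(km+j)+1)}.
\]
Rewriting $\rho^j\lambda^{-km}t^{\alpha(km+j)}/\Gamma(\cdots)$ as $\rho^{j}\lambda^{-km}\rho^{-(km+j)}e_{km+j}(t)$ gives
\[
q_{m,j}=\sum_{k\ge0}c^{-km}e_{km+j}.
\]
Hence $q_{m,j}=\Pi_m(\delta_j)$, where $\delta_j$ is the $j$th unit vector of $\C^m$. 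Since $\Pi_m$ is an isomorphism onto $\ker(A^m-I)$ by Theorem~\ref{thm:periodic}, the family $(q_{m,j})_{j<m}$ is a basis. The series converge in $\X$ because $|c|>1$, and the termwise identification with the function series is justified by Lemma~\ref{lem:space}.

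Next come the eigenvectors. Since $|\lambda^{-1}\omega_r|=|\lambda|^{-1}<\rho$, Proposition~\ref{prop:spectrum} gives $v_{m,r}\in\X$ and $\CD v_{m,r}=\lambda^{-1}\omega_r v_{m,r}$. Therefore $Av_{m,r}=\omega_r v_{m,r}$ and $A^mv_{m,r}=v_{m,r}$. In coefficients,
\[
v_{m,r}=\sum_n (\omega_r/c)^n e_n.
\]
The discrete Fourier inversion $\frac1m\sum_r\omega_r^{n-j}=\mathbf 1_{n\equiv j\,(m)}$ then gives
\[
\frac{c^j}{m}\sum_r\omega_r^{-j}v_{m,r}=\sum_{k}c^{j}c^{-(km+j)}e_{km+j}=q_{m,j},
\]
which is \eqref{eq:periodic-fourier}. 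Thus the $m$ vectors $v_{m,r}$ span the $m$-dimensional space $\ker(A^m-I)$. They correspond to the distinct eigenvalues $\omega_r$, so they are linearly independent, and the sum \eqref{eq:periodic-spectral-decomposition} is direct.

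For $g_m$, observe that $g_m=q_{m,0}$ directly from the definitions. Taking $j=0$ in \eqref{eq:periodic-fourier} gives \eqref{eq:rootfilter}. The remaining point, and the main one requiring care, is minimal period $m$. Suppose $A^{d}g_m=g_m$ with $1\le d<m$. The coefficients of $g_m$ are $1$ at index $0$ and $0$ at indices $1,\dots,m-1$. By the relation $c^{d}a_{n+d}=a_n$ from the proof of Theorem~\ref{thm:periodic}, we would get $a_d=c^{-d}a_0\ne0$, contradicting $a_d=0$. So no smaller period exists.

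Equivalently, one can argue through the spectral decomposition: $g_m$ has nonzero component on $v_{m,1}$, whose eigenvalue $\omega_1$ is a primitive $m$th root of unity. I would present the coefficient argument, since it is shorter.
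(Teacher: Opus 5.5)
Your proposal is correct and follows the paper's proof essentially step for step. You identify $q_{m,j}$ with $\Pi_m$ of the $j$th unit vector, obtain the eigenvectors from Proposition~\ref{prop:spectrum}, derive \eqref{eq:periodic-fourier} by the root-of-unity filter, and rule out smaller periods by comparing $e_0$-coefficients through $c^{d}a_{d}=a_0$; your alternative spectral argument for the minimal period, using the primitive root $\omega_1$, is also valid.
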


\begin{proof}
The periodic vector whose initial block is the
$j$th coordinate vector is
\[
\sum_{k=0}^{\infty}c^{-km}e_{km+j}(t).
\]
Substituting the definition of $e_n$ gives
\[
\begin{aligned}
\sum_{k=0}^{\infty}c^{-km}e_{km+j}(t)
&=
\rho^jt^{\alpha j}
\sum_{k=0}^{\infty}
\frac{\lambda^{-km}t^{\alpha km}}
{\Gamma(\alpha km+\alpha j+1)}\\
&=
q_{m,j}(t).
\end{aligned}
\]
Theorem~\ref{thm:periodic} therefore proves the
basis assertion.

Since
\[
\frac{|\lambda^{-1}\omega_r|}{\rho}
=
|c|^{-1}<1,
\]
each $v_{m,r}$ belongs to $\X$.
The eigenvector formula for $\CD$ yields
\eqref{eq:periodic-eigenvectors}.

The root-of-unity identity
\[
\frac1m\sum_{r=0}^{m-1}\omega_r^{n-j}
=
\begin{cases}
1,&n\equiv j\pmod m,\\
0,&n\not\equiv j\pmod m
\end{cases}
\]
gives
\[
\begin{aligned}
\frac{c^j}{m}
\sum_{r=0}^{m-1}\omega_r^{-j}v_{m,r}(t)
&=
c^j\sum_{k=0}^{\infty}
\frac{\lambda^{-(km+j)}t^{\alpha(km+j)}}
{\Gamma(\alpha(km+j)+1)}\\
&=
q_{m,j}(t).
\end{aligned}
\]
This proves \eqref{eq:periodic-fourier}.
The Fourier matrix is invertible, so
$(v_{m,r})_{r=0}^{m-1}$ is also a basis.
Its members have distinct eigenvalues, giving
\eqref{eq:periodic-spectral-decomposition}.

Taking $j=0$ proves \eqref{eq:rootfilter}.
The initial coefficients of $g_m=q_{m,0}$ are
\[
a_0=1,
\qquad
a_1=\cdots=a_{m-1}=0.
\]
For $1\le q<m$, the coefficient of $e_0$ in
$A^qg_m$ is $c^qa_q=0$, whereas that coefficient
in $g_m$ is one. Thus $A^qg_m\ne g_m$.
Since $A^mg_m=g_m$, its minimal period is $m$.
\end{proof}

The approximation can also prescribe every sufficiently
large minimal period.

\begin{corollary}[Approximation with large minimal periods]
\label{cor:large-minimal-periods}
Let
\[
a=\sum_{j=0}^{L}d_je_j,
\qquad
d_L\ne0.
\]
For every integer $m>2L$, the periodic vector
$\Pi_m(d_0,\ldots,d_L,0,\ldots,0)$ has minimal
period $m$.

Consequently, for every nonempty open set $U\subset\X$,
there exists $m_0\ge1$ such that $U$ contains a point
of minimal period $m$ for every $m\ge m_0$.
\end{corollary}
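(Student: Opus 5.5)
The plan is to read the minimal period of $f=\Pi_m(d_0,\ldots,d_L,0,\ldots,0)$ directly from its coefficients, using the periodicity relation from the proof of Theorem~\ref{thm:periodic}. Then density follows from the error formula \eqref{eq:periodicerror}.

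\textbf{Step 1: setup.} Fix $m>2L$ and let $f=\Pi_md$, where $d$ is extended by zeros to length $m$. By \eqref{eq:periodic}, the coefficients of $f$ are
\[
a_{km+j}=c^{-km}d_j \quad (0\le j\le L),
\qquad a_{km+j}=0 \quad (L<j<m).
\]
Since $d_L\ne0$, within each block of length $m$ the nonzero coefficients lie in positions $0,\ldots,L$. The position $L$ is always nonzero, and so is the position $0$ when $d_0\ne0$.

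\textbf{Step 2: minimal period.} Let $q$ be the minimal period. Then $q$ divides $m$, since the set $\{s\ge1:A^sf=f\}$ together with differences of its elements is closed and $A^mf=f$. Suppose $q<m$. By the proof of Theorem~\ref{thm:periodic}, $A^qf=f$ means $c^qa_{n+q}=a_n$ for all $n$. Take $n=L$: then $a_{L+q}=c^{-q}d_L\ne0$. Since $q\le m/2$ and $L<m/2$, we have $L<L+q<m$. Hence $L+q$ lies in the gap $(L,m)$ of the first block, so $a_{L+q}=0$, a contradiction. Thus $q=m$.

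Only $L+q<m$ is needed, and it follows from $q\le m/2$ together with $m>2L$. This is the one point needing care, namely using $q\mid m$ to get $q\le m/2$. Without divisibility one could instead argue with any $q<m$: if $L+q<m$ we get the contradiction above. If $L+q\ge m$, then since $L<m/2$ we have $q>m/2$. Take $n=m-q\ge1$; then $a_{m-q}$ sits in the gap unless $m-q\le L$, and $a_m=c^{-m}d_0$ would be forced to be zero. This is messier, so I would use $q\mid m$, which is standard for periodic points.

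\textbf{Step 3: open sets.} Given a nonempty open set $U$, it contains a ball $B(f_0,\varepsilon)$. By Lemma~\ref{lem:space}, $\Pol$ is dense, so we can pick $a\in\Pol\cap B(f_0,\varepsilon/2)$ with $a\ne0$. If needed, perturb $a$ slightly to ensure this; its top nonzero coefficient then defines $L$ with $d_L\ne0$. By \eqref{eq:periodicerror},
\[
\norm{\Pi_md-a}_{\X}=\frac{|c|^{-m}}{(1-|c|^{-mp})^{1/p}}\,\norm{a}_{\X}\longrightarrow0,
\]
because $|c|>1$. So there is $m_0>2L$ such that $\norm{\Pi_md-a}_{\X}<\varepsilon/2$ for all $m\ge m_0$. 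For each such $m$, $\Pi_md$ lies in $U$ and has minimal period $m$ by Step 2.
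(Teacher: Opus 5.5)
Your proof is correct and matches the paper's argument essentially step for step: $q\mid m$ gives $q\le m/2$, hence $L<L+q<m$, so $a_{L+q}=0$ contradicts $a_{L+q}=c^{-q}d_L\ne0$, and the open-set claim follows by choosing a nonzero fractional polynomial in $U$ and applying \eqref{eq:periodicerror}. Your aside about avoiding divisibility is muddled as written: when $L+q\ge m$, the index $m-q$ lies in $\{0,\ldots,L\}$ rather than in the gap, and $d_0$ may vanish anyway. Since you do not rely on that aside, nothing is lost.
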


\begin{proof}
Let $f_m=\Pi_m(d_0,\ldots,d_L,0,\ldots,0)$,
and denote its minimal period by $q$.
Since $m$ is a period, division of $m$ by $q$
shows that $q$ divides $m$.

If $q<m$, then $q\le m/2$.
Because $L<m/2$, we have
\[
L<L+q<m.
\]
The initial block of $f_m$ therefore satisfies
\[
a_L=d_L\ne0,
\qquad
a_{L+q}=0.
\]
On the other hand, $A^qf_m=f_m$ requires
\[
a_{L+q}=c^{-q}a_L\ne0,
\]
a contradiction. Thus $q=m$.

Now choose a nonzero fractional polynomial $a\in U$.
By \eqref{eq:periodicerror}, $f_m\to a$ as $m\to\infty$.
For all sufficiently large $m$, one has both
$f_m\in U$ and $m>2L$.
\end{proof}

It remains to verify sensitivity and identify the
opposite parameter regime.

\begin{proposition}[Sensitivity]
\label{prop:sensitive}
If $|c|>1$, then $A$ is sensitive with sensitivity
constant $\delta=1$.
\end{proposition}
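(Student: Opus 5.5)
Given $f\in\X$ and $\varepsilon>0$, the plan is to perturb $f$ by a small multiple of a basis vector sitting far out in the coefficient sequence. The backward-shift structure then drags that coefficient down to position $0$ and magnifies it by a factor $|c|^N$. Linearity turns the comparison of two orbits into the orbit of their difference, so sensitivity comes down to producing a small vector with a large iterate.

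The steps are as follows.

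\begin{enumerate}
\item Pick an integer $N\ge1$ with $|c|^{-N}<\varepsilon/2$, which is possible because $|c|>1$.
\item Set $y=f+2|c|^{-N}e_N$. Since $\norm{e_N}_{\X}=1$, this gives $\norm{x-y}_{\X}=2|c|^{-N}<\varepsilon$ with $x=f$.
\item By linearity, $A^Nx-A^Ny=-2|c|^{-N}A^Ne_N$.
\item By the scaled shift identities \eqref{eq:scaled-shift-identities}, or equivalently the iterate formula \eqref{eq:caputo-iterates} multiplied by $\lambda^N$, one has $A^Ne_N=c^Ne_0$.
\item Hence
\[
\norm{A^Nx-A^Ny}_{\X}=2|c|^{-N}|c|^N\norm{e_0}_{\X}=2>1=\delta.
\]
\end{enumerate}

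The only point needing care is the strict inequality $\norm{A^nx-A^ny}>\delta$ demanded in Definition~\ref{def:chaos}. This is why I scale the perturbation by $2$ rather than $1$: the pair $y,N$ then gives the difference exactly $2$, which strictly exceeds $\delta=1$ for every $\varepsilon$.

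Nothing in the argument depends on $f$, and the computation is exact, so I do not expect a real obstacle.
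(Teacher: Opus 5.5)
Your proof is correct and is essentially the paper's argument: both perturb $f$ along a single basis vector $e_n$ and use $A^ne_n=c^ne_0$ to magnify the perturbation. The only difference is cosmetic. The paper fixes the perturbation size at $\varepsilon/2$ and chooses $n$ with $\frac{\varepsilon}{2}|c|^n>1$, whereas you fix the terminal difference at $2$ by scaling the perturbation to $2|c|^{-N}$.
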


\begin{proof}
Fix $f\in\X$ and $\varepsilon>0$.
Choose $n$ such that
\[
\frac{\varepsilon}{2}|c|^n>1,
\]
and set
\[
g=f+\frac{\varepsilon}{2}e_n.
\]
Then
\[
\norm{g-f}_{\X}
=
\frac{\varepsilon}{2}<\varepsilon,
\]
whereas
\[
\norm{A^ng-A^nf}_{\X}
=
\frac{\varepsilon}{2}|c|^n>1.
\]
The constant is independent of $f$ and
$\varepsilon$.
\end{proof}

\begin{theorem}[Sharp dynamical classification]
\label{thm:classification}
For $\lambda\in\C$, the following assertions
are equivalent on $\X$:
\begin{enumerate}
\item $|\lambda|\rho>1$;
\item $\lambda\CD$ is topologically mixing;
\item $\lambda\CD$ is hypercyclic;
\item $\lambda\CD$ is Devaney chaotic.
\end{enumerate}
If $|\lambda|\rho\le1$, then
\begin{equation}
\label{eq:subcritical-stability}
(\lambda\CD)^nf\longrightarrow0
\qquad(f\in\X),
\end{equation}
and
\begin{equation}
\label{eq:subcritical-periodic}
\Per(\lambda\CD)=\{0\}.
\end{equation}
\end{theorem}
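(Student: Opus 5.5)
I would prove the cycle of implications (i)$\Rightarrow$(iv)$\Rightarrow$(iii)$\Rightarrow$(i) and treat (ii) alongside, and then establish the two subcritical assertions. The case $\lambda=0$ is trivial: $A=0$ is neither hypercyclic nor mixing, $A^nf=0$ for $n\ge1$, and $A^mf=f$ forces $f=0$. So I would assume $\lambda\ne0$ throughout and write $c=\lambda\rho$ as in Section~\ref{sec:local-defects}.

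For (i)$\Rightarrow$(ii) and (i)$\Rightarrow$(iii), Corollary~\ref{cor:local-threshold} already gives mixing and hypercyclicity when $|c|>1$. For (i)$\Rightarrow$(iv), Corollary~\ref{cor:arbitrary-periodic} gives $\overline{\Per(A)}=\X$. Proposition~\ref{prop:chaos-characterization} then yields Devaney chaos, and Proposition~\ref{prop:sensitive} confirms sensitivity independently. The implications (iv)$\Rightarrow$(iii) and (ii)$\Rightarrow$(iii) are immediate: Devaney chaos includes transitivity, which is equivalent to hypercyclicity by Theorem~\ref{thm:birkhoff}, and mixing implies transitivity. For (iii)$\Rightarrow$(i), I would argue by contrapositive. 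If $0<|c|\le1$, then \eqref{eq:subcritical-defect} in Corollary~\ref{cor:local-threshold} gives $\Delta_A(0,g)=\tfrac12\norm{g}_{\X}>0$ for $g\ne0$, and Lemma~\ref{lem:defect-characterization} excludes hypercyclicity.

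For the subcritical statements with $|c|\le1$, the iterate formula \eqref{eq:caputo-iterates} gives
\[
\norm{A^nf}_{\X}=|c|^n\norm{(I-P_n)f}_{\X}\le\norm{(I-P_n)f}_{\X}\longrightarrow0,
\]
because the coefficient sequence lies in $\ell^p$ with $p<\infty$. This proves \eqref{eq:subcritical-stability}. For \eqref{eq:subcritical-periodic}, suppose $A^mf=f$. Then $f=A^{km}f$ for all $k$, and letting $k\to\infty$ in \eqref{eq:subcritical-stability} forces $f=0$.

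There is no serious obstacle here, since every ingredient is an earlier result. The only point needing care is the boundary case $|c|=1$. There the norms $\norm{A^n}=1$ do not decay, so the convergence to zero must come from the vanishing coefficient tails and not from operator norms; this works only because $p<\infty$.
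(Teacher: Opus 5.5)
Your proposal is correct and follows the paper's proof essentially step for step. Condition (i) yields mixing, hypercyclicity and chaos via Corollaries~\ref{cor:local-threshold} and~\ref{cor:arbitrary-periodic}, the reverse implications pass through transitivity, and the subcritical stability and the triviality of $\Per(\lambda\CD)$ come from the same tail estimate $|c|^{np}\sum_{j\ge n}|a_j|^p\le\sum_{j\ge n}|a_j|^p$. The only difference is in (iii)$\Rightarrow$(i): you cite the positive subcritical defect $\Delta_A(0,g)=\tfrac12\norm{g}_{\X}$, whereas the paper argues that every orbit is bounded (indeed null) and so cannot be dense; both arguments are valid.
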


\begin{proof}
For $|\lambda|\rho>1$,
Corollary~\ref{cor:local-threshold} gives mixing
and hypercyclicity.
Corollary~\ref{cor:arbitrary-periodic} gives
dense periodic points, and
Proposition~\ref{prop:sensitive} gives sensitivity.
Hence \textup{(i)} implies \textup{(ii)},
\textup{(iii)} and \textup{(iv)}.

Mixing implies transitivity, and Birkhoff's theorem
gives \textup{(ii)}$\Rightarrow$\textup{(iii)}.
Likewise, Devaney chaos includes transitivity,
so \textup{(iv)} implies \textup{(iii)}.

Suppose $0<|c|\le1$ and write $f=Ja$.
The coefficient formula gives
\[
\begin{aligned}
\norm{A^nf}_{\X}^p
&=
|c|^{np}\sum_{j=n}^{\infty}|a_j|^p\\
&\le
\sum_{j=n}^{\infty}|a_j|^p
\longrightarrow0.
\end{aligned}
\]
If $\lambda=0$, then $A^nf=0$ for $n\ge1$.
Thus \eqref{eq:subcritical-stability} holds
throughout the regime $|\lambda|\rho\le1$.

Every orbit is then bounded and cannot be dense
in the unbounded space $\X$.
This proves \textup{(iii)}$\Rightarrow$\textup{(i)}.

Finally, if $A^mf=f$ for some $m\ge1$, then
\[
A^{km}f=f
\qquad(k\ge1).
\]
Under \eqref{eq:subcritical-stability}, the left-hand
side tends to zero. Hence $f=0$, proving
\eqref{eq:subcritical-periodic}.
\end{proof}

% ============================================================
\section{Disjoint mixing of powers}
\label{sec:disjoint-powers}
% ============================================================

Let
\[
A=\lambda\CD,
\qquad
c=\lambda\rho,
\qquad
1\le r_1<\cdots<r_N,
\qquad N\ge2.
\]
Distinct powers can be controlled simultaneously
by placing the target coefficients in disjoint blocks.

\begin{proposition}[Simultaneous approximation estimates]
\label{prop:simultaneous-approximation}
Assume $|c|>1$ and set
\[
d=\min\{r_1,r_2-r_1,\ldots,r_N-r_{N-1}\},
\qquad
L_n=dn.
\]
For $f,g_1,\ldots,g_N\in\X$ and $n\ge1$, define
\begin{equation}
\label{eq:simultaneous-vector}
z_n
=
P_{L_n}f
+
\sum_{j=1}^{N}
S_\lambda^{r_jn}P_{L_n}g_j.
\end{equation}
Then
\begin{equation}
\label{eq:simultaneous-images}
A^{r_in}z_n
=
P_{L_n}g_i
+
\sum_{j=i+1}^{N}
S_\lambda^{(r_j-r_i)n}P_{L_n}g_j,
\qquad 1\le i\le N.
\end{equation}

The coefficient blocks satisfy
\begin{equation}
\label{eq:simultaneous-initial-cost}
\norm{z_n-P_{L_n}f}_{\X}^p
=
\sum_{j=1}^{N}
|c|^{-r_jnp}
\norm{P_{L_n}g_j}_{\X}^p
\end{equation}
and
\begin{equation}
\label{eq:simultaneous-target-cost}
\norm{A^{r_in}z_n-P_{L_n}g_i}_{\X}^p
=
\sum_{j=i+1}^{N}
|c|^{-(r_j-r_i)np}
\norm{P_{L_n}g_j}_{\X}^p.
\end{equation}
Consequently,
\[
z_n\longrightarrow f,
\qquad
A^{r_in}z_n\longrightarrow g_i
\quad(1\le i\le N).
\]
\end{proposition}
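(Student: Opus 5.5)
Everything reduces to coefficient bookkeeping through the conjugacies \eqref{eq:scaled-shift-identities}. We have $A^{m}=c^{m}JB^{m}J^{-1}$ and $S_\lambda^{m}=c^{-m}JF^{m}J^{-1}$. Hence $A^{m}S_\lambda^{k}$ equals $S_\lambda^{k-m}$ when $k\ge m$, and equals $A^{m-k}$ when $k<m$. Also $A^{m}P_{L}=0$ whenever $m\ge L$, since $\ker A^m=P_m\X$ contains $P_L\X$ for $L\le m$.

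\textbf{Step 1: the images.} Apply $A^{r_in}$ to each term of \eqref{eq:simultaneous-vector}.
\begin{itemize}
\item Since $r_in\ge r_1n\ge dn=L_n$, the term $A^{r_in}P_{L_n}f$ vanishes.
\item For $j>i$, we get $A^{r_in}S_\lambda^{r_jn}=S_\lambda^{(r_j-r_i)n}$.
\item For $j=i$, the term is just $P_{L_n}g_i$.
\item For $j<i$, we get $A^{r_in}S_\lambda^{r_jn}P_{L_n}g_j=A^{(r_i-r_j)n}P_{L_n}g_j$. This is zero because $(r_i-r_j)n\ge dn=L_n$.
\end{itemize}
These four facts give \eqref{eq:simultaneous-images}. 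This is exactly where the choice of $d$ (including the constraint $d\le r_1$) is used.

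\textbf{Step 2: the norms, via disjoint supports.} In coefficients, $S_\lambda^{k n}P_{L_n}g_j$ equals $c^{-kn}$ times the block $(g_j)_0,\dots,(g_j)_{L_n-1}$ moved to indices $[kn,kn+L_n)$.

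For \eqref{eq:simultaneous-initial-cost}, the blocks sit at positions $[r_jn,r_jn+L_n)$ for $j=1,\dots,N$. They are pairwise disjoint because $r_{j+1}n-r_jn\ge dn=L_n$. Hence the $p$th power of the norm is the sum of the $p$th powers of the block norms. The shift $F$ is an isometry, which gives the factors $|c|^{-r_jnp}\norm{P_{L_n}g_j}^p$.

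For \eqref{eq:simultaneous-target-cost}, the same argument applies to the blocks at $[(r_j-r_i)n,(r_j-r_i)n+L_n)$ for $j>i$. They are again separated by gaps at least $L_n$, so they are disjoint.

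The main point to check carefully is this disjointness; in both families it reduces to consecutive differences being at least $d$. The case $p=1$ causes no difficulty, since additivity over disjoint supports holds for every $p<\infty$.

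\textbf{Step 3: convergence.} Bound $\norm{P_{L_n}g_j}\le\norm{g_j}$. Then the right-hand sides of \eqref{eq:simultaneous-initial-cost} and \eqref{eq:simultaneous-target-cost} are at most $\sum_j|c|^{-dnp}\norm{g_j}^p\to0$, because $|c|>1$ and every exponent is at least $dn$.

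Also $P_{L_n}f\to f$ and $P_{L_n}g_i\to g_i$, because $L_n\to\infty$ and $\norm{(I-P_L)h}\to0$ for $h\in\X$. The triangle inequality then gives $z_n\to f$ and $A^{r_in}z_n\to g_i$.
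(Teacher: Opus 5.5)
Your proposal is correct and follows essentially the same route as the paper. The images come from the right-inverse identities for $A^{u}S_\lambda^{v}$ together with $A^{q}P_{L_n}=0$ for $q\ge L_n$. The exact cost formulas come from disjointness of the shifted coefficient blocks, whose gaps are at least $L_n=dn$, and the triangle inequality with $|c|>1$ then gives the convergence.
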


\begin{proof}
The coefficients of $P_{L_n}f$ are supported in
$\{0,\ldots,L_n-1\}$.
Those of
$S_\lambda^{r_jn}P_{L_n}g_j$
are supported in
\[
\{r_jn,\ldots,r_jn+L_n-1\}.
\]
These intervals are pairwise disjoint because
\[
r_1n\ge L_n,
\qquad
(r_{j+1}-r_j)n\ge L_n.
\]
Using
\[
\norm{S_\lambda^mu}_{\X}
=
|c|^{-m}\norm{u}_{\X},
\]
the disjoint supports give
\eqref{eq:simultaneous-initial-cost}.

The right-inverse identities imply
\[
A^uS_\lambda^v
=
\begin{cases}
A^{u-v},&u\ge v,\\
S_\lambda^{v-u},&u<v.
\end{cases}
\]
Moreover,
\[
A^qP_{L_n}=0
\qquad(q\ge L_n).
\]
Thus
\[
A^{r_in}P_{L_n}f=0.
\]
For $j<i$,
\[
(r_i-r_j)n\ge L_n,
\]
so
\[
A^{r_in}S_\lambda^{r_jn}P_{L_n}g_j
=
A^{(r_i-r_j)n}P_{L_n}g_j
=
0.
\]
The term $j=i$ equals $P_{L_n}g_i$, and the
remaining terms give
\eqref{eq:simultaneous-images}.

For fixed $i$, the remaining coefficient blocks
start at $(r_j-r_i)n$, $j>i$, and are pairwise
disjoint. This proves
\eqref{eq:simultaneous-target-cost}.

Finally,
\[
\begin{aligned}
\norm{z_n-f}_{\X}
&\le
\norm{(I-P_{L_n})f}_{\X}\\
&\quad+
\left(
\sum_{j=1}^{N}
|c|^{-r_jnp}\norm{P_{L_n}g_j}_{\X}^p
\right)^{1/p},
\end{aligned}
\]
and
\[
\begin{aligned}
\norm{A^{r_in}z_n-g_i}_{\X}
&\le
\norm{(I-P_{L_n})g_i}_{\X}\\
&\quad+
\left(
\sum_{j=i+1}^{N}
|c|^{-(r_j-r_i)np}
\norm{P_{L_n}g_j}_{\X}^p
\right)^{1/p}.
\end{aligned}
\]
The coefficient tails vanish as $L_n\to\infty$.
The remaining terms tend to zero because $|c|>1$.
\end{proof}

These estimates yield a quantitative version of
simultaneous local approximation.

\begin{corollary}[Simultaneous local defect]
\label{cor:disjoint-defect}
Assume $|c|>1$. For $f,g_1,\ldots,g_N\in\X$, define
\[
\begin{aligned}
\mathfrak D_n(f;g_1,\ldots,g_N)
=
\inf_{h\in\X}
\max\Bigl\{
\norm{h}_{\X},
\max_{1\le i\le N}
\norm{A^{r_in}(f+h)-g_i}_{\X}
\Bigr\}.
\end{aligned}
\]
Set
\[
G=
\left(
\sum_{j=1}^{N}\norm{g_j}_{\X}^p
\right)^{1/p},
\]
and
\[
\tau_L
=
\max\left\{
\norm{(I-P_L)f}_{\X},
\max_{1\le i\le N}
\norm{(I-P_L)g_i}_{\X}
\right\}.
\]
Then
\begin{equation}
\label{eq:disjoint-defect-bound}
\mathfrak D_n(f;g_1,\ldots,g_N)
\le
\tau_{L_n}+|c|^{-L_n}G
\longrightarrow0.
\end{equation}
\end{corollary}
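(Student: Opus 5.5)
The bound will follow by testing the infimum defining $\mathfrak D_n$ with one explicit perturbation, namely $h=z_n-f$, where $z_n$ is the vector \eqref{eq:simultaneous-vector} of Proposition~\ref{prop:simultaneous-approximation}. With this choice $f+h=z_n$, so
\[
\mathfrak D_n(f;g_1,\ldots,g_N)
\le
\max\Bigl\{\norm{z_n-f}_{\X},\ \max_{1\le i\le N}\norm{A^{r_in}z_n-g_i}_{\X}\Bigr\}.
\]
It then suffices to bound each of these $N+1$ quantities by $\tau_{L_n}+|c|^{-L_n}G$.

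For the initial displacement, I split $z_n-f=(z_n-P_{L_n}f)-(I-P_{L_n})f$. The second piece has norm at most $\tau_{L_n}$. For the first piece I use the exact identity \eqref{eq:simultaneous-initial-cost}. Since $r_j\ge r_1\ge d$, every exponent satisfies $r_jn\ge dn=L_n$, and because $|c|>1$ this gives $|c|^{-r_jnp}\le|c|^{-L_np}$. Combined with $\norm{P_{L_n}g_j}_{\X}\le\norm{g_j}_{\X}$ (since $\norm{P_L}=1$), this bounds $\norm{z_n-P_{L_n}f}_{\X}$ by $|c|^{-L_n}G$.

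For the $i$th terminal error, I split $A^{r_in}z_n-g_i=(A^{r_in}z_n-P_{L_n}g_i)-(I-P_{L_n})g_i$. The second piece is again at most $\tau_{L_n}$. For the first piece I use \eqref{eq:simultaneous-target-cost}. Here each index $j>i$ has gap $(r_j-r_i)n\ge(r_{i+1}-r_i)n\ge dn=L_n$, so the same comparison gives a bound by $|c|^{-L_n}\bigl(\sum_{j>i}\norm{g_j}_{\X}^p\bigr)^{1/p}\le|c|^{-L_n}G$. Taking the maximum over all the terms yields \eqref{eq:disjoint-defect-bound}.

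For the convergence, $\tau_L$ is a maximum of finitely many $\ell^p$ coefficient tails, so $\tau_L\to0$ as $L\to\infty$. Since $d\ge1$, we have $L_n=dn\to\infty$. Finally $|c|^{-L_n}G\to0$ because $|c|>1$. There is no real obstacle here. The only point needing care is checking that \emph{every} exponent appearing in the two cost identities is at least $L_n$, which is exactly what the definition of $d$ as the minimum of $r_1$ and the consecutive gaps guarantees.
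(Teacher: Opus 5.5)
Your proposal is correct and takes essentially the same approach as the paper: test the infimum with $h=z_n-f$, bound each geometric term in \eqref{eq:simultaneous-initial-cost} and \eqref{eq:simultaneous-target-cost} by $|c|^{-L_n}G$ using $r_jn\ge L_n$ and $(r_j-r_i)n\ge L_n$, and add the coefficient tails via the triangle inequality. You also spell out steps the paper leaves implicit, namely $\norm{P_{L_n}g_j}_{\X}\le\norm{g_j}_{\X}$ and the fact that $d\ge1$ forces $L_n\to\infty$.
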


\begin{proof}
Use $h=z_n-f$, with $z_n$ from
\eqref{eq:simultaneous-vector}.
Since
\[
r_jn\ge L_n,
\qquad
(r_j-r_i)n\ge L_n
\quad(j>i),
\]
each geometric term in the estimates of
Proposition~\ref{prop:simultaneous-approximation}
is bounded by $|c|^{-L_n}G$.
Taking the maximum gives
\eqref{eq:disjoint-defect-bound}.
\end{proof}

We can now identify the exact parameter range.

\begin{theorem}[Sharp disjoint dynamical threshold]
\label{thm:disjoint}
For $\lambda\in\C$, the following assertions
are equivalent:
\begin{enumerate}
\item $|\lambda|\rho>1$;
\item $(A^{r_1},\ldots,A^{r_N})$ is disjoint mixing;
\item $(A^{r_1},\ldots,A^{r_N})$ is disjoint hypercyclic.
\end{enumerate}
When these conditions hold, the disjoint hypercyclic
vectors form a dense $G_\delta$ subset of $\X$.
\end{theorem}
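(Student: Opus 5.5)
The proof runs through the cycle (i)$\Rightarrow$(ii)$\Rightarrow$(iii)$\Rightarrow$(i), and then adds the $G_\delta$ assertion.

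\textbf{Step (i)$\Rightarrow$(ii).} Assume $|c|=|\lambda|\rho>1$. Fix nonempty open sets $U_0,\ldots,U_N$. Shrink each $U_j$ to a ball $B(g_j,\varepsilon)$ of common radius $\varepsilon>0$, writing $f=g_0$ for the center of the ball in $U_0$. Corollary~\ref{cor:disjoint-defect} gives
\[
\mathfrak D_n(f;g_1,\ldots,g_N)\le\tau_{L_n}+|c|^{-L_n}G\longrightarrow0 .
\]
Here $L_n=dn\to\infty$, and $\tau_L\to0$ because these are $\ell^p$ coefficient tails. So there is an $n_0$ such that, for every $n\ge n_0$, some $h$ satisfies $\norm{h}_{\X}<\varepsilon$ and $\norm{A^{r_in}(f+h)-g_i}_{\X}<\varepsilon$ for all $i$. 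Concretely one may take $h=z_n-f$ with $z_n$ from \eqref{eq:simultaneous-vector}. Then $z_n\in U_0\cap\bigcap_i A^{-r_in}(U_i)$ for all $n\ge n_0$. The definition of disjoint mixing asks for $A_j^{-n}(U_j)$ with $A_j=A^{r_j}$, and $A_j^{-n}=A^{-r_jn}$, so the indexing matches exactly and no further step is needed.

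\textbf{Step (ii)$\Rightarrow$(iii) and the $G_\delta$ assertion.} This is the standard Baire argument, done directly. Let $(V_k)$ be a countable base of $\X$; one exists because $\X$ is separable by Lemma~\ref{lem:space}. The set of disjoint hypercyclic vectors equals
\[
\bigcap_{k_1,\ldots,k_N}\ \bigcup_{n\ge0}\ \bigcap_{i=1}^{N}A^{-r_in}(V_{k_i}).
\]
This is $G_\delta$ because each $A^{r_in}$ is continuous, and the products $V_{k_1}\times\cdots\times V_{k_N}$ form a base of $X^N$. Each inner union is open. It is also dense: disjoint mixing applied to $U_0$ arbitrary and $U_i=V_{k_i}$ shows that the union meets every $U_0$. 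Baire's theorem then gives a dense $G_\delta$ set, which is in particular nonempty, and this yields (iii).

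\textbf{Step (iii)$\Rightarrow$(i).} Argue by contraposition. Suppose $|\lambda|\rho\le1$. Theorem~\ref{thm:classification}, via \eqref{eq:subcritical-stability}, gives $A^nx\to0$ for every $x$, and this also covers $\lambda=0$. Then the first coordinate orbit $\{A^{r_1n}x:n\ge0\}$ is bounded. Its closure cannot be $\X$, so the tuple orbit cannot be dense in $X^N$. It suffices to project onto the first coordinate.

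The main obstacle is only bookkeeping, and it has already been absorbed by Proposition~\ref{prop:simultaneous-approximation}. The point is that the separated blocks remain disjoint for all large $n$ simultaneously, which is what produces mixing along every $n$ rather than along a subsequence. The only care needed in writing the proof is matching the index convention in Definition~\ref{def:disjoint} and handling the case $\lambda=0$ separately in the contraposition.
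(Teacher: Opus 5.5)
Your proposal is correct and follows essentially the same route as the paper. The implication (i)$\Rightarrow$(ii) uses the separated-block vectors $z_n$; you pass through Corollary~\ref{cor:disjoint-defect}, while the paper invokes Proposition~\ref{prop:simultaneous-approximation} directly. Step (ii)$\Rightarrow$(iii), together with the residuality claim, is the same Baire argument over a countable base. Step (iii)$\Rightarrow$(i) is the same first-coordinate projection combined with the subcritical stability in Theorem~\ref{thm:classification}. The remaining differences are cosmetic: you use products of a base of $\X$ instead of a base of ${\X}^N$, and common-radius balls instead of arbitrary open sets.
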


\begin{proof}
Assume \textup{(i)}.
Let $U_0,\ldots,U_N$ be nonempty open subsets of $\X$.
Choose
\[
f\in U_0,
\qquad
g_i\in U_i
\quad(1\le i\le N).
\]
By Proposition~\ref{prop:simultaneous-approximation},
\[
z_n\longrightarrow f,
\qquad
A^{r_in}z_n\longrightarrow g_i.
\]
Since only finitely many target sets are involved,
there exists $n_0$ such that
\[
z_n\in U_0,
\qquad
A^{r_in}z_n\in U_i
\quad(1\le i\le N,\ n\ge n_0).
\]
Hence
\[
U_0\cap
\bigcap_{i=1}^{N}
(A^{r_i})^{-n}(U_i)
\ne\varnothing
\qquad(n\ge n_0).
\]
This proves \textup{(ii)}.

Assume \textup{(ii)} and define
\[
\Phi_n:\X\longrightarrow {\X}^{N},
\qquad
\Phi_nx=(A^{r_1n}x,\ldots,A^{r_Nn}x).
\]
Let $(W_\ell)_{\ell\ge1}$ be a countable base
of nonempty open subsets of ${\X}^{N}$. Set
\[
H_\ell
=
\bigcup_{n\ge0}\Phi_n^{-1}(W_\ell).
\]
Each $H_\ell$ is open.

To prove density, take a nonempty open set
$U\subset\X$. Choose nonempty open sets
$V_1,\ldots,V_N\subset\X$ such that
\[
V_1\times\cdots\times V_N\subset W_\ell.
\]
Disjoint mixing gives, for all sufficiently large $n$,
\[
U\cap\bigcap_{i=1}^{N}
(A^{r_i})^{-n}(V_i)
\ne\varnothing.
\]
Thus $U\cap H_\ell\ne\varnothing$.

By Baire's theorem,
\[
H=\bigcap_{\ell\ge1}H_\ell
\]
is a dense $G_\delta$ subset of $\X$.
Moreover,
\[
x\in H
\quad\Longleftrightarrow\quad
\overline{\{\Phi_nx:n\ge0\}}={\X}^N.
\]
Therefore $H$ is precisely the set of disjoint
hypercyclic vectors. This proves \textup{(iii)}
and the residuality assertion.

Finally, suppose \textup{(iii)} holds.
The first coordinate projection shows that
$A^{r_1}$ is hypercyclic.

If $|\lambda|\rho\le1$, the stability conclusion
of Theorem~\ref{thm:classification} gives
\[
A^{r_1n}x\longrightarrow0
\qquad(x\in\X).
\]
No such orbit is dense in $\X$.
Hence $|\lambda|\rho>1$, proving
\textup{(iii)}$\Rightarrow$\textup{(i)}.
\end{proof}
% ============================================================
\section{Banach scales and comparison with a
Fr\'echet realization}
\label{sec:banach-scales}
% ============================================================

The radius parameter determines both the coefficient norm
and the dynamical threshold. We first compare the Banach
realizations, then embed them into the
Mittag--Leffler--Caputo space.

% ------------------------------------------------------------
\subsection{Compact inclusions and optimal truncations}
% ------------------------------------------------------------

\begin{proposition}[Compact Banach scale]
\label{prop:scale}
Fix $\alpha\in(0,1)$, $1\le p<\infty$ and
$0<\rho_1<\rho_2$. Set $\theta=\rho_1/\rho_2$.
The natural inclusion
\[
\iota:
X_{\alpha,\rho_1}^{p}
\longrightarrow
X_{\alpha,\rho_2}^{p}
\]
is injective, compact, and has dense proper range.
Moreover,
\begin{equation}
\label{eq:scale-norm}
\norm{\iota}=1,
\qquad
\norm{\iota-\iota P_N}=\theta^N
\qquad(N\ge0).
\end{equation}

These truncations are optimal:
\begin{equation}
\label{eq:optimal-scale-approximation}
\inf_{\operatorname{rank}R\le N}
\norm{\iota-R}
=
\theta^N,
\end{equation}
where
$R:X_{\alpha,\rho_1}^{p}\to X_{\alpha,\rho_2}^{p}$
ranges over bounded linear operators.
\end{proposition}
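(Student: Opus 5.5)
We work entirely in coefficients. If $f=\sum a_n e_n^{(\rho_1)}$ with $e_n^{(\rho)}(t)=\rho^n t^{\alpha n}/\Gamma(\alpha n+1)$, then
\[
e_n^{(\rho_1)}=\theta^n e_n^{(\rho_2)}.
\]
Hence $f=\sum \theta^n a_n e_n^{(\rho_2)}$, and this series converges in $X_{\alpha,\rho_2}^p$ since $(\theta^na_n)\in\ell^p$. By the uniqueness in Lemma~\ref{lem:space}, the pointwise function is the same. So under the isometries $J_{\alpha,\rho_1}$ and $J_{\alpha,\rho_2}$, the inclusion $\iota$ becomes the diagonal operator
\[
D_\theta(a_n)=(\theta^na_n)
\]
on $\ell^p(\Nzero)$. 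All assertions therefore reduce to statements about $D_\theta$.

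\emph{Injectivity, density and properness.} Injectivity is clear because $\theta^n\ne0$. The range contains all $e_n^{(\rho_2)}$, and these span the dense set $\Pol$. The range is proper: take $b_n=\theta^{n/2}$, which lies in $\ell^p$, but $b_n\theta^{-n}=\theta^{-n/2}$ is not in $\ell^p$. Properness also follows from compactness, since a compact operator with closed range between infinite-dimensional spaces cannot be onto.

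\emph{Norm identities and compactness.} $\iota-\iota P_N$ corresponds to the diagonal operator with entries $0$ for $n<N$ and $\theta^n$ for $n\ge N$. A diagonal operator on $\ell^p$ has norm equal to the supremum of its entries, which here is $\theta^N$, attained at $e_N$. Taking $N=0$ gives $\norm{\iota}=1$. Since $\theta^N\to0$, $\iota$ is a norm limit of the finite-rank operators $\iota P_N$, hence compact.

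\emph{Optimality.} The upper bound in \eqref{eq:optimal-scale-approximation} comes from $R=\iota P_N$, which has rank $N$. For the lower bound, we expect this step to be the main, though standard, obstacle. Let $R$ have rank at most $N$ and consider
\[
E=\spanop\{e_0,\ldots,e_N\}\subset X_{\alpha,\rho_1}^p,
\]
which has dimension $N+1$. Then $\ker R\cap E$ contains a unit vector $x=\sum_{n\le N}x_ne_n$. We get
\[
\norm{(\iota-R)x}=\norm{\iota x}=\Bigl(\sum_{n\le N}\theta^{np}|x_n|^p\Bigr)^{1/p}\ge\theta^N,
\]
because $\theta^n\ge\theta^N$ for $n\le N$. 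Hence $\norm{\iota-R}\ge\theta^N$. This is the Bernstein-number argument, and for diagonal operators it gives matching bounds in every $\ell^p$, $1\le p<\infty$. No Hilbert-space structure is needed.
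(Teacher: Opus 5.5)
Your proof is correct and follows the paper's argument essentially step for step: you reduce $\iota$ to the diagonal operator $D_\theta$, compute the truncation norms as $\sup_{n\ge N}\theta^n=\theta^N$, and prove the optimality lower bound by choosing a unit vector in $\ker R\cap\spanop\{e_0,\ldots,e_N\}$. Your properness witness $b_n=\theta^{n/2}$ is exactly the paper's $E_\alpha(\mu t^\alpha)$ with $\mu=\sqrt{\rho_1\rho_2}\in[\rho_1,\rho_2)$, and your alternative argument via compactness and the open mapping theorem is also valid.
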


\begin{proof}
Write $J_i=J_{\alpha,\rho_i}$.
Uniqueness of coefficients gives
\[
J_2^{-1}\iota J_1(a_n)_{n\ge0}
=
(\theta^na_n)_{n\ge0}.
\]
Thus the inclusion is represented by the diagonal
operator
\[
D_\theta(a_n)=(\theta^na_n).
\]
Its norm is one, attained at the zeroth coordinate.
Likewise,
\[
\norm{D_\theta-D_\theta\pi_N}
=
\sup_{n\ge N}\theta^n
=
\theta^N,
\]
where $\pi_N$ truncates to the first $N$ coordinates.
This proves \eqref{eq:scale-norm} and compactness.

For optimality, let $R$ have rank at most $N$ and
consider the $(N+1)$-dimensional space
\[
E_N=\spanop\{e_0,\ldots,e_N\}
\subset X_{\alpha,\rho_1}^{p}.
\]
There exists $u\in E_N\cap\ker R$ with
$\norm{u}_{X_{\alpha,\rho_1}^{p}}=1$.
Since $\theta^j\ge\theta^N$ for $0\le j\le N$,
\[
\norm{(\iota-R)u}_{X_{\alpha,\rho_2}^{p}}
=
\norm{\iota u}_{X_{\alpha,\rho_2}^{p}}
\ge\theta^N.
\]
Together with the truncation upper bound, this proves
\eqref{eq:optimal-scale-approximation}.

Fractional polynomials belong to the range and are
dense in the target space.
To prove properness, choose $\mu$ with
\[
\rho_1\le|\mu|<\rho_2.
\]
The coefficient sequence of $E_\alpha(\mu t^\alpha)$
in the $\rho_i$ basis is $((\mu/\rho_i)^n)_{n\ge0}$.
Hence
\[
E_\alpha(\mu t^\alpha)
\in
X_{\alpha,\rho_2}^{p}
\setminus X_{\alpha,\rho_1}^{p}.
\]
\end{proof}

% ------------------------------------------------------------
\subsection{Embedding into the Mittag--Leffler--Caputo space}
% ------------------------------------------------------------

Let
\[
\Omega=\C\setminus(-\infty,0]
\]
and use the principal branch of $z^\alpha$.
Following \cite[Definition 10]{MPV}, consider
\[
\F
=
\left\{
f(z)=\sum_{n=0}^{\infty}b_nz^{\alpha n}:
Q_k(f)<\infty\text{ for every }k\ge0
\right\},
\]
where
\begin{equation}
\label{eq:qk}
Q_k(f)^2
=
\sum_{j=0}^{\infty}
\frac{\Gamma(\alpha(j+k)+1)^2}
{\Gamma(\alpha j+1)}
|b_{j+k}|^2.
\end{equation}
Introduce the normalized coefficients
\[
d_n=\Gamma(\alpha n+1)b_n.
\]
Then
\begin{equation}
\label{eq:normalized-graph-seminorms}
Q_k(f)^2
=
\sum_{n=k}^{\infty}
\frac{|d_n|^2}{\Gamma(\alpha(n-k)+1)}.
\end{equation}

We first specify the topology and the analytic meaning
of these series.

\begin{lemma}[Graph norms and local regularity]
\label{lem:graph}
The space $\F$ is a separable Fr\'echet space for the
increasing norms
\begin{equation}
\label{eq:graph-norms}
\mathfrak P_k(f)
=
\left(
\sum_{i=0}^{k}Q_i(f)^2
\right)^{1/2},
\qquad k\ge0.
\end{equation}
The same topology is generated by
\begin{equation}
\label{eq:alternative-graph-norms}
\widetilde{\mathfrak P}_k(f)
=
\left(
\sum_{n=0}^{k-1}|d_n|^2+Q_k(f)^2
\right)^{1/2},
\end{equation}
where the initial sum is zero when $k=0$.
This topology agrees with that of
\cite[Definition 10]{MPV}.

Every $f\in\F$ is holomorphic on $\Omega$, and
\begin{equation}
\label{eq:frechet-local-bound}
\sup_{\substack{z\in\Omega\\|z|\le R}}|f(z)|
\le
Q_0(f)\,E_\alpha(R^{2\alpha})^{1/2}
\qquad(R>0).
\end{equation}
Its restriction to the positive axis extends to
an element of $AC[0,R]$ with $f(0)=b_0$.
\end{lemma}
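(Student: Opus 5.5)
The plan is to reduce everything to a weighted sequence space. In the normalized coefficients $d_n=\Gamma(\alpha n+1)b_n$, formula \eqref{eq:normalized-graph-seminorms} exhibits each $Q_k$ as a weighted $\ell^2$ norm of the tail $(d_n)_{n\ge k}$. The weights are $w_{n,k}=\Gamma(\alpha(n-k)+1)^{-1}$. Consequently $\mathfrak P_k(f)^2=\sum_n \sigma_k(n)|d_n|^2$, where
\[
\sigma_k(n)=\sum_{i=0}^{\min(k,n)}\Gamma(\alpha(n-i)+1)^{-1}.
\]
Thus $\F$ is identified with the Köthe echelon space $\lambda^2(\sigma_k)$, whose weights increase in $k$. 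Köthe echelon spaces of order $2$ are Fréchet and separable, because finitely supported sequences are dense.

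I must check that the map $f\mapsto (d_n)$ is well defined and injective on $\F$. The condition $Q_0(f)<\infty$ gives $\sum |d_n|^2/\Gamma(\alpha n+1)<\infty$, so $b_n=d_n/\Gamma(\alpha n+1)$ satisfies
\[
|b_n|\le Q_0(f)\,\Gamma(\alpha n+1)^{-1/2}.
\]
The Stirling argument of Lemma~\ref{lem:space} then shows that $\sum b_n w^n$ is entire. Hence $f(z)=H(z^\alpha)$ is holomorphic on $\Omega$, and the coefficients are unique by the identity theorem. For \eqref{eq:frechet-local-bound} I will use Cauchy--Schwarz:
\[
|f(z)|\le \Bigl(\sum |d_n|^2/\Gamma(\alpha n+1)\Bigr)^{1/2}\Bigl(\sum |z|^{2\alpha n}/\Gamma(\alpha n+1)\Bigr)^{1/2}.
\]
Since $|z|^{2\alpha n}\le R^{2\alpha n}$, this gives $Q_0(f)E_\alpha(R^{2\alpha})^{1/2}$.

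For absolute continuity on $[0,R]$, I will mimic Lemma~\ref{lem:absolute-continuity}. The relevant quantity is
\[
\sum_{n\ge1}|b_n|R^{\alpha n}\le Q_0(f)\sum_n R^{\alpha n}\Gamma(\alpha n+1)^{-1/2}<\infty,
\]
again by Stirling. Since each $t^{\alpha n}$ is monotone, the derivative series converges in $L^1$, and $f(0)=b_0$ follows.

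For the equivalence of $(\mathfrak P_k)$ and $(\widetilde{\mathfrak P}_k)$, the bound $\widetilde{\mathfrak P}_k\le C\mathfrak P_k$ holds because $|d_n|^2\le \Gamma(1)\cdot$ (the $n$-th term of $Q_n$) $\le Q_n(f)^2$ for $n<k$, and $Q_k$ appears in $\mathfrak P_k$. Conversely, I bound each $Q_i$ with $i\le k$ by $\widetilde{\mathfrak P}_k$, splitting the sum in $Q_i^2$ at $n=k$. The finitely many terms $i\le n<k$ are bounded by $\max_{m\le k}\Gamma(\alpha m+1)^{-1}\sum_{n<k}|d_n|^2$. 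For the tail $n\ge k$, I need a comparison of weights:
\[
\Gamma(\alpha(n-i)+1)^{-1}\le C_k\,\Gamma(\alpha(n-k)+1)^{-1}\qquad(0\le i\le k).
\]
This holds because $\Gamma(\alpha m+1)$ is bounded below by a positive constant and increases for large $m$. More precisely, the ratio $\Gamma(\alpha(n-k)+1)/\Gamma(\alpha(n-i)+1)$ is bounded in $n$, being eventually at most $1$ by monotonicity of $\Gamma$ on $[2,\infty)$ together with finitely many exceptional $n$. This weight comparison is the step I expect to be the only genuinely delicate one, and it is elementary.

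Agreement with \cite[Definition 10]{MPV} will follow because their topology is defined by the graph norms of Caputo differentiation, namely $\mathfrak P_k$ as stated there. By the computation above, those are exactly our $Q_i$, since $\CD$ acts as the shift on $(d_n)$ by Lemma~\ref{lem:powers}. Completeness follows from the Köthe identification. Alternatively, a Cauchy sequence for every $\mathfrak P_k$ converges coordinatewise, and Fatou's lemma shows that the limit lies in $\F$ with convergence in every seminorm.
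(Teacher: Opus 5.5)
Your proposal is correct. Its analytic core coincides with the paper's proof:
\begin{itemize}
\item the Cauchy--Schwarz estimate for \eqref{eq:frechet-local-bound};
\item the $L^1$ convergence of the derivative series for absolute continuity;
\item the splitting of $Q_i(f)^2$ at $n=k$, with a bounded weight ratio, for the reverse norm estimate;
\item coordinatewise limits plus Fatou for completeness, which you give as an alternative.
\end{itemize}

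The differences lie in packaging and in a few sharper steps.
\begin{itemize}
\item You write $\mathfrak P_k(f)^2=\sum_n\sigma_k(n)|d_n|^2$ and so identify $\F$ with the K\"othe echelon space $\lambda^2(\sigma_k)$. Completeness and separability then follow from general theory. The paper proves both by hand: Fatou for completeness, then density of fractional polynomials with rational coefficients for separability.
\item Your forward bound uses the diagonal term $|d_n|^2\le Q_n(f)^2$ for $n<k$, which gives $\widetilde{\mathfrak P}_k\le\mathfrak P_k$ with constant $1$. The paper compares the initial coefficients with $Q_0$ and obtains the constant $(M_k+1)^{1/2}$.
\item You use monotonicity of $\Gamma$ on $[2,\infty)$ in place of Stirling's formula to bound the ratio $\Gamma(\alpha(n-k)+1)/\Gamma(\alpha(n-i)+1)$.
\item You make explicit the uniqueness of coefficients via the identity theorem. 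This is needed for $Q_k(f)$ to be well defined on functions, and the paper's proof leaves it implicit.
\end{itemize}

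One step should be adjusted. The paper describes the norms of \cite[Definition 10]{MPV} as $Q_k$ combined with positive weights on finitely many initial coefficients, not as the graph norms $\mathfrak P_k$. Your identification of the two via the shift action of $\CD$ therefore rests on a description that does not match the paper's. That shift action on $\F$ is also only justified termwise later in the paper.

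The clean way to close this step uses only what you have already proved. For fixed $k$, any such norm is equivalent to $\widetilde{\mathfrak P}_k$, since only finitely many coordinates are reweighted. You have shown $\widetilde{\mathfrak P}_k$ equivalent to $\mathfrak P_k$, so the topologies agree.
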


\begin{proof}
Recall that
\[
d_n=\Gamma(\alpha n+1)b_n,
\qquad
Q_i(f)^2
=
\sum_{n=i}^{\infty}
\frac{|d_n|^2}{\Gamma(\alpha(n-i)+1)}.
\]

For $k=0$, the two norms coincide:
\[
\widetilde{\mathfrak P}_0(f)
=
\mathfrak P_0(f)
=
Q_0(f).
\]
Now fix $k\ge1$ and set
\[
M_k=\max_{0\le n<k}\Gamma(\alpha n+1).
\]
Then
\[
\begin{aligned}
\sum_{n=0}^{k-1}|d_n|^2
&=
\sum_{n=0}^{k-1}
\Gamma(\alpha n+1)
\frac{|d_n|^2}{\Gamma(\alpha n+1)}\\
&\le
M_kQ_0(f)^2.
\end{aligned}
\]
Consequently,
\begin{equation}
\label{eq:graph-forward-estimate}
\widetilde{\mathfrak P}_k(f)^2
\le
M_kQ_0(f)^2+Q_k(f)^2
\le
(M_k+1)\mathfrak P_k(f)^2.
\end{equation}

For the reverse estimate, let $0\le i\le k$.
Stirling's formula gives
\[
\frac{\Gamma(\alpha(n-k)+1)}
{\Gamma(\alpha(n-i)+1)}
\sim
(\alpha n)^{-\alpha(k-i)}
\qquad(n\to\infty).
\]
Therefore,
\[
M_{i,k}
:=
\sup_{n\ge k}
\frac{\Gamma(\alpha(n-k)+1)}
{\Gamma(\alpha(n-i)+1)}
<\infty.
\]
The initial finite sum satisfies
\[
\sum_{n=i}^{k-1}
\frac{|d_n|^2}{\Gamma(\alpha(n-i)+1)}
\le
C_{i,k}^{(0)}
\sum_{n=0}^{k-1}|d_n|^2,
\]
where one may take
\[
C_{i,k}^{(0)}
=
\max_{0\le r\le k}
\frac1{\Gamma(\alpha r+1)}.
\]
This definition also covers $i=k$, for which the
sum on the left is empty.
For the remaining coefficients,
\[
\begin{aligned}
\sum_{n=k}^{\infty}
\frac{|d_n|^2}{\Gamma(\alpha(n-i)+1)}
&\le
M_{i,k}
\sum_{n=k}^{\infty}
\frac{|d_n|^2}{\Gamma(\alpha(n-k)+1)}\\
&=
M_{i,k}Q_k(f)^2.
\end{aligned}
\]
Thus
\[
Q_i(f)^2
\le
C_{i,k}\widetilde{\mathfrak P}_k(f)^2
\]
for a finite constant $C_{i,k}$.
Summing over $i$ yields
\begin{equation}
\label{eq:graph-reverse-estimate}
\mathfrak P_k(f)^2
\le
\left(\sum_{i=0}^{k}C_{i,k}\right)
\widetilde{\mathfrak P}_k(f)^2.
\end{equation}
Equations \eqref{eq:graph-forward-estimate} and
\eqref{eq:graph-reverse-estimate} prove equivalence
of the two families.

The defining norms in \cite[Definition 10]{MPV}
combine $Q_k$ with positive weights on finitely
many initial coefficients. For each fixed $k$,
these coordinate contributions are equivalent
to those in \eqref{eq:alternative-graph-norms}.
Hence the induced topologies agree.

We next verify that the coefficient conditions
define the asserted functions.
Cauchy--Schwarz gives
\begin{equation}
\label{eq:graph-series-bound}
\begin{aligned}
\sum_{n=0}^{\infty}|b_n|R^{\alpha n}
&=
\sum_{n=0}^{\infty}
\frac{|d_n|}{\sqrt{\Gamma(\alpha n+1)}}
\frac{R^{\alpha n}}{\sqrt{\Gamma(\alpha n+1)}}\\
&\le
Q_0(f)
\left(
\sum_{n=0}^{\infty}
\frac{R^{2\alpha n}}{\Gamma(\alpha n+1)}
\right)^{1/2}\\
&=
Q_0(f)\,E_\alpha(R^{2\alpha})^{1/2}.
\end{aligned}
\end{equation}
The series therefore converges absolutely and
uniformly on bounded subsets of $\Omega$.
Each summand is holomorphic there, proving
holomorphy and \eqref{eq:frechet-local-bound}.

To prove completeness, let $(f_\ell)$ be Cauchy
in every $\mathfrak P_k$ and write
$d_n^{(\ell)}$ for its normalized coefficients.
For every $n$,
\[
|d_n^{(\ell)}-d_n^{(s)}|
\le
\sqrt{\Gamma(\alpha n+1)}\,Q_0(f_\ell-f_s).
\]
Hence $d_n^{(\ell)}\to d_n$ for some $d_n\in\C$.

For fixed $\ell$ and $k$, Fatou's lemma gives
\[
\sum_{n=k}^{\infty}
\frac{|d_n^{(\ell)}-d_n|^2}
{\Gamma(\alpha(n-k)+1)}
\le
\liminf_{s\to\infty}Q_k(f_\ell-f_s)^2.
\]
The Cauchy property shows that the left-hand side
tends to zero as $\ell\to\infty$.
The weighted triangle inequality also gives
\[
\sum_{n=k}^{\infty}
\frac{|d_n|^2}{\Gamma(\alpha(n-k)+1)}
<\infty
\qquad(k\ge0).
\]
Thus the limit coefficients define an element
\[
f(z)=
\sum_{n=0}^{\infty}
\frac{d_n}{\Gamma(\alpha n+1)}z^{\alpha n}
\in\F,
\]
and $f_\ell\to f$ in every $\mathfrak P_k$.
Since these norms form a countable separating
family, $\F$ is a Fr\'echet space.

For each fixed $k$, coefficient truncation satisfies
\[
Q_k(f-P_Nf)^2
=
\sum_{n=\max\{N,k\}}^{\infty}
\frac{|d_n|^2}{\Gamma(\alpha(n-k)+1)}
\longrightarrow0.
\]
Fractional polynomials are therefore dense.
Approximating their coefficients by elements of
$\mathbb Q+i\mathbb Q$ produces a countable dense
subset, proving separability.

Finally, \eqref{eq:graph-series-bound} implies
\[
\begin{aligned}
\sum_{n=1}^{\infty}
\norm{b_n\alpha n\,t^{\alpha n-1}}_{L^1(0,R)}
&=
\sum_{n=1}^{\infty}|b_n|R^{\alpha n}\\
&<\infty.
\end{aligned}
\]
The derivative series converges in $L^1(0,R)$.
Integrating it term by term gives
\[
f(t)
=
b_0+
\int_0^t
\left(
\sum_{n=1}^{\infty}
\alpha n\,b_ns^{\alpha n-1}
\right)\,ds.
\]
Hence the extension with $f(0)=b_0$ belongs
to $AC[0,R]$.
\end{proof}
The graph seminorms also describe differentiation directly.

\begin{proposition}[Caputo action on the comparison space]
\label{prop:frechet-caputo}
For $f\in\F$, the Caputo integral exists absolutely
at every $t>0$. Its continuous extension at zero
defines an element of $\F$ satisfying
\begin{equation}
\label{eq:frechet-caputo-coefficients}
\CD f(z)
=
\sum_{n=0}^{\infty}
\frac{d_{n+1}}{\Gamma(\alpha n+1)}z^{\alpha n}.
\end{equation}
For every $k\ge0$,
\begin{equation}
\label{eq:frechet-shift-identity}
Q_k(\CD f)=Q_{k+1}(f).
\end{equation}
In particular, $\CD:\F\to\F$ is continuous.
\end{proposition}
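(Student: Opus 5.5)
The argument follows the pattern of Proposition~\ref{prop:caputo}, with the $\ell^p$ estimates replaced by the weighted Cauchy--Schwarz bound \eqref{eq:graph-series-bound}. We work on the positive axis with $f(t)=\sum_n b_nt^{\alpha n}$, where $b_n=d_n/\Gamma(\alpha n+1)$.

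\textbf{Step 1: the derivative of $f$.} By Lemma~\ref{lem:graph}, $f\in AC[0,R]$ and
\[
f'=\sum_{n\ge1}\alpha n\,b_n s^{\alpha n-1}
\]
in $L^1(0,R)$. Since $\alpha n/\Gamma(\alpha n+1)=1/\Gamma(\alpha n)$, this is
\[
f'=\sum_{n\ge1} d_n\,\frac{s^{\alpha n-1}}{\Gamma(\alpha n)}.
\]

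\textbf{Step 2: termwise Caputo integral.} For $n\ge1$ the beta integral (as in the proof of Proposition~\ref{prop:caputo}, or Lemma~\ref{lem:powers}) gives
\[
\frac1{\Gamma(1-\alpha)}\int_0^t(t-s)^{-\alpha}\frac{s^{\alpha n-1}}{\Gamma(\alpha n)}\,ds
=\frac{t^{\alpha(n-1)}}{\Gamma(\alpha(n-1)+1)}.
\]
To justify exchanging sum and integral by Tonelli, it suffices that
\[
\sum_{n\ge1}|d_n|\,\frac{t^{\alpha(n-1)}}{\Gamma(\alpha(n-1)+1)}<\infty
\]
for fixed $t>0$. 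This is the main point. By Cauchy--Schwarz, after reindexing $m=n-1$, the sum is bounded by
\[
\Bigl(\sum_{m\ge0}\frac{|d_{m+1}|^2}{\Gamma(\alpha m+1)}\Bigr)^{1/2}
\Bigl(\sum_{m\ge0}\frac{t^{2\alpha m}}{\Gamma(\alpha m+1)}\Bigr)^{1/2}
=Q_1(f)\,E_\alpha(t^{2\alpha})^{1/2}.
\]
The right-hand side is finite because $f\in\F$ and $E_\alpha$ is entire. So the Caputo integral converges absolutely and equals
\[
\sum_{m\ge0}d_{m+1}\frac{t^{\alpha m}}{\Gamma(\alpha m+1)}
\]
for every $t>0$.

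\textbf{Step 3: the series lies in $\F$, and \eqref{eq:frechet-shift-identity}.} Let $g(z)=\sum_m b'_m z^{\alpha m}$ with $b'_m=d_{m+1}/\Gamma(\alpha m+1)$. Its normalized coefficients are $d'_m=\Gamma(\alpha m+1)b'_m=d_{m+1}$. Substituting into \eqref{eq:normalized-graph-seminorms},
\[
Q_k(g)^2=\sum_{m\ge k}\frac{|d_{m+1}|^2}{\Gamma(\alpha(m-k)+1)}
=\sum_{n\ge k+1}\frac{|d_n|^2}{\Gamma(\alpha(n-(k+1))+1)}
=Q_{k+1}(f)^2.
\]
Since every $Q_{k+1}(f)$ is finite, $g\in\F$ and \eqref{eq:frechet-shift-identity} holds.

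\textbf{Step 4: identification at zero, and continuity.} By \eqref{eq:frechet-local-bound} applied to $g$, the series for $g$ converges uniformly on $[0,R]$, so $g$ is continuous there. Hence the continuous extension of the Caputo integral to $t=0$ is $g(0)=d_1$, and $g$ is the holomorphic function on $\Omega$ given by \eqref{eq:frechet-caputo-coefficients}; this defines $\CD f$.

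Continuity of $\CD$ follows from the seminorms:
\[
\mathfrak P_k(\CD f)^2=\sum_{i\le k}Q_{i+1}(f)^2\le\mathfrak P_{k+1}(f)^2 .
\]

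The main obstacle is the domination in Step 2. It is resolved by pairing the $Q_1$ weights with the Mittag--Leffler factor $E_\alpha(t^{2\alpha})$, in the same way as \eqref{eq:graph-series-bound}.
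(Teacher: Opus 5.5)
Your proposal is correct and follows essentially the same route as the paper. Both arguments integrate the derivative series termwise, justified by Tonelli with the Cauchy--Schwarz bound $Q_1(f)\,E_\alpha(t^{2\alpha})^{1/2}$; both identify the normalized coefficients of $\CD f$ as $(d_{n+1})$ to get $Q_k(\CD f)=Q_{k+1}(f)$, and both deduce continuity from $\mathfrak P_k(\CD f)\le\mathfrak P_{k+1}(f)$. Your extra details, namely $\alpha n\,b_n=d_n/\Gamma(\alpha n)$ and uniform convergence on $[0,R]$ giving the endpoint value $d_1$, only make explicit what the paper states briefly.
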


\begin{proof}
The absolute continuity established above gives
\[
f'(s)=
\sum_{n=1}^{\infty}\alpha n\,b_ns^{\alpha n-1}
\quad\text{almost everywhere}.
\]
The power formula and Cauchy--Schwarz yield
\[
\begin{aligned}
&\frac1{\Gamma(1-\alpha)}
\sum_{n=1}^{\infty}|\alpha n b_n|
\int_0^t(t-s)^{-\alpha}s^{\alpha n-1}\,ds\\
&\qquad=
\sum_{j=0}^{\infty}
\frac{|d_{j+1}|t^{\alpha j}}
{\Gamma(\alpha j+1)}\\
&\qquad\le
Q_1(f)\,E_\alpha(t^{2\alpha})^{1/2}
<\infty.
\end{aligned}
\]
Thus integration term by term is justified and
gives \eqref{eq:frechet-caputo-coefficients}
on the positive axis.

The shifted series converges compactly on $\Omega$
and continuously at zero.
Its normalized coefficients are $(d_{n+1})_{n\ge0}$,
so
\[
\begin{aligned}
Q_k(\CD f)^2
&=
\sum_{j=0}^{\infty}
\frac{|d_{j+k+1}|^2}{\Gamma(\alpha j+1)}\\
&=
Q_{k+1}(f)^2.
\end{aligned}
\]
Consequently,
\[
\mathfrak P_k(\CD f)
\le
\mathfrak P_{k+1}(f),
\]
which proves continuity.
\end{proof}

The Banach realizations embed compactly into this
Fr\'echet space.

\begin{proposition}[Compact dense embedding]
\label{prop:embedding}
Holomorphic extension defines a continuous injective map
\[
\iota_{\rho,p}:\X\longrightarrow\F
\]
with dense proper range. It maps bounded subsets of $\X$
into relatively compact subsets of $\F$ and satisfies
\begin{equation}
\label{eq:embedding-bound}
Q_k(\iota_{\rho,p}f)
\le
\rho^kE_\alpha(\rho^2)^{1/2}\norm{f}_{\X}.
\end{equation}
For $N\ge k$,
\begin{equation}
\label{eq:embedding-tail}
\sup_{\norm{f}_{\X}\le1}
Q_k\bigl(\iota_{\rho,p}(I-P_N)f\bigr)
\le
\rho^k
\left(
\sum_{j=N-k}^{\infty}
\frac{\rho^{2j}}{\Gamma(\alpha j+1)}
\right)^{1/2}.
\end{equation}
Moreover,
\begin{equation}
\label{eq:embedding-intertwining}
\iota_{\rho,p}\CD
=
\CD\iota_{\rho,p},
\end{equation}
where each Caputo operator acts on its corresponding
space.
\end{proposition}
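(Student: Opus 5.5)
Everything reduces to coefficients. For $f=Ja$ we have
\[
f(t)=\sum_n a_n\rho^n t^{\alpha n}/\Gamma(\alpha n+1),
\]
so the holomorphic extension is $\sum b_nz^{\alpha n}$ with $b_n=a_n\rho^n/\Gamma(\alpha n+1)$. The normalized coefficients are therefore $d_n=\rho^na_n$, and \eqref{eq:normalized-graph-seminorms} gives
\[
Q_k(\iota f)^2=\sum_{n\ge k}\frac{\rho^{2n}|a_n|^2}{\Gamma(\alpha(n-k)+1)}
=\rho^{2k}\sum_{j\ge0}\frac{\rho^{2j}|a_{j+k}|^2}{\Gamma(\alpha j+1)}.
\]
Since $|a_{j+k}|\le\norm{a}_\infty\le\norm{f}_{\X}$, this is at most $\rho^{2k}E_\alpha(\rho^2)\norm{f}_{\X}^2$, which is \eqref{eq:embedding-bound}. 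Two further points follow at once:
\begin{itemize}
\item \emph{Well-definedness.} Finiteness of every $Q_k$ shows that $\iota f\in\F$.
\item \emph{Continuity and agreement on the axis.} Each $\mathfrak P_k$ is bounded by a multiple of the $\X$-norm, so $\iota$ is continuous. On $(0,\infty)$ the extension agrees with $f$ by the uniform convergence in Lemma~\ref{lem:space}.
\end{itemize}
Injectivity follows because $\iota f=0$ forces all $b_n=0$, hence $a=0$. Alternatively, restrict to the positive axis and use the uniqueness in Lemma~\ref{lem:space}.

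For the tail estimate \eqref{eq:embedding-tail}, apply the same computation to $(I-P_N)f$, whose coefficients vanish below $N$. Only the indices $j\ge N-k$ survive, and bounding $|a_{j+k}|\le1$ gives the stated sum. That sum is the tail of the convergent series for $E_\alpha(\rho^2)$, so it tends to $0$ as $N\to\infty$. Hence $\iota(I-P_N)$ maps the unit ball into sets that are uniformly $Q_k$-small, uniformly in $k\le K$ for any fixed $K$. Meanwhile $\iota P_N$ has finite rank and is bounded, so $\iota P_N(\text{ball})$ is relatively compact.

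This gives relative compactness of $\iota(\text{ball})$ in the metrizable space $\F$: for each continuous seminorm $\mathfrak P_K$ and each $\varepsilon>0$, the set has a finite $\varepsilon$-net. Together with completeness of $\F$ (Lemma~\ref{lem:graph}), this makes it precompact. The one point needing care is that the finite nets must be taken with respect to the translation-invariant metric built from all the $\mathfrak P_K$. This is standard, since a set in a Fréchet space is precompact iff it is precompact for every seminorm.

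The intertwining \eqref{eq:embedding-intertwining} is a coefficient check. By Proposition~\ref{prop:caputo}, $\CD f$ has $\X$-coefficients $\rho a_{n+1}$, so its normalized $\F$-coefficients are $\rho^n\cdot\rho a_{n+1}=d_{n+1}$. This matches \eqref{eq:frechet-caputo-coefficients} applied to $\iota f$. Both Caputo operators are also the same absolutely convergent integral on the positive axis, so the two sides agree there and hence as holomorphic functions.

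Density of the range holds because $\Pol$ is contained in the range and fractional polynomials are dense in $\F$, as shown in the proof of Lemma~\ref{lem:graph}. For properness we need an element of $\F$ whose $d_n$ satisfy $\sum_n|d_n|^2/\Gamma(\alpha(n-k)+1)<\infty$ for all $k$ while $d_n\rho^{-n}\notin\ell^p$. The step I expect to need the most care is choosing this witness; two candidates work:
\begin{itemize}
\item Take $d_n=\mu^n$ with $|\mu|>\rho$. Then $Q_k^2=|\mu|^{2k}E_\alpha(|\mu|^2)<\infty$ for every $k$, but $(\mu/\rho)^n\notin\ell^p$. Concretely this is $E_\alpha(\mu z^\alpha)$, which mirrors the properness argument in Proposition~\ref{prop:scale}.
\item Alternatively, take $d_n=\Gamma(\alpha n+1)^{1/4}$. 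Here Stirling's formula is needed to check the $Q_k$ series converges, so the first choice is the simpler one.
\end{itemize}
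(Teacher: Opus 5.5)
Your proposal is correct and follows the paper's proof essentially step by step. The shared steps are:
\begin{itemize}
\item the identification $d_n=\rho^na_n$ together with the bound $\norm{a}_\infty\le\norm{a}_p$;
\item tail truncation, finite-rank approximation by $\iota P_N$ and completeness of $\F$, which give relative compactness;
\item density of the range via fractional polynomials;
\item the Mittag--Leffler witness $E_\alpha(\mu z^\alpha)$ for properness (the paper allows $|\mu|\ge\rho$);
\item matching the normalized coefficients $d_{n+1}$ for the intertwining.
\end{itemize}
Your second witness $d_n=\Gamma(\alpha n+1)^{1/4}$ is also valid, but it is not needed.
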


\begin{proof}
For $f=J_{\alpha,\rho}a$,
\[
b_n=
\frac{a_n\rho^n}{\Gamma(\alpha n+1)},
\qquad
d_n=\rho^na_n.
\]
Therefore,
\[
\begin{aligned}
Q_k(\iota_{\rho,p}f)^2
&=
\rho^{2k}
\sum_{j=0}^{\infty}
\frac{\rho^{2j}|a_{j+k}|^2}
{\Gamma(\alpha j+1)}\\
&\le
\rho^{2k}E_\alpha(\rho^2)\norm{a}_\infty^2\\
&\le
\rho^{2k}E_\alpha(\rho^2)\norm{f}_{\X}^2.
\end{aligned}
\]
This proves \eqref{eq:embedding-bound}.
Restricting the sum to $j+k\ge N$ gives
\eqref{eq:embedding-tail}.

For each fixed $k$, the right-hand side of
\eqref{eq:embedding-tail} tends to zero.
Thus the finite-rank maps $\iota_{\rho,p}P_N$
approximate $\iota_{\rho,p}$ uniformly on the unit
ball in every graph norm.
Their images of the unit ball are bounded subsets
of finite-dimensional spaces.
Hence $\iota_{\rho,p}$ maps the unit ball into
a totally bounded subset of $\F$.
Completeness of $\F$ gives relative compactness.

Coefficient uniqueness proves injectivity.
Fractional polynomials lie in the range and are
dense in $\F$, proving density.

For $|\mu|\ge\rho$, the function
\[
v_\mu(z)=E_\alpha(\mu z^\alpha)
\]
satisfies
\[
Q_k(v_\mu)^2
=
|\mu|^{2k}E_\alpha(|\mu|^2)
<\infty.
\]
However, its coefficient sequence in $\X$ is
$((\mu/\rho)^n)_{n\ge0}$, which does not belong
to $\ell^p$. The range is therefore proper.

Finally, the normalized coefficients of $\CD f$
in both realizations are
\[
d_{n+1}=\rho^{n+1}a_{n+1}.
\]
This proves \eqref{eq:embedding-intertwining}.
\end{proof}

The dense embedding transfers the dynamics once the
radius exceeds the Banach threshold.

\begin{corollary}[Transfer to the Fr\'echet realization]
\label{cor:transfer}
For $0<\alpha<1$ and $\lambda\ne0$,
$\lambda\CD$ is mixing and Devaney chaotic on $\F$.
For every finite sequence
\[
1\le r_1<\cdots<r_N,
\]
the tuple
\[
\bigl((\lambda\CD)^{r_1},\ldots,
      (\lambda\CD)^{r_N}\bigr)
\]
is disjoint mixing on $\F$.
\end{corollary}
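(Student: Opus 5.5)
The strategy is to transfer the dynamics from a Banach realization to $\F$ through the embedding of Proposition~\ref{prop:embedding}, using the quasi-conjugacy principle: if $\iota:Y\to Z$ is continuous with dense range and $\iota T_Y=T_Z\iota$, then mixing, dense periodic points and disjoint mixing pass from $T_Y$ to $T_Z$.

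Given $\lambda\ne0$, first choose $\rho>0$ with $|\lambda|\rho>1$, for instance $\rho=2/|\lambda|$, and fix $p$ (say $p=2$). Theorem~\ref{thm:classification} then shows that $\lambda\CD$ is mixing and Devaney chaotic on $\X$. Theorem~\ref{thm:disjoint} shows that the tuple of powers $(A^{r_1},\ldots,A^{r_N})$ is disjoint mixing there. The intertwining relation \eqref{eq:embedding-intertwining} multiplies through by $\lambda$ and iterates to
\[
\iota_{\rho,p}(\lambda\CD)^n=(\lambda\CD)^n\iota_{\rho,p}\qquad(n\ge0).
\]

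Mixing on $\F$ follows directly. Let $U,V\subset\F$ be nonempty and open. Continuity of $\iota=\iota_{\rho,p}$ makes $\iota^{-1}(U)$ and $\iota^{-1}(V)$ open, and density of the range makes them nonempty. Mixing on $\X$ gives $x\in\iota^{-1}(U)$ with $(\lambda\CD)^nx\in\iota^{-1}(V)$ for all $n\ge n_0$. Then $\iota x\in U$ and $(\lambda\CD)^n\iota x=\iota(\lambda\CD)^nx\in V$.

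The same argument, applied with $N+1$ open sets $U_0,\ldots,U_N$, gives disjoint mixing on $\F$. One uses the vectors $z$ with
\[
z\in\iota^{-1}(U_0),\qquad (\lambda\CD)^{r_in}z\in\iota^{-1}(U_i)\quad(1\le i\le N).
\]

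For periodic points, $\iota$ maps $\ker((\lambda\CD)^m-I)$ in $\X$ into the corresponding kernel in $\F$. So $\iota(\Per)\subset\Per$ on $\F$, and $\iota(\Per)$ is dense in $\F$, since $\Per$ is dense in $\X$ and $\iota$ is continuous with dense range. Mixing gives transitivity, hence hypercyclicity on the separable Fréchet space $\F$ by Birkhoff's theorem, which holds in Fréchet spaces. The Banks-type characterization in Proposition~\ref{prop:chaos-characterization} yields Devaney chaos; if one insists on sensitivity with respect to an invariant metric, it is automatic for chaotic operators on Fréchet spaces by the standard result \cite{GEP}.

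The main point needing care is not the transfer itself but its hypotheses. Proposition~\ref{prop:embedding} holds for every $\rho>0$, so any supercritical $\rho$ is admissible, and the intertwining must hold on the nose, which \eqref{eq:embedding-intertwining} supplies. I would also remark that Lemma~\ref{lem:graph} supplies separability and completeness of $\F$, which are needed for Birkhoff's theorem there.
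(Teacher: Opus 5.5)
Your proposal is correct and follows essentially the same route as the paper. You choose $\rho>|\lambda|^{-1}$, pull back open subsets of $\F$ through the continuous dense-range map $\iota_{\rho,p}$, and use \eqref{eq:embedding-intertwining} to transfer mixing, disjoint mixing and dense periodic points; sensitivity then comes from the Banks et al.\ result on the metrizable space $\F$. The only cosmetic difference is that you pass through hypercyclicity and Proposition~\ref{prop:chaos-characterization}, which the paper states for Banach spaces, as you yourself note. The paper instead applies \cite{Banks} directly to transitivity together with dense periodic points.
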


\begin{proof}
Choose $\rho>|\lambda|^{-1}$ and fix
$1\le p<\infty$.
Then $|\lambda|\rho>1$, so the Banach realization
is mixing, has dense periodic points, and its
distinct positive powers are disjoint mixing.

Let $V_0,\ldots,V_N$ be nonempty open subsets of
$\F$. Since $\iota_{\rho,p}$ is continuous with
dense range, the sets
\[
U_j=\iota_{\rho,p}^{-1}(V_j)
\]
are nonempty and open in $\X$.
Applying the Banach-space mixing properties to
these preimages and using
\eqref{eq:embedding-intertwining}
gives mixing and disjoint mixing on $\F$.

Each nonempty open preimage also contains a periodic
point. Its image is periodic, so periodic points
are dense in $\F$.
Since $\F$ is metrizable and has no isolated points,
transitivity and dense periodic points imply
sensitivity by \cite{Banks}.
Thus $\lambda\CD$ is Devaney chaotic.
\end{proof}

For $0<\alpha<1$, this recovers the corresponding
Fr\'echet-space conclusions of \cite{MPV} through
bounded Banach realizations. The restriction
$|\lambda|\rho>1$ remains essential on each fixed
space $X_{\alpha,\rho}^{p}$.
% ============================================================
\section{Examples and quantitative consequences}
\label{sec:examples}
% ============================================================

The preceding formulas give explicit perturbations,
approximation times and periodic trajectories.
The examples below illustrate these calculations.

\begin{example}[Explicit costs in an $\ell^1$ realization]
\label{ex:l1-costs}
Fix $0<\alpha<1$, $p=1$ and $\rho=2$.
Then $\CD$ is mixing and Devaney chaotic on
$X_{\alpha,2}^{1}$.

Let
\[
a=\sum_{j=0}^{L}d_je_j,
\qquad d_L\ne0.
\]
For $m>L$, extend $d$ by zeros to length $m$.
The periodic approximation satisfies
\begin{equation}
\label{eq:example-periodic-error}
\norm{\Pi_md-a}_{X_{\alpha,2}^{1}}
=
\frac{\norm{a}_{X_{\alpha,2}^{1}}}{2^m-1}.
\end{equation}
Thus, for any prescribed relative tolerance
$\varepsilon>0$, the choice
\begin{equation}
\label{eq:example-period-choice}
m\ge
\max\left\{
2L+1,
\left\lceil
\log_2(1+\varepsilon^{-1})
\right\rceil
\right\}
\end{equation}
gives a vector of minimal period $m$ such that
\[
\norm{\Pi_md-a}_{X_{\alpha,2}^{1}}
\le
\varepsilon\norm{a}_{X_{\alpha,2}^{1}}.
\]

For exact arrival at $b\in X_{\alpha,2}^{1}$,
take $n>L$. Since $\CD^na=0$,
Theorem~\ref{thm:optimal} gives
\begin{equation}
\label{eq:example-exact-cost}
\min\left\{
\norm{h}_{X_{\alpha,2}^{1}}:
\CD^n(a+h)=b
\right\}
=
2^{-n}\norm{b}_{X_{\alpha,2}^{1}}.
\end{equation}
The unique minimizer is $h=S^nb$.

For comparison, the balanced approximation cost is
\begin{equation}
\label{eq:example-balanced-cost}
\Delta_{\CD,n}(a,b)
=
\frac{\norm{b}_{X_{\alpha,2}^{1}}}{1+2^n}.
\end{equation}

For instance, let $a=e_0+e_1$, $b=e_0$ and $n=2$.
The exact-arrival perturbation is
\[
h_{\mathrm{ex}}=\frac14e_2,
\qquad
\CD^2(a+h_{\mathrm{ex}})=e_0,
\qquad
\norm{h_{\mathrm{ex}}}_{X_{\alpha,2}^{1}}=\frac14.
\]
The balanced minimizer is
\[
h_{\mathrm{bal}}=\frac15e_2,
\]
for which
\[
\norm{h_{\mathrm{bal}}}_{X_{\alpha,2}^{1}}
=
\norm{\CD^2(a+h_{\mathrm{bal}})-e_0}_{X_{\alpha,2}^{1}}
=
\frac15.
\]
\end{example}

Geometric coefficient tails give explicit bounds
on the required iteration time.

\begin{example}[An explicit approximation time]
\label{ex:approximation-time}
Assume $|c|>1$ and choose $s\in\C$ with
$0<|s|<1$. Set
\[
f(t)
=
E_\alpha(\rho s t^\alpha)
=
\sum_{j=0}^{\infty}s^je_j(t).
\]
Then
\begin{equation}
\label{eq:geometric-tail}
\norm{f}_{\X}
=
(1-|s|^p)^{-1/p},
\qquad
\norm{(I-P_N)f}_{\X}
=
|s|^N\norm{f}_{\X}.
\end{equation}

Let $g\in\X\setminus\{0\}$ and $\varepsilon>0$.
Define
\[
M_\varepsilon
=
\max\left\{
\frac{\log(2\norm{f}_{\X}/\varepsilon)}
{\log(1/|s|)},
\frac{\log(2\norm{g}_{\X}/\varepsilon)}
{\log|c|}
\right\},
\]
and choose
\begin{equation}
\label{eq:explicit-arrival-time}
N=
\max\left\{
1,\,
1+\lfloor M_\varepsilon\rfloor
\right\}.
\end{equation}
Then
\[
|s|^N\norm{f}_{\X}<\frac{\varepsilon}{2},
\qquad
|c|^{-N}\norm{g}_{\X}<\frac{\varepsilon}{2}.
\]
Consequently,
\[
z_N=P_Nf+S_\lambda^Ng
\]
satisfies
\begin{equation}
\label{eq:explicit-arrival-estimate}
\norm{z_N-f}_{\X}<\varepsilon,
\qquad
A^Nz_N=g.
\end{equation}

The balanced cost is also explicit. Since
\[
Af=cs\,f,
\]
Theorem~\ref{thm:local-cost} gives
\begin{equation}
\label{eq:geometric-balanced-cost}
\Delta_{A,N}(f,g)
=
\frac{\norm{g-(cs)^Nf}_{\X}}{1+|c|^N}.
\end{equation}
Thus the two approximation problems are controlled
by the same geometric coefficient decay.
\end{example}

The periodic formulas also yield familiar functions
with nontrivial fractional trajectories.

\begin{example}[Exponential cycles]
\label{ex:exponential-cycles}
Let $m\ge2$, $\alpha=1/m$, $\lambda=1$ and $\rho>1$.
The exponential function has the coefficient expansion
\[
e^t
=
\sum_{k=0}^{\infty}\frac{t^k}{k!}
=
\sum_{k=0}^{\infty}\rho^{-mk}e_{mk}(t).
\]
Hence
\begin{equation}
\label{eq:exponential-norm}
e^t\in X_{1/m,\rho}^{p},
\qquad
\norm{e^t}_{X_{1/m,\rho}^{p}}
=
(1-\rho^{-mp})^{-1/p}.
\end{equation}
Proposition~\ref{prop:periodforms} gives
\[
\mathcal C_{1/m}^{\,m}e^t=e^t,
\]
with minimal period $m$.

For $m=2$, write
\[
v(t)=\mathcal C_{1/2}e^t.
\]
Directly from the Caputo integral,
\[
\begin{aligned}
v(t)
&=
\frac1{\sqrt{\pi}}
\int_0^t(t-s)^{-1/2}e^s\,ds\\
&=
\frac{2e^t}{\sqrt{\pi}}
\int_0^{\sqrt t}e^{-u^2}\,du\\
&=
e^t\operatorname{erf}(\sqrt t),
\end{aligned}
\]
where
\[
\operatorname{erf}(x)
=
\frac2{\sqrt{\pi}}\int_0^x e^{-u^2}\,du.
\]
Its fractional power expansion is
\[
v(t)
=
\sum_{k=0}^{\infty}
\frac{t^{k+1/2}}{\Gamma(k+3/2)}
=
\sum_{k=0}^{\infty}
\rho^{-(2k+1)}e_{2k+1}(t).
\]
Therefore,
\[
\mathcal C_{1/2}v=e^t,
\qquad
\norm{v}_{X_{1/2,\rho}^{p}}
=
\rho^{-1}(1-\rho^{-2p})^{-1/p}.
\]
Since $v(0)=0$ and $e^0=1$, the two points are distinct:
\[
e^t
\ \xmapsto{\ \mathcal C_{1/2}\ }\ 
e^t\operatorname{erf}(\sqrt t)
\ \xmapsto{\ \mathcal C_{1/2}\ }\ 
e^t.
\]
\end{example}

Finally, the same differential expression can exhibit
different dynamics on nested Banach spaces.

\begin{example}[A dense stable subspace of a chaotic realization]
\label{ex:ambient-topology}
Fix $0<\alpha<1$ and $1\le p<\infty$.
Let $T_1$ and $T_2$ denote the unscaled Caputo
realizations on $X_{\alpha,1}^{p}$ and
$X_{\alpha,2}^{p}$, respectively.

The inclusion
\[
\iota:X_{\alpha,1}^{p}
\hookrightarrow X_{\alpha,2}^{p}
\]
is compact, has dense proper range, and satisfies
\[
\norm{\iota}=1,
\qquad
T_2\iota=\iota T_1.
\]
For every $f\in X_{\alpha,1}^{p}$,
\[
\norm{T_1^nf}_{X_{\alpha,1}^{p}}
\longrightarrow0.
\]
Consequently,
\[
\norm{T_2^n\iota f}_{X_{\alpha,2}^{p}}
=
\norm{\iota T_1^nf}_{X_{\alpha,2}^{p}}
\le
\norm{T_1^nf}_{X_{\alpha,1}^{p}}
\longrightarrow0.
\]

Although $T_2$ is mixing and Devaney chaotic,
\begin{equation}
\label{eq:stable-subspace-no-hypercyclicity}
\iota(X_{\alpha,1}^{p})\cap HC(T_2)
=
\varnothing.
\end{equation}
Likewise,
\begin{equation}
\label{eq:stable-subspace-periodic}
\iota(X_{\alpha,1}^{p})\cap\Per(T_2)
=
\{0\}.
\end{equation}
Indeed, convergence to zero excludes a dense orbit,
and a periodic orbit converging to zero is identically
zero.

Thus a compactly embedded dense invariant subspace
can consist entirely of stable vectors, while the
ambient realization has dense hypercyclic and
periodic sets.
\end{example}

% ============================================================
\section{Conclusion}\label{sec:conclusion}
% ============================================================

The coefficient realization separates the analytic definition
of Caputo differentiation from its shift dynamics.
The exact local-defect formula quantifies the perturbations
required for approximate and exact arrival, while periodic
blocks provide explicit approximation rates and prescribed
large minimal periods.

On each fixed space $X_{\alpha,\rho}^{p}$, the condition
$|\lambda|\rho>1$ characterizes hypercyclicity, mixing
and Devaney chaos, and determines the same threshold
for disjoint mixing of distinct positive powers.
Compact dense embeddings connect these bounded realizations
to the Mittag--Leffler--Caputo Fr\'echet space.

The local-defect characterization remains a norm formulation
of Birkhoff transitivity. Its role here is quantitative:
it converts coefficient estimates into explicit approximation
costs. Extending these calculations to nonconstant coefficient
weights and perturbed fractional operators is a natural
direction for further study.

% ============================================================
% Bibliography
% ============================================================


\begingroup
\small

\begin{thebibliography}{99}

\bibitem{AN2026}
N. Adhikary, A. Nagar,
On certain forms of transitivities for linear operators,
arXiv:2603.25073v1, 2026.
\url{https://doi.org/10.48550/arXiv.2603.25073}.

\bibitem{Banks}
J. Banks, J. Brooks, G. Cairns, G. Davis, P. Stacey,
On Devaney's definition of chaos,
Amer. Math. Monthly 99 (1992) 332--334.
\url{https://doi.org/10.1080/00029890.1992.11995856}.

\bibitem{BM}
F. Bayart, \'E. Matheron,
Dynamics of Linear Operators,
Cambridge Tracts in Mathematics, vol.~179,
Cambridge University Press, Cambridge, 2009.
\url{https://doi.org/10.1017/CBO9780511581113}.

\bibitem{BernardesPeris2024}
N.C. Bernardes Jr., A. Peris,
On shadowing and chain recurrence in linear dynamics,
Adv. Math. 441 (2024) 109539.
\url{https://doi.org/10.1016/j.aim.2024.109539}.

\bibitem{BMPS}
J. B\`es, \"O. Martin, A. Peris, S. Shkarin,
Disjoint mixing operators,
J. Funct. Anal. 263 (2012) 1283--1322.
\url{https://doi.org/10.1016/j.jfa.2012.05.018}.

\bibitem{BP}
J. B\`es, A. Peris,
Hereditarily hypercyclic operators,
J. Funct. Anal. 167 (1999) 94--112.
\url{https://doi.org/10.1006/jfan.1999.3437}.

\bibitem{BPeris}
J. B\`es, A. Peris,
Disjointness in hypercyclicity,
J. Math. Anal. Appl. 336 (2007) 297--315.
\url{https://doi.org/10.1016/j.jmaa.2007.02.043}.

\bibitem{CEMP2026}
C. Cobollo, R. Ernst, Q. Menet, A. Peris,
Hypercyclicity and Lipschitz-free operators,
arXiv:2609.17874v1, 2026.
\url{https://doi.org/10.48550/arXiv.2609.17874}.

\bibitem{DeLaRosaRead}
M. de la Rosa, C. Read,
A hypercyclic operator whose direct sum
$T\oplus T$ is not hypercyclic,
J. Operator Theory 61 (2009) 369--380.
\url{https://jot.theta.ro/jot/archive/2009-061-002/2009-061-002-010.html}.

\bibitem{GS}
R.M. Gethner, J.H. Shapiro,
Universal vectors for operators on spaces of
holomorphic functions,
Proc. Amer. Math. Soc. 100 (1987) 281--288.
\url{https://doi.org/10.1090/S0002-9939-1987-0884467-4}.

\bibitem{GodefroyShapiro}
G. Godefroy, J.H. Shapiro,
Operators with dense, invariant, cyclic vector manifolds,
J. Funct. Anal. 98 (1991) 229--269.
\url{https://doi.org/10.1016/0022-1236(91)90078-J}.

\bibitem{GG}
D. Gomes, K.-G. Grosse-Erdmann,
Kitai's Criterion for composition operators,
J. Math. Anal. Appl. 547 (2025) 129347.
\url{https://doi.org/10.1016/j.jmaa.2025.129347}.

\bibitem{Gorenflo}
R. Gorenflo, A.A. Kilbas, F. Mainardi, S.V. Rogosin,
Mittag-Leffler Functions, Related Topics and Applications,
second ed., Springer Monographs in Mathematics,
Springer, Berlin--Heidelberg, 2020.
\url{https://doi.org/10.1007/978-3-662-61550-8}.

\bibitem{Grivaux}
S. Grivaux,
Hypercyclic operators, mixing operators,
and the bounded steps problem,
J. Operator Theory 54 (2005) 147--168.
\url{https://jot.theta.ro/jot/archive/2005-054-001/2005-054-001-010.html}.

\bibitem{GEWeightedShifts}
K.-G. Grosse-Erdmann,
Hypercyclic and chaotic weighted shifts,
Studia Math. 139 (2000) 47--68.
\url{https://doi.org/10.4064/sm-139-1-47-68}.

\bibitem{GEP}
K.-G. Grosse-Erdmann, A. Peris Manguillot,
Linear Chaos,
Universitext, Springer, London, 2011.
\url{https://doi.org/10.1007/978-1-4471-2170-1}.

\bibitem{Kitai}
C. Kitai,
Invariant Closed Sets for Linear Operators,
Ph.D. thesis, University of Toronto, Toronto, 1982.

\bibitem{LMP}
C. Lizama, M. Murillo-Arcila, A. Peris,
Nonlocal operators are chaotic,
Chaos 30 (2020) 103126.
\url{https://doi.org/10.1063/5.0018408}.

\bibitem{MalikThome}
S.B. Malik, N. Thome,
On a revisited Moore--Penrose inverse
of a linear operator on Hilbert spaces,
Filomat 31 (2017) 1927--1931.
\url{https://doi.org/10.2298/FIL1707927M}.

\bibitem{MPV}
M. Murillo-Arcila, A. Peris, \'A. Vargas-Moreno,
Dynamics of the Caputo fractional derivative,
Fract. Calc. Appl. Anal. 28 (2025) 1717--1731.
\url{https://doi.org/10.1007/s13540-025-00430-4}.

\bibitem{Podlubny}
I. Podlubny,
Fractional Differential Equations,
Mathematics in Science and Engineering, vol.~198,
Academic Press, San Diego, 1999.

\bibitem{Rolewicz}
S. Rolewicz,
On orbits of elements,
Studia Math. 32 (1969) 17--22.
\url{https://doi.org/10.4064/sm-32-1-17-22}.

\bibitem{RD}
J.A. Rosenfeld, W.E. Dixon,
Approximating the Caputo fractional derivative through
the Mittag-Leffler reproducing kernel Hilbert space
and the kernelized Adams--Bashforth--Moulton method,
SIAM J. Numer. Anal. 55 (2017) 1201--1217.
\url{https://doi.org/10.1137/16M1056894}.

\bibitem{RRD}
J.A. Rosenfeld, B. Russo, W.E. Dixon,
The Mittag Leffler reproducing kernel Hilbert spaces
of entire and analytic functions,
J. Math. Anal. Appl. 463 (2018) 576--592.
\url{https://doi.org/10.1016/j.jmaa.2018.03.036}.

\bibitem{Salas}
H.N. Salas,
Hypercyclic weighted shifts,
Trans. Amer. Math. Soc. 347 (1995) 993--1004.
\url{https://doi.org/10.1090/S0002-9947-1995-1249890-6}.

\bibitem{Samko}
S.G. Samko, A.A. Kilbas, O.I. Marichev,
Fractional Integrals and Derivatives:
Theory and Applications,
Gordon and Breach Science Publishers, Yverdon, 1993.

\bibitem{ShkarinKitai}
S. Shkarin,
The Kitai criterion and backward shifts,
Proc. Amer. Math. Soc. 136 (2008) 1659--1670.

\end{thebibliography}
\endgroup
\end{document}